\documentclass[a4paper,12pt]{amsart}
\usepackage{amsmath,amsfonts,amssymb}
\usepackage{amsthm}
\usepackage{graphics,graphicx,epsf}
\usepackage[usenames]{color}
\usepackage{color}

\usepackage{tikz}

\usepackage{hyperref}
\newtheorem{theorem}{Theorem}[section]
\newtheorem{definition}[theorem]{Definition}
\newtheorem{prop}[theorem]{Proposition}
\newtheorem{lemma}[theorem]{Lemma}
\newtheorem{cor}[theorem]{Corollary}
\newtheorem{rem}[theorem]{Remark}

\newtheorem{hypothesis}[theorem]{Hypothesis}

\renewcommand{\proof}{{\noindent \bf Proof. \ }}

\newcommand{\eproof}{\hfill \mbox{${\square}$}}

\numberwithin{equation}{section}

\title[\hfil Uniform $L^{\infty}$-Boundedness of Global Attractors]
{Uniform $L^{\infty}$-Boundedness of Global Attractors for Reaction-Diffusion Equations with Neumann boundary condition in Uniformly Perturbed Non-Smooth Domains}

\author[A. L. Pereira]
{Ant\^onio L. Pereira}

\address{Ant\^onio L. Pereira \hfill\break
Instituto de Matem\'atica e Estat\'istica,
Universidade de S\~ao Paulo,
S\~ao Paulo, Brazil}
\email{alpereir@ime.usp.br}

\thanks {Partially supported by FAPESP-SP Brazil grant 2020/14075-6.}

 \subjclass[2010]{Primary: 35B41; Secondary: 35K91, 58D25}
\keywords{Global attractor, Jones domains, uniform bounds}

\date{\today}

\begin{document}

\begin{abstract}
We consider a family of semilinear parabolic equations with homogeneous Neumann boundary conditions on a family of varying non-smooth domains $\{\Omega_\mu\}_{\mu \in \Lambda} \subset \mathbb{R}^N $. Assuming only that the domains have uniformly bounded volumes, satisfy a uniform Jones condition, besides the usual requirements for the linear and nonlinear terms,  we establish the well-posedness of the problem in an appropriate scale of fractional Banach spaces and prove the existence of global attractors. Using a Moser-Alikakos bootstrap iteration in conjunction with the uniform Gronwall lemma and the uniform properties of the Jones extension operator, we show that the family of attractors is uniformly bounded in $L^\infty(\Omega_\mu)$. Finally, assuming the volume convergence of the domains, $|\Omega_\mu \triangle \Omega_0| \to 0$, we construct a framework of connecting maps to prove that the family of attractors is upper semicontinuous at $\mu = 0$ in the strong $H^1$ topology.
\end{abstract}

\maketitle

%
%

\section{Introduction}

We consider here a family of semilinear parabolic problems with Neumann  boundary conditions:
\begin{equation} \label{parabolic_problem}
\begin{cases}
u_{t}(x,t) =  \sum_{i,j=1}^{n}\frac{\partial}{\partial x_{i}}\left(a_{ij}(x)\frac{\partial u}{\partial x_{j}}\right)
 + f(x,u(x,t)), & x\in\Omega_{\mu}, \ t>0, \\
\frac{\partial u}{\partial\nu_{A}}  = 0, & x\in\partial\Omega_{\mu}, \ t>0, \\
u(x,0) = u_{0}(x), & x\in\Omega_{\epsilon},
\end{cases}
\end{equation}
where the family of open bounded domains $\{\Omega_{\mu}\}_{\mu\in \Lambda]} \subset \mathbb{R}^{n}$ and the differential operators 
$$
A_{\mu}u = - \sum_{i,j=1}^{n}\frac{\partial}{\partial x_{i}}\left(a_{ij}(x)\frac{\partial u}{\partial x_{j}}\right)
$$
satisfy rather general geometric and ellipticity conditions, to be made precise below. 

 Our main goal is to show  that the  semigroup generated by \eqref{parabolic_problem} is well posed,  dissipative,  the  family of  global attractors  are uniformly bounded in $L^{\infty}( \Omega_{\mu})$ and upper semicontinuous at $\mu=0$. 
 One could also allow the differential operator and the nonlinearity to depend on the parameter $\mu$ but, since  our primary focus  is  on the   challenges arising from the variation of the spatial domain,    we have chosen to omit this dependence in favor of simplicity and readability, as incorporating it introduces no new conceptual difficulties, requiring only standard uniform convergence hypotheses and heavier notation.

The study of semilinear parabolic equations and their long-time behavior is a central topic in the theory of infinite-dimensional dynamical systems. For a wide class of dissipative reaction-diffusion equations, the asymptotic dynamics is  captured by a global attractor. An important  problem in this context is to establish whether these attractors are uniformly bounded in $L^\infty$, ensuring that the physical or biological quantities modeled by the equations do not blow up and allowing for   significant simplification of  the analysis.

The well-posedness of equations similar to the above and the existence of their global attractors under varying  boundary conditions have a rich history in the literature (see, for instance, Carvalho et al. \cite{CarvalhoOlivaPereiraRodriguez1997} and Pereira and Oliva \cite{PereiraOliva2002}). Classical works by Hale and Raugel \cite{HaleRaugel1992}, Arrieta and Carvalho \cite{ArrietaCarvalho2004}, and Daners \cite{Daners2008} have established a solid foundation for understanding how variations in the spatial domain affect the asymptotic dynamics. However, proving uniform $L^\infty$ bounds becomes more challenging when the underlying spatial domains are subject to severe geometric perturbations. The vast majority of existing results rely on the assumption that the family of perturbed domains possesses smooth, or at least uniformly Lipschitz, boundaries satisfying standard cone properties (as addressed in  Arag\~ao, Arrieta and Bruschi \cite{AragaoArrieta2025}, Pereira and Pereira \cite{PereiraPereira2007}, and Barbosa, Pereira, and Pereira \cite{BarbosaPereiraPereira2016}).

One  intention of this paper is to significantly weaken this smoothness assumption,  extending the uniform boundedness of global attractors to a much broader class of non-smooth, highly oscillating, and even fractal domains especially  in the case of homogeneous boundary conditions. The challenge of highly oscillatory boundaries has been recently investigated in the context of continuity of attractors \cite{Pereira_arxiv2024}, as well as problems with terms concentrating on the boundary \cite{AragaoPereiraPereira2014}. However,  when the boundary $\partial\Omega_\mu$ becomes highly irregular as $\mu \to 0$, classical functional tools begin to collapse. Standard Sobolev embeddings may lose their uniformity, and the constants in the trace operator can diverge if the $(n-1)$-dimensional surface measure of the boundary blows up. 

To conduct a rigorous functional analysis independent of these geometric degenerations, we abandon classical smoothness requirements and operate within the topological framework of uniform domains, introduced by P. W. Jones \cite{Jones1981}. The uniform Jones condition prevents the domain from developing topological ``bottlenecks'' or degenerating cusps.

It is worth contrasting our geometric framework with recent results dealing with domains that develop true cusps or degenerate into lower-dimensional spaces, such as thin domains. As shown in classical works by Maz'ya and Poborchi \cite{MazyaPoborchi}, and more recently applied to dynamical systems by Tavares-Lima, Lorenzi, and Pereira \cite{Tavares2026}, the presence of true cusps (which inherently violate the Jones condition) leads to a deterioration of the optimal critical Sobolev exponent. In such scenarios, establishing uniform estimates generally requires restricting the growth of the nonlinearity to a suboptimal range and/or relying heavily on the convergence to a one-dimensional limit equation. By contrast, the uniform Jones condition adopted in our work strictly precludes such topological pinching. 
As a result, our framework preserves the optimal critical Sobolev exponent uniformly across the entire family $\{\Omega_\mu\}$. 
Our strategy is divided in two steps: we first consider the problem in the base space $H^{1^{\prime}}$, to avoid the lack of elliptic regularity in $H^2$ and then use  iteratively bootstrap tools to obtain  higher  regularity. The $L^{\infty}$ bound can then be obtained in the limit by careful checking that the relevant  constants do not blow up when the number of iterations goes to $\infty$.

This method allows us to  apply a  bootstrap iteration in the original $n$-dimensional setting, independent of any dimensional reduction, while simultaneously admitting wildly oscillating and fractal boundaries.

In this work, we focus our analysis on the semilinear parabolic problem under homogeneous Neumann boundary conditions. Under this regime, the boundary integral in the weak formulation cleanly vanishes, allowing the analysis to rely purely on the internal uniform extension properties without imposing strict measure-theoretic constraints on the boundary surface. Using the  uniform Sobolev embeddings, we establish uniform ultracontractivity bounds for the linear semigroup. These bounds are then propagated to the nonlinear problem through a standard Moser-Alikakos bootstrap iteration, yielding uniform $L^\infty$ bounds for the global attractors up to the optimal critical Sobolev growth.

The paper is organized as follows:
\begin{itemize}
    \item \textbf{Section 2} introduces the geometric framework of Jones uniform domains, states the uniform extension theorem, and establishes the functional spaces and fundamental hypotheses.
    \item \textbf{Section 3} reformulates the initial boundary value problem as an abstract Cauchy problem within a  suitable  scale of fractional Banach spaces.
    \item \textbf{Section 4} is devoted to proving the local well-posedness of the problem in the fractional scale using the theory of sectorial operators.
    \item \textbf{Sections 5 and 6} establish the global existence of solutions, the dissipativity of the semigroups, and the existence of the global attractors.
    \item \textbf{Section 7} proves the uniform $L^\infty$-boundedness of the global attractors across the perturbed domains by carefully tracking the constants in the Moser-Alikakos iteration.
    \item \textbf{Section 8} focuses on the robustness of the dynamics. By establishing strong resolvent and semigroup convergences via Mosco's framework and the Trotter-Kato theorem, we prove the upper semicontinuity of the global attractors in the strong $H^1$ topology as $\mu \to 0$.
    \item \textbf{Section 9} presents concluding remarks and outlines future perspectives, including intrinsic topological convergence via the Gromov-Hausdorff distance and the transition to boundary-driven dynamics.
\end{itemize}

\section{Preliminaries and Hypotheses}
\label{sec:preliminaries}

In this section, we introduce the geometric framework and the structural assumptions on the nonlinear parabolic problem. We also recall essential classical results, including uniform Sobolev embeddings and the Uniform Gronwall Lemma, which will be the cornerstones of our energy estimates.

\subsection{Uniform Jones domains}

Intuitively, a Jones domain is one where any two points near each other can be connected by an internal curve that does not deviate too much from the shortest path and avoids coming too close to the boundary (preventing topological ``pinching'' or ``bottlenecks''). It may be extremely rough including, for instance, oscillating boundaries and some fractal structures. This intuitive notion was formalized by P.W. Jones in \cite{Jones1981}.
\begin{definition}[Jones domain] {Let $\delta_0 > 0$ and $\epsilon \ge 1$. A domain $D \subset \mathbb{R}^N $ is called a $(\delta_0, \epsilon)$-Jones  domain if, for any pair of points $x, y \in D$ such that $|x - y| \le \delta_0$, there exists a rectifiable curve $\gamma \subset D$ connecting $x$ and $y$ satisfying:}
\begin{enumerate}
    \item \textit{$\ell(\gamma) \le \epsilon |x - y|$, where $\ell(\gamma)$ denotes the arc length of $\gamma$.}
    \item \textit{$\text{dist}(z, \partial D) \ge \frac{1}{\epsilon} \frac{|x - z||y - z|}{|x - y|}$, for all $z \in \gamma$.}
\end{enumerate}
  We  will simply say $\Omega $ is a Jones domain, if it is  a $(\delta_0, \epsilon)$-Jones  domain, for some  $\delta>0 $ and $\epsilon >1$. 
 \end{definition}

Let $\{\Omega_\mu\}_{\mu \in \Lambda}$ be a family of bounded domains in $\mathbb{R}^N $ ($n \geq 2$).

\begin{definition}[Uniform Jones family ] \label{def:uniform_Jones}
  We say that  $\{\Omega_\mu\}_{\mu \in \Lambda}$  is a  uniform Jones  family of domains  if there exist strictly positive constants $\delta_0$ and $\epsilon$, independent of the perturbation parameter $\mu$, such that $\Omega_\mu$ is a $(\delta_0, \epsilon)$-Jones domain for all $\mu \in (0, \mu_0]$.
  \end{definition}


The fundamental consequence of the uniform Jones condition is the existence of a universal extension operator in Sobolev spaces $W^{k,p}$, independent of $\mu \in \Lambda$.

\begin{theorem}[Uniform Extension Theorem]\label{thm:extension} Suppose  that  $\{\Omega_\mu\}_{\mu \in \Lambda}$  is a  uniform Jones  family of domains. Then 
 there exists, for each $\mu$,  a bounded linear extension operator $E_\mu: W^{k,p}(\Omega_\mu) \to W^{k,p}(\mathbb{R}^n)$ such that $E_\mu u|_{\Omega_\mu} = u$, and its operator norm is bounded by a constant $K > 0$ independent of $\mu$:
\begin{equation}
    \|E_\mu u\|_{W^{k,p}(\mathbb{R}^n)} \leq K \|u\|_{W^{k,p}(\Omega_\mu)}, \quad \forall u \in W^{k,p}(\Omega_\mu).
\end{equation}
\end{theorem}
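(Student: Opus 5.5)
The plan is to follow Jones' original construction \cite{Jones1981} and check that every constant depends only on $(\delta_0,\epsilon)$, $k$, $p$ and $n$, never on the particular domain $\Omega_\mu$. One caveat should be recorded first. The extension operator must carry a norm bound that is uniform in $\mu$ for the full $W^{k,p}$ norm, so the lower-order terms have to be controlled as well. Jones' theorem bounds $\|Eu\|_{W^{k,p}}$ by $C(\delta_0,\epsilon,k,p,n)\|u\|_{W^{k,p}}$ for domains whose radius $\operatorname{rad}(\Omega)=\inf_{x}\sup_{y\in\Omega}|x-y|$ is bounded below. Here I will assume, or deduce, a uniform lower bound on the size of the domains. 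Since the $\Omega_\mu$ are nonempty open $(\delta_0,\epsilon)$-domains, I would either note that the relevant scale is $\min(\delta_0,\operatorname{rad}\Omega_\mu)$ or add this mild hypothesis. In the homogeneous-in-scale formulation the constant depends only on $\delta_0,\epsilon,n,k,p$ and this lower bound.

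\textbf{Step 1: Whitney decompositions and reflection of cubes.} I take the Whitney decompositions $W_1$ of $\Omega_\mu$ and $W_2$ of $\operatorname{int}(\Omega_\mu^c)$. I keep only the exterior cubes $W_3=\{Q\in W_2:\ \ell(Q)\le c\,\delta_0\}$, with $c$ depending on $\epsilon,n$. For each $Q\in W_3$ I choose a reflected cube $Q^*\in W_1$ with $\ell(Q)\le\ell(Q^*)\le 4\ell(Q)$ and $\operatorname{dist}(Q,Q^*)\le C\ell(Q)$. The Jones condition, applied to pairs of points within distance $\delta_0$, gives the existence of $Q^*$ and the key chain lemma. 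For adjacent $Q_1,Q_2\in W_3$ there is a chain of cubes in $W_1$ joining $Q_1^*$ to $Q_2^*$. This chain has length at most $C(\epsilon,n)$ and consists of cubes comparable in size to $\ell(Q_1)$. The constants come from conditions (1) and (2) of the definition alone, and this is where uniformity in $\mu$ enters.

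\textbf{Step 2: Polynomial extension.} I set $Eu=u$ on $\Omega_\mu$. On the exterior set I define $Eu=\sum_{Q\in W_3}\varphi_Q\,P_{Q^*}u$, where $\{\varphi_Q\}$ is a partition of unity subordinate to the dilated cubes, with $|D^\alpha\varphi_Q|\le C\ell(Q)^{-|\alpha|}$, and $P_{Q^*}u$ is the degree-$(k-1)$ polynomial best approximating $u$ on $Q^*$ in the $L^p$ sense. The norm estimate then reduces to three ingredients. The first is the Poincaré-type inequality $\|D^\beta(u-P_{Q^*}u)\|_{L^p(Q^*)}\le C\ell^{k-|\beta|}\|\nabla^k u\|_{L^p(Q^*)}$. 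The second is a comparison of polynomials across the chain from Step 1, controlled by the sum of $\|\nabla^k u\|_{L^p}$ over the chain cubes. The third is bounded overlap of the chains, with multiplicity depending only on $\epsilon,n$. Summing these bounds gives $\|\nabla^k Eu\|_{L^p}\le C\|u\|_{W^{k,p}}$ and similarly for lower derivatives, with the small cubes near $\partial\Omega_\mu$ carrying the factor $\ell^{k-|\beta|}\le 1$.

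\textbf{Step 3: The boundary and large cubes.} I check that $\partial\Omega_\mu$ has measure zero, which follows from the Jones condition with a constant independent of $\mu$, so $Eu$ is defined almost everywhere. I then show the weak derivatives glue across the boundary by approximating $u$ with $C^\infty(\Omega_\mu)\cap W^{k,p}$ functions. Finally I multiply by a cutoff that handles cubes of size greater than $c\delta_0$; its derivatives are bounded by powers of $\delta_0^{-1}$. The main obstacle is Step 1: producing the reflected cubes and the bounded-length chains with constants that provably depend only on $(\delta_0,\epsilon,n)$, together with the bounded-overlap count. Everything else is Jones' argument verbatim. I would present that step carefully and cite \cite{Jones1981} for the rest, noting explicitly that his constant $C=C(\delta_0,\epsilon,k,p,n)$ is independent of the domain, which gives $K$.
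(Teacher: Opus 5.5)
Your route is the paper's: its proof is only a citation of Jones's Theorem~1, and you propose to rerun that construction while tracking constants.

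Your opening caveat is correct, and it corrects the statement (and the paper's one-line citation) rather than being a technicality. Jones's constant depends on $n,k,p,\epsilon,\delta_0$ \emph{and} $\operatorname{rad}(\Omega)$, and no uniform bound can be deduced from the uniform $(\delta_0,\epsilon)$ condition alone. The balls $B_r$, $0<r\le 1$, form a uniform Jones family: the constant of a ball is scale invariant, and every $\delta_0$ works. Take $u\equiv 1$ on $B_r$, with $k\ge 1$ and $p<n$, and let $S=S(n,p)$ be the Sobolev constant of $\mathbb{R}^n$. Any extension $U\in W^{k,p}(\mathbb{R}^n)$ satisfies $\|U\|_{W^{k,p}(\mathbb{R}^n)}\ge\|\nabla U\|_{L^p}\ge S^{-1}\|U\|_{L^{p^*}(B_r)}=S^{-1}|B_r|^{1/p-1/n}$, whereas $\|u\|_{W^{k,p}(B_r)}=|B_r|^{1/p}$. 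Hence every extension operator has norm at least $S^{-1}|B_r|^{-1/n}\simeq r^{-1}$. This covers $k=1$, $p=2$, $n\ge 3$, which is the case the paper actually uses.

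So you must \emph{add} a hypothesis $\inf_\mu\operatorname{rad}(\Omega_\mu)\ge r_0>0$. Your alternative of ``noting that the relevant scale is $\min(\delta_0,\operatorname{rad}\Omega_\mu)$'' does not yield a $\mu$-independent $K$. Since $|\Omega|\le\omega_n\operatorname{rad}(\Omega)^n$, a uniform lower bound on $|\Omega_\mu|$ suffices. Under the standing assumptions of the upper semicontinuity section this follows from $|\Omega_\mu\triangle\Omega_0|\to 0$ for small $\mu$. The uniform upper volume bound used elsewhere in the paper does not provide it.

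Two steps of your sketch need matching adjustments.

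\textbf{Step 1.} You cut $W_3$ at $\ell(Q)\le c\,\delta_0$ and assert that the Jones condition alone yields $Q^*$. But a Whitney cube of $\Omega_\mu$ has side at most $\operatorname{diam}(\Omega_\mu)/\sqrt n\le 2\operatorname{rad}(\Omega_\mu)/\sqrt n$. So no $Q^*$ with $\ell(Q^*)\ge\ell(Q)$ exists once $\ell(Q)$ exceeds that bound. $W_3$ and your final cutoff must therefore live at scale $c\min(\delta_0,r_0)$. This is exactly where $r_0$ enters $K$, through cutoff derivatives of size $\min(\delta_0,r_0)^{-|\alpha|}$.

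\textbf{Step 2.} The polynomial of best $L^p$ approximation depends nonlinearly on $u$ when $p\ne 2$, so the resulting $E_\mu$ would not be linear, as the theorem requires. Use a linear projection instead, e.g.\ Jones's choice: the unique $P_{Q^*}u$ of degree $\le k-1$ with $\int_{Q^*}D^\beta(u-P_{Q^*}u)\,dx=0$ for all $|\beta|\le k-1$. It is independent of $p$ and satisfies the same Poincar\'e estimate $\|D^\beta(u-P_{Q^*}u)\|_{L^p(Q^*)}\le C\ell^{k-|\beta|}\|\nabla^k u\|_{L^p(Q^*)}$.

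With these changes your outline is a faithful constant-tracking version of Jones's proof, giving $K=K(n,k,p,\epsilon,\delta_0,r_0)$.
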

\proof The result was proved by Jones in \cite{Jones1981}, where it is also shown that this uniform condition is optimal in a certain sense (see Theorems $1$ and $3$ in \cite{Jones1981}). \eproof

Combining Theorem \ref{thm:extension} with the standard Sobolev embeddings in $\mathbb{R}^N $, we obtain uniform embedding constants for our family of domains.

\begin{lemma}[Uniform Sobolev Embedding]\label{lem:uniform_sobolev}   Suppose  that  $\{\Omega_\mu\}_{\mu \in \Lambda}$  is a  uniform Jones  family of domains. Then there exists a constant $C_S > 0$, independent of $\mu$, such that the continuous embedding $W^{k,p}(\Omega_\mu) \hookrightarrow L^{\frac{pn}{n-kp}}(\Omega_\mu)$, for $kp < n$, holds uniformly:
\begin{equation}
    \|u\|_{L^{\frac{pn}{n-kp}}(\Omega_\mu)} \leq C_S \|u\|_{W^{k,p}(\Omega_\mu)}, \quad \forall u \in W^{k,p}(\Omega_\mu).
\end{equation}
\end{lemma}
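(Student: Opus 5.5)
The plan is the standard ``extend, embed, restrict'' argument, with the only care being to check that no constant depends on $\mu$. Fix $k,p$ with $kp<n$ and set $p^*=\frac{pn}{n-kp}$. For $u\in W^{k,p}(\Omega_\mu)$, let $E_\mu u\in W^{k,p}(\mathbb{R}^n)$ be the Jones extension from Theorem \ref{thm:extension}. We then apply the classical Sobolev embedding on the whole space,
\begin{equation*}
\|v\|_{L^{p^*}(\mathbb{R}^n)}\le C_0(n,k,p)\,\|v\|_{W^{k,p}(\mathbb{R}^n)},\qquad v\in W^{k,p}(\mathbb{R}^n),
\end{equation*}
whose constant $C_0$ depends only on $n,k,p$. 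For $k=1$ this is the Gagliardo--Nirenberg--Sobolev inequality; for general $k$ it follows by iterating along the chain $p\to p_1^*\to\cdots$, or from the Bessel potential representation. Crucially, $C_0$ carries no information about any domain.

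Since $E_\mu u|_{\Omega_\mu}=u$, restriction decreases the $L^{p^*}$ norm, so
\begin{equation*}
\|u\|_{L^{p^*}(\Omega_\mu)}\le \|E_\mu u\|_{L^{p^*}(\mathbb{R}^n)}\le C_0\|E_\mu u\|_{W^{k,p}(\mathbb{R}^n)}\le C_0K\,\|u\|_{W^{k,p}(\Omega_\mu)}.
\end{equation*}
Hence we may take $C_S=C_0K$, which is independent of $\mu$ because $K$ is, by Theorem \ref{thm:extension}.

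The only delicate point is making sure that the constant $K$ in Theorem \ref{thm:extension} really depends only on $(\delta_0,\epsilon,n,k,p)$ and not on, say, the diameter of $\Omega_\mu$. In Jones' construction the inhomogeneous $W^{k,p}$ norm is used, and the lower-order terms handle small Whitney cubes of size comparable to $\delta_0$. So uniformity of the Jones parameters is exactly what is needed, and this is precisely the content of the theorem as stated. No volume bound is needed for this particular lemma.
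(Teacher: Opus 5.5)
Your argument is the paper's own: the lemma is obtained there in one line, by combining Theorem \ref{thm:extension} with the whole-space Sobolev inequality. Your chain $\|u\|_{L^{p^*}(\Omega_\mu)}\le\|E_\mu u\|_{L^{p^*}(\mathbb{R}^n)}\le C_0K\|u\|_{W^{k,p}(\Omega_\mu)}$, with $p^*=\frac{pn}{n-kp}$, is a correct derivation from that theorem as stated.

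Your closing claim, however, is false. You say $K$ depends only on $(\delta_0,\epsilon,n,k,p)$ and not on the diameter of $\Omega_\mu$. In fact Jones' constant also depends on a lower bound for the size of the domain. Take the balls $B_\mu$ with $\mu\in(0,1]$. They form a uniform Jones family with bounded volume, because the two Jones conditions are invariant under dilation. Yet $u\equiv 1$ gives $\|u\|_{L^{p^*}(B_\mu)}/\|u\|_{W^{k,p}(B_\mu)}\sim\mu^{-k}\to\infty$. So Theorem \ref{thm:extension} and this lemma, and hence both your proof and the paper's, need an extra hypothesis such as $\inf_\mu \operatorname{diam}\Omega_\mu>0$. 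That hypothesis does hold for small $\mu$ under the volume convergence $|\Omega_\mu\triangle\Omega_0|\to 0$ assumed in Section \ref{sec:upper_semicontinuity}.
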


\subsection{Hypotheses on the Nonlinearity}
\label{subsec:hyp_nonlinearity}

We  will need  that the nonlinear reaction term $f: \mathbb{R} \to \mathbb{R}$ satisfies  standard conditions in order to guarantee local well-posedness, global existence, and the dissipativity of the dynamical system.

%

\begin{hypothesis}[Lipschitz growth condition (subcritical)  on $f$]\label{hyp:nonlinear_growth}    There exist constants $C > 0$ and $p \geq 1$ such that for all $x, y \in \mathbb{R}$,
    \begin{equation*}
        |f(x) - f(y)| \leq C|x - y|(1 + |x|^{p-1} + |y|^{p-1}).
    \end{equation*}
    Setting $y=0$ immediately yields the standard polynomial growth bound $|f(s)| \leq C'(1 + |s|^p)$. If $n \geq 3$, we require the subcriticality condition $p < \frac{n+2}{n-2}$. If $n \leq 2$, the exponent $p$ can be an arbitrary finite number.
 \end{hypothesis}
 
 \begin{hypothesis}[Dissipativity condition  on $f$]\label{hyp:nonlinear_dissipativity}   
 There exist constants $\beta > 0$ and $C_0 \geq 0$ such that
    \begin{equation*}
            f(s)s \leq -\beta s^2 + C_0, \quad \forall s \in \mathbb{R}.
                \end{equation*}
 \end{hypothesis}

The Lipschitz growth condition ensures that the Nemytskii operator associated with $f$ maps the phase space into the base dual space smoothly enough to guarantee local well-posedness. The dissipativity condition then provides the necessary absorbing sets to ensure global existence and the existence of a global attractor in the energy space.

\subsection{The Elliptic Operator and Functional Framework}
\label{subsec:hyp_elliptic}

We consider a second-order elliptic operator in divergence form, initially denoted by $\mathcal{L} u = \mathrm{div}(A \nabla u)$, subject to homogeneous Neumann conditions and satisfying the following strict ellipticity condition.

\begin{hypothesis}[Ellipticity and Coefficients]\label{hyp:elliptic_condition}
The matrix $A \in L^\infty(\mathbb{R}^n, \mathbb{R}^{n \times n})$ is symmetric and satisfies a strict ellipticity condition: there exists $\alpha > 0$ such that
\begin{equation}
    A \xi \cdot \xi \geq \alpha |\xi|^2, \quad \text{for a.e. } x \in \mathbb{R}^n \text{ and all } \xi \in \mathbb{R}^n.
\end{equation}
\end{hypothesis}

To ensure that the linear operator associated with our problem is strictly positive (and therefore  allowing the construction of a fractional power scale), we introduce a standard spectral shift. Utilizing the dissipativity constant $\beta > 0$ established in Hypothesis \ref{hyp:nonlinear_dissipativity}, we define the shifted linear operator $\mathcal{A}_{\mu}: H^1(\Omega_\mu) \to (H^1(\Omega_\mu))'$ by:
\begin{equation*}
    \mathcal{A}_{\mu}u = -\mathrm{div}(A(x)\nabla u) + \frac{\beta}{2}u.
\end{equation*}

The associated symmetric bilinear form $a_\mu: H^1(\Omega_\mu) \times H^1(\Omega_\mu) \to \mathbb{R}$, given by
\begin{equation}
    a_\mu(u,v) = \int_{\Omega_\mu} \left( A(x)\nabla u \cdot \nabla v + \frac{\beta}{2} u v \right) dx,
\end{equation}
is strictly coercive on $H^1(\Omega_\mu)$. This coercivity yields the standard Gelfand triple $H^1(\Omega_\mu) \hookrightarrow L^2(\Omega_\mu) \hookrightarrow (H^1(\Omega_\mu))'$ and provides a family of positive, self-adjoint operators with compact resolvents.

\textbf{Reformulation of the Problem.} With the spectral shift applied to the linear operator, the original parabolic equation $u_t - \mathrm{div}(A(x)\nabla u) = f(u)$ is naturally rewritten as:
\begin{equation}
    u_t + \mathcal{A}_{\mu}u = \tilde{f}(u),
\end{equation}
where the modified nonlinearity is given by $\tilde{f}(s) = f(s) + \frac{\beta}{2}s$. It is straightforward to check that $\tilde{f}$ inherits the growth and dissipativity from $f$. 
Thus, the modified dynamical system maintains the exact same essential structure while ensuring the strict positivity of the principal linear part.

\subsection{Global Attractors and Invariant Trajectories}

To study the long-time dynamics of our system, we recall the standard framework of infinite-dimensional dynamical systems. Let $(X, d)$ be a complete metric space and let $\{S(t)\}_{t \ge 0}$ be a strongly continuous semigroup acting on $X$.

\begin{definition}[Global Attractor]
A set $\mathcal{A} \subset X$ is called a global attractor for the semigroup $\{S(t)\}_{t \ge 0}$ if it satisfies the following properties:
\begin{itemize}
    \item[(i)] $\mathcal{A}$ is compact in $X$.
    \item[(ii)] $\mathcal{A}$ is strictly invariant, that is, $S(t)\mathcal{A} = \mathcal{A}$ for all $t \ge 0$.
    \item[(iii)] $\mathcal{A}$ attracts all bounded subsets of $X$, meaning that for any bounded set $B \subset X$,
    \begin{equation}
        \lim_{t \to \infty} \mathrm{dist}_X(S(t)B, \mathcal{A}) = 0,
    \end{equation}
    where $\mathrm{dist}_X(A, B) = \sup_{a \in A} \inf_{b \in B} d(a,b)$ denotes the Hausdorff semidistance in $X$.
\end{itemize}
\end{definition}

A fundamental consequence of the strict invariance property (ii) is the existence of complete trajectories. If $u_0 \in \mathcal{A}$, there exists a global trajectory $u: \mathbb{R} \to \mathcal{A}$ such that $u(0) = u_0$ and $S(t)u(\tau) = u(t+\tau)$ for all $\tau \in \mathbb{R}$ and $t \ge 0$. This property allows us to evaluate the flow backwards in time, which is a critical tool when performing differential energy estimates uniformly on the attractor without encountering initial-time singularities.

\subsection{The Uniform Gronwall Lemma}

To propagate $L^p$ energy bounds across the scale of $L^p$ spaces, we will use the following tool from the theory of dynamical systems (see \cite{foias_prodi_1967} or \cite{temam1988} for the proof).

\begin{lemma}[Uniform Gronwall Lemma]\label{lem:gronwall}
Let $g, h$, and $y$ be non-negative locally integrable functions on $[t_0, \infty)$ such that
\begin{equation}
    \frac{dy}{dt} \leq g y + h \quad \text{for } t \geq t_0,
\end{equation}
in the sense of distributions. Assume that there exist positive constants $r, a_1, a_2$, and $a_3$ such that
\begin{equation}
    \int_t^{t+r} g(s) ds \leq a_1, \quad \int_t^{t+r} h(s) ds \leq a_2, \quad \int_t^{t+r} y(s) ds \leq a_3,
\end{equation}
for all $t \geq t_0$. Then, for any $t \geq t_0$, the function $y$ satisfies the uniform bound:
\begin{equation}
    y(t+r) \leq \left( \frac{a_3}{r} + a_2 \right) e^{a_1}.
\end{equation}
\end{lemma}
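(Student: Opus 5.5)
The Uniform Gronwall Lemma (Lemma \ref{lem:gronwall}) is proved with an integrating factor combined with an averaging argument over the window $[t,t+r]$. Fix $t \ge t_0$ and let $s \in [t, t+r]$. Multiplying the differential inequality $y' \le g y + h$ by the nonnegative factor $\exp\bigl(-\int_s^{\tau} g(\sigma)\,d\sigma\bigr)$ gives
\[
\frac{d}{d\tau}\Bigl( y(\tau)\, e^{-\int_s^{\tau} g(\sigma)\,d\sigma} \Bigr) \le h(\tau)\, e^{-\int_s^{\tau} g(\sigma)\,d\sigma} \le h(\tau).
\]
Since $g$ and $h$ are only locally integrable, this is understood in the sense of distributions. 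To make it rigorous, I would note that $y$ is then locally absolutely continuous, because $y' \le g y + h$ with the right-hand side in $L^1_{loc}$ means $y' - (gy+h)$ is a nonpositive distribution, hence a measure. The chain rule for the product of $y$ with the absolutely continuous weight is then justified, or one mollifies and passes to the limit. Integrating over $\tau \in [s, t+r]$ yields
\[
y(t+r) \le \Bigl( y(s) + \int_s^{t+r} h(\tau)\,d\tau \Bigr) e^{\int_s^{t+r} g(\sigma)\,d\sigma} \le \bigl( y(s) + a_2 \bigr) e^{a_1},
\]
where the last step uses $g,h \ge 0$ together with the window bounds $\int_t^{t+r} g \le a_1$ and $\int_t^{t+r} h \le a_2$, since $[s,t+r] \subset [t,t+r]$.

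The final step removes the dependence on the unknown value $y(s)$ by integrating the inequality in $s$ over $[t,t+r]$. The left side does not depend on $s$, so this gives
\[
r\, y(t+r) \le \Bigl( \int_t^{t+r} y(s)\,ds + r a_2 \Bigr) e^{a_1} \le (a_3 + r a_2) e^{a_1}.
\]
Dividing by $r$ produces exactly the claimed bound $y(t+r) \le (a_3/r + a_2) e^{a_1}$.

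The only delicate point is the regularity justification for the integrating-factor computation when $y$ is merely locally integrable with distributional derivative controlled from above. I would handle it by showing that $y$ has a representative that is of bounded variation and whose jumps are only downward. The integrated inequality then holds for a.e. $s$ and for every endpoint $t+r$ with this representative. Because the final bound is obtained after averaging in $s$, an a.e. statement in $s$ is sufficient. Otherwise the argument is a direct calculation.
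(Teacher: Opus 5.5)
Your argument is correct and is the standard proof (integrating factor on $[s,t+r]$, then averaging $y(t+r)\le (y(s)+a_2)e^{a_1}$ over $s\in[t,t+r]$) found in Temam's book; the paper gives no proof of its own and simply cites Temam and Foias--Prodi. One small slip in your regularity aside: $y' \le gy+h$ in the sense of distributions only gives $y\in BV_{\mathrm{loc}}$ with downward jumps, not local absolute continuity, but as you note later, this weaker regularity is all the averaging argument needs.
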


\subsection{Semigroup Approximation via Ambient Space}
\label{subsec:trotter_kato}

To analyze the convergence of the linear dynamics across the perturbed domains $\Omega_\mu$, we rely on the classical Trotter-Kato approximation theorem on fixed Banach spaces (see, e.g., Pazy \cite{Pazy1983} or Kato \cite{Kato1966}). Rather than employing abstract approximation frameworks for varying topologies, we leverage the uniform Jones extension operators to lift the problem into a common ambient space.

\begin{theorem}[Classical Trotter-Kato Theorem]\label{thm:trotter_kato_classical}
Let $Y$ be a Banach space, and let $A_\mu$ (for $\mu \geq 0$) be the infinitesimal generators of strongly continuous semigroups $e^{-A_\mu t}$ on $Y$. Assume that the semigroups satisfy a uniform stability condition: there exist constants $M \geq 1$ and $\omega \in \mathbb{R}$ such that
\begin{equation}
    \|e^{-A_\mu t}\|_{\mathcal{L}(Y)} \leq M e^{\omega t}, \quad \text{for all } t \geq 0 \text{ and } \mu \geq 0.
\end{equation}
If there exists $\lambda > \omega$ such that for all $f \in Y$, the resolvents converge strongly as $\mu \to 0$,
\begin{equation}
    \lim_{\mu \to 0} \|(A_\mu - \lambda I)^{-1} f - (A_0 - \lambda I)^{-1} f\|_{Y} = 0,
\end{equation}
then the semigroups also converge strongly for every $t \geq 0$:
\begin{equation}
    \lim_{\mu \to 0} \|e^{-A_\mu t} f - e^{-A_0 t} f\|_{Y} = 0, \quad \forall f \in Y.
\end{equation}
Furthermore, this convergence is uniform for $t$ in compact subintervals of $[0, \infty)$.
\end{theorem}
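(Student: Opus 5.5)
The plan is to prove the classical Trotter--Kato statement in the usual way: express the difference of the two semigroups through a variation-of-parameters identity for resolvents, and pass from convergence of resolvents to convergence of semigroups with a density argument. To fix notation, write $T_\mu(t)=e^{-A_\mu t}$ and $R_\mu=(\lambda I - (-A_\mu))^{-1}$, where $-A_\mu$ is the generator. I read the sign convention in the statement as saying that the resolvent of the generator at $\lambda>\omega$ converges strongly, i.e. $R_\mu f\to R_0 f$ for every $f\in Y$. The uniform bound $\|T_\mu(t)\|\le Me^{\omega t}$ puts $\lambda$ in the resolvent set of every generator, with $\|R_\mu\|\le M/(\lambda-\omega)$ uniformly in $\mu$.

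\textbf{Step 1: the key identity.} For $x\in Y$ and $t\ge 0$, differentiate $s\mapsto T_\mu(t-s)R_\mu T_0(s)R_0x$ on $[0,t]$. Both generators commute with their own resolvents, and the relevant vectors lie in the domains, so the derivative is
\[
T_\mu(t-s)\bigl[R_\mu(-A_0) - (-A_\mu)R_\mu\bigr]T_0(s)R_0x = T_\mu(t-s)(R_0-R_\mu)T_0(s)x .
\]
Integrating over $[0,t]$ gives
\[
R_\mu\bigl(T_\mu(t)-T_0(t)\bigr)R_0x=\int_0^t T_\mu(t-s)(R_0-R_\mu)T_0(s)x\,ds .
\]
The integrand tends to $0$ pointwise in $s$ by the hypothesis. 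It is dominated by $2M^2e^{\omega t}\|R\|\,\|x\|$, where $\|R\|$ denotes the uniform resolvent bound. Moreover $\{T_0(s)x: s\in[0,t]\}$ is compact, and the operators $R_0-R_\mu$ are uniformly bounded, so strong convergence is uniform on this compact set. Hence the integral tends to $0$ uniformly for $t$ in compact intervals.

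\textbf{Step 2: removing the resolvents.} Split
\[
(T_\mu(t)-T_0(t))R_0^2x = T_\mu(t)(R_0-R_\mu)R_0x + R_\mu(T_\mu(t)-T_0(t))R_0x + (R_\mu-R_0)T_0(t)R_0x .
\]
The first term vanishes by the hypothesis and the uniform bound on $T_\mu(t)$. The second term is Step 1. The third vanishes uniformly for $t\in[0,T]$, by the same compactness of the orbit. Hence $T_\mu(t)y\to T_0(t)y$ uniformly on compact time intervals for every $y\in \mathrm{Range}(R_0^2)=D(A_0^2)$.

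\textbf{Step 3: density.} $D(A_0^2)$ is dense in $Y$ for the generator of a $C_0$-semigroup. The family $\{T_\mu(t)-T_0(t)\}$ is bounded by $2Me^{\omega T}$ on $[0,T]$, so an $\varepsilon/3$ argument extends the convergence to all $f\in Y$, still uniformly on $[0,T]$.

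The main obstacle is the rigorous justification of the differentiation in Step 1. The function $s\mapsto T_\mu(t-s)R_\mu T_0(s)R_0x$ has to be shown differentiable, which uses that $R_0x\in D(A_0)$ and that $R_\mu$ maps into $D(A_\mu)$. I would handle this with a product-rule lemma for strongly continuous families applied to domain elements. Alternatively, everything can be cited to Pazy \cite{Pazy1983}, Thm.~3.4.2.
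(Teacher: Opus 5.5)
Your proof is correct. It is exactly the standard textbook proof in Pazy: the resolvent identity, the three-term splitting applied to $R_0^2x$, and the density of $D(A_0^2)$. The paper gives no proof of its own and simply cites Pazy and Kato. Your reading of the resolvent as $(\lambda I + A_\mu)^{-1}$ is the right one: the literal $(A_\mu-\lambda I)^{-1}$ is the resolvent of the generator $-A_\mu$ at $-\lambda$, which need not exist for $\lambda>\omega$.

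The only slip is a harmless sign in Step 1. Integrating your (correct) derivative gives $R_\mu\bigl(T_0(t)-T_\mu(t)\bigr)R_0x=\int_0^t T_\mu(t-s)(R_0-R_\mu)T_0(s)x\,ds$. This is because the function $s \mapsto T_\mu(t-s)R_\mu T_0(s)R_0x$ equals $R_\mu T_0(t)R_0x$ at $s=t$ and $R_\mu T_\mu(t)R_0x$ at $s=0$. Since only the norm of the integral enters your estimates, nothing else changes.
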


\begin{rem}[Application to Lifted Operators]\label{rem:semigroup_application}
In our context, the target Banach space is the fixed global space $Y = H^1(\mathbb{R}^N)$ (or its corresponding fractional scale). As established, any operator on $H^1(\Omega_\mu)$ can be lifted to $Y$ via the composition with the uniform extension and restriction maps, $\mathfrak{E}_\mu \circ \cdot \circ R_\mu$. Because our operators $A_\mu$ are associated with uniformly coercive bilinear forms (thanks to the uniform ellipticity), their associated semigroups are uniformly quasi-contractive and sectorial. Consequently, the lifted semigroups trivially satisfy the uniform stability conditions of Theorem \ref{thm:trotter_kato_classical}. This algebraic structure allows us to deduce the convergence of the linear flows directly from the Mosco convergence of the resolvents, entirely bypassing the technicalities of limits in varying Banach spaces.
\end{rem}

\section{Abstract Formulation and Fractional Spaces}

To establish a rigorous framework for our problem on the possibly rough Jones domain $\Omega_\mu$, we cast the equation as an abstract Cauchy problem, as indicated in Section \ref{subsec:hyp_elliptic}. We rely on the theory of sectorial operators and interpolation spaces, using a Gelfand triple to deal with the lack of higher-order spatial regularity.

Let $V = H^1(\Omega_\mu)$ and $H = L^2(\Omega_\mu)$. Identifying $H$ with its dual $H'$, we obtain the standard Gelfand triple $V \hookrightarrow H \hookrightarrow V'$, where $V' = (H^1(\Omega_\mu))'$ and the embeddings are continuous and dense (see, for instance, \cite{Yagi2010} for details). 

We choose our primary base space to be $X_0 = V' = (H^1(\Omega_\mu))'$ and define the linear operator $\mathcal{A}_{\mu} \in \mathcal{L}(V, V')$ associated with the shifted bilinear form $a_\mu$ introduced in Section \ref{sec:preliminaries}:
\begin{equation}  \label{def:lin_operator}
    \langle \mathcal{A}_{\mu} u, v \rangle_{V', V} = a_\mu(u, v) = \int_{\Omega_\mu} \left( A(x)\nabla u \cdot \nabla v + \frac{\beta}{2} uv \right) dx, \quad \forall u, v \in V.
\end{equation}

 As established previously, the bilinear form  is symmetric, continuous, and strictly coercive on $V$. Consequently, $\mathcal{A}_{\mu}$ is a densely defined, strictly positive, self-adjoint, therefore sectorial operator in the Hilbert space $X_0$, with domain $D(\mathcal{A}_{\mu}) = V = H^1(\Omega_\mu)$. Therefore, $\mathcal{A}_{\mu}$ generates an analytic semigroup on $X_0$.

 For the sake of a less cumbersome notation we will drop the subscript $\mu $, and denote  this operator simply by $\mathcal{A}$,  wherever there is no danger of confusion.

We can define the fractional power operators associated with $\mathcal{A} $, denoted by ${\mathcal{A}}^\alpha$, and their corresponding fractional power spaces $X_\alpha = D({\mathcal{A}}^\alpha)$ for $\alpha \geq 0$, equipped with the graph norm $\|u\|_{X_\alpha} = \|{\mathcal{A}}^\alpha u\|_{X_0}$ (see \cite{Henry1981}, \cite{Yagi2010} for details). By definition, we have:
\begin{equation}
    X_0 = (H^1(\Omega_\mu))', \quad \text{and} \quad X_1 = H^1(\Omega_\mu).
\end{equation}

To better understand the properties of the spaces $X_\alpha$ for $\alpha > 1$ (including the compactness of their embeddings), we introduce an auxiliary scale of spaces based on $L^2(\Omega_\mu)$. Let $Y_0 = H = L^2(\Omega_\mu)$, and let $\tilde{\mathcal{A}}$ be the restriction of $\mathcal{A} $ to $Y_0$. Then,  $\tilde{\mathcal{A}}$ is the unbounded operator on $L^2(\Omega_\mu)$ with domain $D(\tilde{\mathcal{A}}) = \{ u \in H^1(\Omega_\mu) : \mathcal{A} u \in L^2(\Omega_\mu) \}$. 

Like $\mathcal{A}$, the operator $ \tilde{\mathcal{A} }$ is positive, self-adjoint, and sectorial on $Y_0$. We define its fractional spaces as $Y_\alpha = D(\tilde{\mathcal{A}}^\alpha)$. By the theory of interpolation spaces, the $X$-scale and the $Y$-scale are shifted precisely by an index of $1/2$. That is, for any $\alpha \geq 0$:
\begin{equation}\label{eq:scale_identification}
    X_{\alpha + 1/2} = Y_\alpha.
\end{equation}
Setting $\alpha = 0$, we recover $X_{1/2} = Y_0 = L^2(\Omega_\mu)$. Setting $\alpha = 1/2$, we obtain $X_1 = Y_{1/2} = H^1(\Omega_\mu)$. 

The geometric properties of the Jones domain dictate the compactness of the resolvents. While we cannot guarantee that $D(\tilde{\mathcal{A}}) \subset H^2(\Omega_\mu)$, the Rellich-Kondrachov theorem ensures that the embedding $Y_{1/2} \hookrightarrow Y_0$ (which is $H^1(\Omega_\mu) \hookrightarrow L^2(\Omega_\mu)$) is strictly compact. This directly implies that the fractional inverse operator $\tilde{\mathcal{A}}^{-1/2}: Y_0 \to Y_0$ is compact. Consequently, $\tilde{\mathcal{A}}$ has a compact resolvent, and standard fractional power theory guarantees that the embeddings $Y_\alpha \hookrightarrow Y_\beta$ are compact for any $\alpha > \beta \geq 0$.

By the scale identification \eqref{eq:scale_identification}, this compactness transfers directly to the $X$-scale. For any $\delta > 0$, we have:
\begin{equation}
    X_{1+\delta} = Y_{1/2+\delta} \hookrightarrow Y_{1/2} = X_1,
\end{equation}
where the embedding is compact. This abstract identification allows us to work with the phase space $X_1 = H^1(\Omega_\mu)$ and utilize the slightly more regular spaces $X_{1+\delta}$ to extract compactness for our orbits, completely bypassing the need for classical spatial regularity beyond $H^1$.

To prove the well-posedness of our problem, we appeal to the abstract sectorial evolutionary framework developed by Henry \cite{Henry1981}. This theory requires a sectorial operator defined on a base Banach space $\mathcal{X}_0$ with domain $\mathcal{X}_1$, and a nonlinearity that is locally Lipschitz from an intermediate fractional space $\mathcal{X}_\alpha$ into $\mathcal{X}_0$, for some $0 \leq \alpha < 1$. 

Our natural phase space for the initial conditions is $X_1 = H^1(\Omega_\mu)$. If we were to use $X_0 = (H^1(\Omega_\mu))'$ as the base space, the initial condition space $X_1$ would correspond exactly to $\alpha = 1$, which strictly falls outside Henry's admissible range. To circumvent this and still work with the phase space $H^1(\Omega_\mu)$, we choose a fixed parameter $0 < \delta < \frac{1}{2}$ and consider the shifted base space $\mathcal{Z}_0 = X_\delta$. The shifted operator $\mathcal{A}$, acting on this new base space, has the domain $\mathcal{Z}_1 = X_{1+\delta}$. Under this setting, the operator remains sectorial on $\mathcal{Z}_0$, thus preserving the generation of an analytic semigroup.

In this shifted framework, the desired phase space $X_1 = H^1(\Omega_{\mu})$ is naturally recovered as a fractional space of index strictly less than one. Specifically, setting $\alpha = 1 - \delta$, we have:
\begin{equation}
    \mathcal{Z}_\alpha = \mathcal{Z}_{1-\delta} = X_{\delta + (1-\delta)} = X_1 = H^1(\Omega_\mu).
\end{equation}
Because $\delta > 0$, we immediately satisfy the crucial abstract requirement that $\alpha < 1$. 

From now on,  we bypass explicit reference to the shifted scale $\mathcal{Z}_\alpha$ and work directly within the original $X_{\alpha}$ scale, focusing on the range $\delta < \alpha < 1+ \delta$. More precisely, we consider the abstract Cauchy problem:
\begin{equation} \label{abstract_problem_scale}
    \begin{cases}
    \dfrac{du}{dt} + \mathcal{A} u = F(u), \quad t \in [0, \tau], \\
    u(0) = u_0 \in X_{\alpha}, \quad 1 \leq \alpha < 1+ \delta,
    \end{cases}
\end{equation}
where $0 < \delta < \frac{1}{2}$,  $\mathcal{A} $ is the  operator defined in \eqref{def:lin_operator}  (in the base space  $X_{\delta}$) and  ${F}(u)(x) = \tilde{f}(u(x))$ is the Nemytskii operator associated with the shifted nonlinearity defined in Section \ref{sec:preliminaries}.

\begin{rem}[The Two-Dimensional Case]
Throughout this work, the hypotheses on the nonlinearity, the  explicit calculations for the Sobolev embeddings and the Moser-Alikakos iteration are presented for dimensions $n \ge 3$, where one must carefully track the critical Sobolev exponent $2^* = \frac{2n}{n-2}$. The two-dimensional case ($n=2$) is actually simpler, as the uniform continuous embedding $H^1(\Omega_\mu) \hookrightarrow L^q(\Omega_\mu)$ holds for any finite $q \in [2, \infty)$. Consequently, the bootstrap arguments can be directly adapted by choosing an arbitrarily large initial exponent $q$, leading to the same uniform $L^\infty$ bounds. For the sake of conciseness and to avoid repetitive notational distinctions, we carry out the detailed proofs  only for $n \ge 3$.
\end{rem}

\section{Local Existence in a Scale of Banach Spaces}

A solution of \eqref{abstract_problem_scale} on $[0,\tau)$ is a continuous function $u: [0, \tau) \to X_{\alpha}$ such that $u(0) = u_0$, the mapping $t \mapsto F(u(t))$ is continuous into $X_\delta$, $u(t) \in D(\mathcal{A})$ for $t>0$, and $u$ satisfies \eqref{abstract_problem_scale} pointwise on $(0, \tau)$. This formulation, due to Miklav\v{c}i\v{c} \cite{Miklavicic1985}, resolves a classic non-uniqueness caveat present in the original definition by Henry \cite{Henry1981} without altering the core analytical proofs. 
  
\begin{theorem}[Local Existence]\label{thm:local_existence}
Assume that $\Omega_{\mu}$ is a Jones domain, the strict ellipticity condition (Hypothesis \ref{hyp:elliptic_condition}) on $A$, and the subcritical growth condition (Hypothesis \ref{hyp:nonlinear_growth}) on $f$ hold. Then, for any sufficiently small $0 < \delta < \frac{1}{2}$ and any initial data $u_0 \in X_{\alpha}$ with $1 \leq \alpha < 1+ \delta$, there exists a unique solution to the abstract Cauchy problem \eqref{abstract_problem_scale} defined on a maximal interval $[0,T_{max})$. If $T_{max} < \infty$, then 
\begin{equation}
    \limsup_{t \nearrow T_{max}} \|u(t)\|_{X_{\alpha}} = \infty.
\end{equation} 
In addition, if $f \in C^r(\mathbb{R})$, then the assignment $(t, u_0) \mapsto u(t, u_0)$ is a $C^r$ function within its domain of definition.
\end{theorem}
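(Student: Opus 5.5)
We will apply Henry's abstract local existence theorem (in Miklav\v{c}i\v{c}'s formulation) in the shifted setting $\mathcal{Z}_0 = X_\delta$, $\mathcal{Z}_1 = X_{1+\delta}$. Here $\mathcal{A}$ is sectorial with compact resolvent, as established in Section 3. The initial data $u_0 \in X_\alpha$, $1\le \alpha<1+\delta$, corresponds to $\mathcal{Z}_{\alpha-\delta}$ with $\alpha-\delta<1$. It therefore suffices to show that the Nemytskii operator $F$ is locally Lipschitz, and bounded on bounded sets, as a map $X_1 = H^1(\Omega_\mu)\to X_\delta$. Since $X_\alpha\hookrightarrow X_1$ for $\alpha\ge 1$, the same property then holds from $X_\alpha$.

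The key step is to identify a concrete space embedded in $X_\delta$. For $0<\delta<1/2$ we have $X_\delta = [X_0,X_{1/2}]_{2\delta} = [(H^1)', L^2]_{2\delta}$, which is the dual of $[H^1,L^2]_{2\delta}=X_{1-\delta}$ (self-adjointness makes the scale symmetric about $X_{1/2}$). The space $X_{1-\delta}=[L^2,H^1]_{1-2\delta}$ is a fractional Sobolev space $H^{1-2\delta}(\Omega_\mu)$. Using the uniform Jones extension (Theorem \ref{thm:extension}) and interpolation, we get $X_{1-\delta}\hookrightarrow L^{r}(\Omega_\mu)$ with $\frac1r=\frac12-\frac{1-2\delta}{n}$. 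Alternatively, we can interpolate the embeddings $H^1\hookrightarrow L^{2^*}$ (Lemma \ref{lem:uniform_sobolev}) and $L^2\hookrightarrow L^2$ directly, which avoids fractional Sobolev spaces on rough domains. By duality, $L^{r'}(\Omega_\mu)\hookrightarrow X_\delta$, with $\frac1{r'}=\frac12+\frac{1-2\delta}{n}$.

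Next we estimate $F$. By Hypothesis \ref{hyp:nonlinear_growth} and H\"older,
\begin{equation*}
\|F(u)-F(v)\|_{L^{r'}}\le C\|u-v\|_{L^{2^*}}\bigl(|\Omega_\mu|^{\theta}+\|u\|_{L^{2^*}}^{p-1}+\|v\|_{L^{2^*}}^{p-1}\bigr),
\end{equation*}
provided $\frac{1}{r'}\ge \frac{p}{2^*}$, i.e.\ $\frac12+\frac{1-2\delta}{n}\ge\frac{p(n-2)}{2n}$. This is equivalent to $p\le \frac{n+2-4\delta}{n-2}$. Since $p<\frac{n+2}{n-2}$ strictly, this holds for all sufficiently small $\delta$, which explains the phrase ``sufficiently small $\delta$'' in the statement. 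Combined with $H^1\hookrightarrow L^{2^*}$, this gives local Lipschitz continuity $X_1\to X_\delta$. I expect the main obstacle to be rigorously justifying the embedding $L^{r'}\hookrightarrow X_\delta$ on a non-smooth domain. I will argue purely abstractly, via the complex interpolation identity for the self-adjoint scale and Riesz–Thorin-type interpolation of the embeddings, so that only Lemma \ref{lem:uniform_sobolev} is used.

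With local Lipschitz continuity established, Henry's theorem (Thm.\ 3.3.3 and 3.3.4 in \cite{Henry1981}) yields existence and uniqueness on a maximal interval, together with the blow-up alternative in the $\mathcal{Z}_{\alpha-\delta}=X_\alpha$ norm. For the smoothness claim, if $f\in C^r$ we show that $F:X_1\to L^{r'}$ is $C^r$. The derivatives are the Nemytskii maps $h\mapsto f^{(k)}(u)h_1\cdots h_k$. For $k\ge 2$ this requires growth bounds on $f^{(k)}$, which we obtain from the Lipschitz structure where they are available, and otherwise assume at the appropriate H\"older exponent. Henry's Theorem 3.4.4 then gives $C^r$ dependence of $(t,u_0)\mapsto u(t,u_0)$.
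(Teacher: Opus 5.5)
Your proposal follows the paper's proof essentially step for step: the shifted base space $X_\delta$, the duality embedding $L^{2n/(n+2-4\delta)}(\Omega_\mu)\hookrightarrow X_\delta$, H\"older's inequality together with the choice $p\le \frac{n+2-4\delta}{n-2}$ for small $\delta$, and then Henry's Theorems 3.3.3, 3.3.4 and 3.4.4. Two of your additions improve on the paper. First, deriving $X_{1-\delta}\hookrightarrow L^{2n/(n-2+4\delta)}$ by interpolating $H^1\hookrightarrow L^{2^*}$ with $L^2\hookrightarrow L^2$ is cleaner than the paper's route, which applies Lemma \ref{lem:uniform_sobolev} (stated only for integer $k$) to the fractional space $H^{1-2\delta}(\Omega_\mu)$. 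Second, your caveat that the $C^r$ claim for $r\ge 2$ needs growth bounds on $f^{(k)}$, which Hypothesis \ref{hyp:nonlinear_growth} does not supply, is correct; the paper does not verify $F\in C^r$ either and only cites Henry.
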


\proof   
Since $\alpha \geq 1$, we have the continuous embedding $X_{\alpha} \hookrightarrow X_1 = H^{1}(\Omega_\mu)$. The standard Sobolev embeddings (Lemma \ref{lem:uniform_sobolev}) then guarantee that $X_{\alpha} \hookrightarrow L^{\frac{2n}{n-2}}(\Omega_\mu)$. 

To establish that the abstract nonlinearity maps into our chosen shifted base space, we exploit the duality properties of the fractional scale. Taking $L^2(\Omega_\mu)$ as the pivot space, the space $X_\delta$ is identified as the topological dual of $X_{1-\delta}$. Since $X_{1-\delta}$ corresponds to the fractional Sobolev space $H^{1-2\delta}(\Omega_\mu)$, Lemma \ref{lem:uniform_sobolev} yields the continuous embedding:
\begin{equation}
    X_{1-\delta} \hookrightarrow L^{\frac{2n}{n-2+4\delta}}(\Omega_\mu).
\end{equation}
By a standard duality argument, this implies that the dual of the Lebesgue space embeds continuously into $(X_{1-\delta})' = X_\delta$. That is, we obtain:
\begin{equation}\label{eq:duality_embedding}
    L^{\frac{2n}{n+2-4\delta}}(\Omega_\mu) \hookrightarrow X_\delta.
\end{equation}

Under the subcritical growth condition (Hypothesis \ref{hyp:nonlinear_growth}), the real function $f$ satisfies $|f(x) - f(y)| \leq C|x - y|(1 + |x|^{p-1} + |y|^{p-1})$. This specific structural bound ensures not only that the Nemytskii operator maps $L^{\frac{2n}{n-2}}(\Omega_\mu)$ into $L^{\frac{2n}{p(n-2)}}(\Omega_\mu)$, but also that it is locally Lipschitz continuous. Indeed, for any $u, v \in L^{\frac{2n}{n-2}}(\Omega_\mu)$, applying Hölder's inequality with conjugate exponents $p$ and $\frac{p}{p-1}$ yields:
\begin{equation}
    \|F(u) - F(v)\|_{L^{\frac{2n}{p(n-2)}}} \leq C \|u - v\|_{L^{\frac{2n}{n-2}}} \left( 1 + \|u\|_{L^{\frac{2n}{n-2}}}^{p-1} + \|v\|_{L^{\frac{2n}{n-2}}}^{p-1} \right).
\end{equation}
Since $f$ is strictly subcritical ($p < \frac{n+2}{n-2}$), we can always choose $\delta > 0$ sufficiently small such that
\begin{equation}
    p \leq \frac{n+2-4\delta}{n-2},
\end{equation}
which implies the continuous inclusion $L^{\frac{2n}{p(n-2)}}(\Omega_\mu) \hookrightarrow L^{\frac{2n}{n+2-4\delta}}(\Omega_\mu)$. Combining this inclusion with \eqref{eq:duality_embedding}, it follows that the Nemytskii operator
\begin{equation}
    F: X_{\alpha} \to X_\delta
\end{equation}
is well-defined and locally Lipschitz continuous on bounded subsets. The conclusion then follows directly from classical results in \cite{Henry1981}, specifically Theorems 3.3.3, 3.3.4, 3.4.4 and Corollaries 3.4.5 and 3.4.6.
\eproof

\section{Global Existence} \label{section:global_existence}

To establish that the local solutions obtained in Theorem \ref{thm:local_existence} can be extended globally in time, we show that their norms cannot blow up in finite time within the natural phase space $X_1 = H^1(\Omega_{\mu})$. Since the local solutions belong to $D(\mathcal{A}) = X_{1+\delta}$ for $t > 0$, proving global well-posedness for any initial data in $X_\alpha$ ($1 \leq \alpha < 1+\delta$) reduces to ruling out the possibility of a finite-time blow-up of the $X_1$-norm. 

We first establish a priori estimates for the solution in Lebesgue spaces and, consequently, for the nonlinear term in the shifted fractional base space $X_\delta$.

\begin{prop}[A Priori Bounds]\label{prop:apriori_bounds}
Assume the hypotheses of Theorem \ref{thm:local_existence} and the dissipativity condition (Hypothesis \ref{hyp:nonlinear_dissipativity}). Let $u(t)$ be a local solution on its maximal interval of existence $[0, T_{max})$. Then, for any $t \in [0, T_{max})$, the norm $\|u(t)\|_{L^q(\Omega_{\mu})}$ (where $q = \frac{2n}{n-2}$) and the non-linearity norm $\|F(u(t))\|_{X_\delta}$ remain bounded on bounded time intervals.
\end{prop}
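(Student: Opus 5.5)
The plan is to get an energy (Lyapunov) estimate in $H^1(\Omega_\mu)$ and then transfer it to $L^q$ and to $X_\delta$ through the embeddings already used in the proof of Theorem \ref{thm:local_existence}. Put $\mathcal{F}(s)=\int_0^s \tilde f(r)\,dr$ and define the energy
\begin{equation*}
E(u)=\frac12 a_\mu(u,u)-\int_{\Omega_\mu}\mathcal{F}(u)\,dx .
\end{equation*}
This is well defined on $H^1(\Omega_\mu)$ because $|\mathcal{F}(s)|\le C(1+|s|^{p+1})$ and $p+1<2^*$. For $t>0$ the solution lies in $D(\mathcal{A})=X_{1+\delta}$ and is $C^1$ into $X_\delta$. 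Testing the equation with $u_t$ should then give $\frac{d}{dt}E(u(t))=-\|u_t\|_{L^2}^2\le 0$. Combined with continuity of $E$ at $t=0$ on $X_1$, this yields $E(u(t))\le E(u_0)$.

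The step I expect to be the main obstacle is justifying this identity: we only know $u_t\in X_\delta$, which is a negative-order space. I would handle it in two stages.
\begin{itemize}
\item First, use the smoothing of analytic semigroups (Henry, Thm.~3.5.2) to show that $u$ is Hölder continuous into $X_{1+\delta'}$ for $t>0$ and that $u_t\in X_{\delta'}$ for some $\delta'>\tfrac12$. This places $u_t$ in $L^2=X_{1/2}$, so the pairing with $\mathcal{A}u\in X_\delta$ and with $F(u)$ makes sense through the Gelfand triple.
\item If that fails, approximate instead by Galerkin or by regularized data and pass to the limit in the integrated inequality.
\end{itemize}

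Next I need a lower bound on $E$. From Hypothesis \ref{hyp:nonlinear_dissipativity}, $\tilde f(s)s\le -\frac{\beta}{2}s^2+C_0$. Integrating this gives
\begin{equation*}
\mathcal{F}(s)\le \frac{\beta}{4}\cdot 0+C_1,
\end{equation*}
that is, $\mathcal{F}(s)\le -\frac{\beta}{4}s^2+C_1$ once the contribution of small $|s|$ is absorbed into $C_1$. Hence $-\int\mathcal{F}(u)\ge -C_1|\Omega_\mu|$, and therefore
\begin{equation*}
\min\{\alpha,\beta/2\}\,\|u\|_{H^1}^2\le 2E(u_0)+2C_1|\Omega_\mu|.
\end{equation*}
This is in fact a bound on all of $[0,T_{\max})$, which is stronger than what is claimed.

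From the $H^1$ bound, Lemma \ref{lem:uniform_sobolev} bounds $\|u(t)\|_{L^{2n/(n-2)}}$. The Lipschitz/growth estimate from the proof of Theorem \ref{thm:local_existence}, together with the inclusion $L^{2n/(p(n-2))}\hookrightarrow L^{2n/(n+2-4\delta)}$ and the duality embedding \eqref{eq:duality_embedding}, then gives
\begin{equation*}
\|F(u)\|_{X_\delta}\le C\bigl(1+\|u\|_{L^q}^p\bigr).
\end{equation*}
That completes the argument.

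As a fallback that avoids the energy functional, I would test with $u$ instead. Dissipativity gives $\frac12\frac{d}{dt}\|u\|_{L^2}^2+a_\mu(u,u)\le \int(\tilde f(u)u)\le C_0|\Omega_\mu|$, which bounds $u$ in $L^2(0,T;H^1)$. The $H^1$ bound at each time would then come from the variation-of-constants formula plus a Gronwall-type singular inequality. Only bounds on bounded time intervals are needed, so growth in $t$ is acceptable.
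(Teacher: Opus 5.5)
Your argument is correct, but it takes a genuinely different route from the paper.

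\textbf{The paper's route.} The paper never introduces an energy functional. It tests the weak formulation with $|u|^{q-2}u$, $q=\frac{2n}{n-2}$ (justified by truncation, Remark \ref{rem:truncation}), and discards the nonnegative gradient term. It then uses dissipativity in the form $f(s)|s|^{q-2}s\le -C_1'|s|^q+C_2'$ and closes with a linear Gronwall inequality to get \eqref{eq:Gronwall_Lq}. The passage from $L^q$ to $X_\delta$ is exactly your last step. Remark \ref{rem:non_gradient_structure} says the Lyapunov approach was avoided on purpose.

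\textbf{What your route gains.} You obtain a bound in $H^1(\Omega_\mu)$, not just in $L^q$, and it is uniform on all of $[0,T_{\max})$. Global existence then follows at once from the blow-up alternative, without the Duhamel argument of Theorem \ref{thm:global_existence}. Your test function $u_t$ is also genuinely admissible once Henry's regularity theorem is applied, so no truncation is needed.

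\textbf{What it costs.} First, it relies on gradient structure: a symmetric $A$ and a time-independent nonlinearity of potential type. These do hold under Hypotheses \ref{hyp:nonlinear_growth} and \ref{hyp:elliptic_condition}, so the argument is legitimate here. However, they fail for the non-variational perturbations the paper wants to allow, and the $L^q$ test does not even use the symmetry of $A$. Second, your bound depends on $E(u_0)$ and is not dissipative. By contrast, \eqref{eq:Gronwall_Lq} relaxes to a level depending only on $|\Omega_\mu|$, and it is reused verbatim in Proposition \ref{prop:absorbing_Lq} to produce the $\mu$-uniform absorbing ball.

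\textbf{Small corrections.} In the regularity step, $u_t\in L^2$ alone does not make $\langle\mathcal{A}u,u_t\rangle$ meaningful, because $\mathcal{A}u\in X_\delta=(X_{1-\delta})'$. You need $u_t\in X_{\delta'}$ with $\delta'\ge 1-\delta$, not merely $\delta'>\frac12$. Henry's Theorem 3.5.2, applied in the shifted scale with base $X_\delta$, gives $u_t\in X_{\delta+\gamma}$ for every $\gamma<1$. Hence $u\in C^1((0,T_{\max});H^1(\Omega_\mu))$, and $\frac{d}{dt}E(u(t))=a_\mu(u,u_t)-\int_{\Omega_\mu}\tilde f(u)u_t\,dx=-\|u_t\|_{L^2}^2$ follows from $E$ being $C^1$ on $H^1$, which uses $p+1<2^*$.

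The display $\mathcal{F}(s)\le\frac{\beta}{4}\cdot 0+C_1$ is garbled. The constant $\frac{\beta}{4}$ in $-\frac{\beta}{4}s^2+C_1$ should be replaced by any smaller one, since the $C_0$ term contributes at large $|s|$, not small $|s|$. Neither point matters, because you only use $\sup_s\mathcal{F}(s)<\infty$, which is correct.

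Finally, your fallback would not close as sketched. $\|F(u)\|_{X_\delta}$ grows like $\|u\|_{H^1}^p$ with possibly $p>1$, and a singular Gronwall inequality cannot control such a superlinear term from an $L^2(0,T;H^1)$ bound alone. The main argument does not need the fallback.
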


\proof
From the uniform Sobolev embeddings (Lemma \ref{lem:uniform_sobolev}), we have $X_1 \hookrightarrow L^q(\Omega_{\mu})$ where $q = 2^{*} = \frac{2n}{n-2}$. To establish an a priori bound for the solutions in the $L^q(\Omega_{\mu})$ norm, we exploit the dissipativity of the nonlinearity. Since classical integration by parts is problematic on the non-smooth Jones domain $\Omega_\mu$, we evaluate the governing equation in its weak variational form. Testing the equation with $v = |u|^{q-2}u$, the abstract bilinear form associated with the strictly elliptic operator $\mathcal{A}$ yields the energy identity:
\begin{equation}\label{eq:Lq_energy_identity}
    \frac{1}{q} \frac{d}{dt} \|u\|_{L^q}^q + \frac{4(q-1)}{q^2} \int_{\Omega_{\mu}} A(x) \nabla \left(|u|^{q/2}\right) \cdot \nabla \left(|u|^{q/2}\right) \, dx = \int_{\Omega_{\mu}} f(u)|u|^{q-2}u \, dx.
\end{equation}

By the strict ellipticity condition (Hypothesis \ref{hyp:elliptic_condition}), the diffusion matrix $A$ is positive-definite, so the gradient term on the left-hand side of \eqref{eq:Lq_energy_identity} is non-negative. On the other hand, the dissipativity condition (Hypothesis \ref{hyp:nonlinear_dissipativity}) guarantees the existence of constants $C_1' > 0$ and $C_2' \ge 0$ such that $f(s)|s|^{q-2}s \le -C_1' |s|^q + C_2'$ for all $s \in \mathbb{R}$. Substituting this upper bound into \eqref{eq:Lq_energy_identity} and integrating over $\Omega_{\mu}$, we obtain the differential inequality:
\begin{equation}
    \frac{d}{dt} \|u\|_{L^q}^q \leq -q C_1' \|u\|_{L^q}^q + q C_2' |\Omega_{\mu}|.
\end{equation}

An application of Gronwall's inequality immediately yields the uniform estimate:
\begin{equation}\label{eq:Gronwall_Lq}
    \|u(t)\|_{L^q(\Omega_{\mu})}^q \le e^{-C_1' q t} \|u(0)\|_{L^q(\Omega_{\mu})}^q + \frac{C_2' |\Omega_{\mu}|}{C_1'} \left(1 - e^{-C_1' q t}\right).
\end{equation}

Next, we show that this uniform control in $L^q(\Omega_{\mu})$ implies a control for the nonlinearity in the shifted base space $X_\delta$. Recall from our duality analysis in the proof of Theorem \ref{thm:local_existence} that the continuous embedding
\begin{equation}
    L^{\frac{2n}{n+2-4\delta}}(\Omega_{\mu}) \hookrightarrow X_{\delta}
\end{equation}
holds for any sufficiently small $\delta > 0$. Under the polynomial growth bound inherited from Hypothesis \ref{hyp:nonlinear_growth}, the Nemytskii operator $F(u)$ maps $L^q(\Omega_{\mu})$ continuously into $L^{q/p}(\Omega_{\mu})$. Since $f$ is strictly subcritical ($p < \frac{n+2}{n-2}$), we can fix $\delta > 0$ small enough to satisfy the algebraic condition $p \leq \frac{n+2-4\delta}{n-2}$, which ensures the continuous inclusion $L^{q/p}(\Omega_{\mu}) \hookrightarrow L^{\frac{2n}{n+2-4\delta}}(\Omega_{\mu})$.

Consequently, the uniform $L^q(\Omega_{\mu})$ bound established in \eqref{eq:Gronwall_Lq} guarantees that the nonlinear term $F(u(t))$ remains uniformly bounded in $X_\delta$ on any bounded time interval. This intermediate control in the lower-regularity space is the critical ingredient required to prevent the blow-up of the phase-space norm of the solution.
\eproof

\begin{rem} \label{rem:truncation}
The equality in \eqref{eq:Lq_energy_identity} is, initially, obtained only formally, since the `test function' $|u|^{q-2}u$ may not belong to $H^1$. However, it may be made entirely rigorous via standard truncation arguments using, for instance, $v_M = \min\{|u|^{q-2}u, M\}$ as valid test functions in $X_1$ and passing to the limit as $M \to \infty$ by the Monotone Convergence Theorem. Keeping this in mind, the same formal procedure will be carried out in the sequel without further justification.  
\end{rem}

\begin{theorem}[Global Existence]\label{thm:global_existence}
Assume that $\Omega_{\mu}$ is a Jones domain and that the strict ellipticity condition (Hypothesis \ref{hyp:elliptic_condition}), the subcritical growth condition (Hypothesis \ref{hyp:nonlinear_growth}), and the dissipativity condition (Hypothesis \ref{hyp:nonlinear_dissipativity}) hold. Then, for any initial data $u_0 \in X_1$, the maximal time of existence is $T_{max} = \infty$.
\end{theorem}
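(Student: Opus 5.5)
By Theorem \ref{thm:local_existence}, if $T_{max}<\infty$ then $\|u(t)\|_{X_1}$ must blow up as $t\nearrow T_{max}$ (taking $\alpha=1$). I will argue by contradiction: assume $T_{max}<\infty$ and show that $\|u(t)\|_{X_1}$ stays bounded on $[0,T_{max})$. The tool is the variation of constants formula in the shifted base space $X_\delta$,
\begin{equation*}
u(t)=e^{-\mathcal{A}t}u_0+\int_0^t e^{-\mathcal{A}(t-s)}F(u(s))\,ds ,
\end{equation*}
together with the bound on $F(u(s))$ from Proposition \ref{prop:apriori_bounds}.

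\textbf{Step 1.} Since $u_0\in X_1\hookrightarrow L^{q}(\Omega_\mu)$ with $q=\frac{2n}{n-2}$ (Lemma \ref{lem:uniform_sobolev}), Proposition \ref{prop:apriori_bounds} and \eqref{eq:Gronwall_Lq} give
\begin{equation*}
M:=\sup_{0\le t<T_{max}}\|F(u(t))\|_{X_\delta}<\infty ,
\end{equation*}
because $[0,T_{max})$ is bounded.

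\textbf{Step 2.} Because $\mathcal{A}$ is sectorial and positive on $X_\delta$, with $X_1$ its fractional space of order $1-\delta<1$, we have the standard smoothing estimates (Henry \cite{Henry1981})
\begin{equation*}
\|e^{-\mathcal{A}t}v\|_{X_1}\le\|v\|_{X_1},\qquad \|e^{-\mathcal{A}t}w\|_{X_1}\le C_{1-\delta}\,t^{-(1-\delta)}\|w\|_{X_\delta}.
\end{equation*}
Applying them to the variation of constants formula yields
\begin{equation*}
\|u(t)\|_{X_1}\le\|u_0\|_{X_1}+C_{1-\delta}\,M\int_0^t (t-s)^{\delta-1}\,ds\le \|u_0\|_{X_1}+\frac{C_{1-\delta}M}{\delta}\,T_{max}^{\delta}.
\end{equation*}
The right-hand side is finite, which contradicts the blow-up alternative. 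Hence $T_{max}=\infty$.

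\textbf{Main obstacle.} The delicate point is Step 1, namely justifying the $L^q$ energy estimate behind Proposition \ref{prop:apriori_bounds} on a nonsmooth domain. This requires the truncation argument of Remark \ref{rem:truncation}, and also checking that the solution is regular enough for $t>0$ (it lies in $X_{1+\delta}$ and $u_t\in X_\delta$) so that the identity \eqref{eq:Lq_energy_identity} holds for a.e.\ $t$ and can be integrated. I would additionally verify that the dissipativity hypothesis yields $f(s)|s|^{q-2}s\le -C_1'|s|^q+C_2'$: for large $|s|$ the term $-\beta|s|^q$ dominates, and the bounded range of $s$ contributes only a constant. With these points in place, Step 2 is routine: it needs only $\delta>0$, which makes the singularity $(t-s)^{\delta-1}$ integrable.
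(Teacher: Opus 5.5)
Your proposal is correct and follows the paper's proof essentially step for step: you argue by contradiction with the blow-up alternative, use Duhamel's formula in the base space $X_\delta$, apply the $X_\delta \to X_1$ smoothing estimate with the integrable singularity $(t-s)^{\delta-1}$, and take the bound on $\|F(u(s))\|_{X_\delta}$ from Proposition~\ref{prop:apriori_bounds} (with the same deferral to the truncation remark for the $L^q$ energy identity). The only cosmetic difference is that you bound the linear term with the contraction estimate $\|e^{-\mathcal{A}t}\|_{\mathcal{L}(X_1)}\le 1$, valid for the positive self-adjoint $\mathcal{A}$, where the paper uses a generic constant $\tilde{M}$.
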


\proof
Assume, for the sake of contradiction, that $T_{max} < \infty$. According to the blow-up alternative established in Theorem \ref{thm:local_existence}, this implies that $\limsup_{t \nearrow T_{max}} \|u(t)\|_{X_1} = \infty$.

Using the variation of constants formula in the shifted fractional scale with base space $X_\delta$, we express the solution as:
\begin{equation}\label{eq:var_constants_Xdelta}
    u(t) = e^{-\mathcal{A} t} u_0 + \int_0^t e^{-\mathcal{A}(t-s)} F(u(s)) \, ds.
\end{equation}

Taking the $X_1$ norm on both sides of \eqref{eq:var_constants_Xdelta}, we estimate the linear and integral terms separately. Since $\mathcal{A}$ generates an analytic semigroup, it is strongly continuous on $X_1$, yielding $\|e^{-\mathcal{A} t} u_0\|_{X_1} \leq \tilde{M} \|u_0\|_{X_1}$ for some constant $\tilde{M} > 0$ defined over the finite interval $[0, T_{max})$.

For the integral term, we exploit the standard smoothing estimates of analytic semigroups acting from the base space $X_\delta$ into the fractional space $X_1$. Since the difference between these fractional indices is exactly $1 - \delta$, there exists a constant $M_\delta > 0$ such that:
\begin{align}
    \int_0^t \|e^{-\mathcal{A}(t-s)} F(u(s))\|_{X_1} \, ds 
    &\leq \int_0^t M_\delta (t-s)^{-(1-\delta)} \|F(u(s))\|_{X_\delta} \, ds \nonumber \\
    &\leq M_\delta K \int_0^t (t-s)^{-(1-\delta)} \, ds,
\end{align}
where $K = \sup_{s \in [0, T_{max})} \|F(u(s))\|_{X_\delta} < \infty$, courtesy of Proposition \ref{prop:apriori_bounds}. Because $\delta > 0$, the exponent $1-\delta$ is strictly less than $1$, ensuring that the weakly singular integral converges. Explicit evaluation yields the a priori estimate:
\begin{equation}\label{eq:X1_uniform_bound}
    \|u(t)\|_{X_1} \leq \tilde{M} \|u_0\|_{X_1} + \frac{M_\delta K}{\delta} t^\delta.
\end{equation}

Taking the limit as $t \nearrow T_{max} < \infty$, the right-hand side of \eqref{eq:X1_uniform_bound} remains strictly bounded. This shows that $\|u(t)\|_{X_1}$ is bounded on $[0, T_{max})$, which directly contradicts the blow-up alternative. Thus, we must have $T_{max} = \infty$.
\eproof

\begin{rem}\label{rem:non_gradient_structure}
Although global existence for parabolic equations of this type is frequently established using a Lyapunov functional—relying heavily on the gradient structure of the system to obtain a priori energy bounds—we have deliberately avoided this approach. The proof of Theorem \ref{thm:global_existence} relies exclusively on the dissipativity condition in $L^q(\Omega_\mu)$ and the regularizing properties of the analytic semigroup in fractional power spaces. Consequently, this methodology does not require the underlying system to possess a gradient structure, rendering the global existence result applicable to a strictly broader class of dissipative non-gradient systems, including those subject to non-variational perturbations or external forcing.
\end{rem}

\section{Existence of the Global Attractor}

\subsection{Existence of a Bounded Absorbing Ball}

Our first step in analyzing the long-time behavior of the system is to establish the existence of an absorbing set in the Lebesgue space ${L^q}({\Omega}_{\mu})$, where $q = \frac{2n}{n-2}$ is the critical Sobolev exponent. We denote by $|\Omega|$ the Lebesgue measure of the domain $\Omega$ and say that the family $\{\Omega_{\mu}\}_{\mu \in\Lambda}$ has uniformly bounded volume if $\sup_{\mu \in\Lambda} |\Omega_{\mu}| \leq V_0$, for some constant $V_0$.

\begin{prop}[Absorbing Set in $L^q$] \label{prop:absorbing_Lq} 
Suppose that $\{\Omega_{\mu}\}_{\mu \in \Lambda}$ is a uniform Jones family of domains with uniformly bounded volume. Also, suppose the strict ellipticity condition (Hypothesis \ref{hyp:elliptic_condition}) on the matrix $A$, the subcritical growth condition (Hypothesis \ref{hyp:nonlinear_growth}), and the dissipativity condition (Hypothesis \ref{hyp:nonlinear_dissipativity}) hold. Let $B \subset X_1 = H^1(\Omega_\mu)$ be an arbitrary bounded set, such that $\|u_0\|_{X_1} \leq M$ for all $u_0 \in B$. Then, there exists a constant $\rho_{L^q} > 0$, independent of the initial data and the parameter $\mu$, and a time $t_0 = t_0(M) \geq 0$ such that the global solution $u(t)$ satisfies:
\begin{equation} \label{eq:absorbing_Lq_final}
    \|u(t)\|_{L^q(\Omega_\mu)} \leq \rho_{L^q}, \quad \forall t \geq t_0.
\end{equation}
\end{prop}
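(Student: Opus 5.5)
The plan is to reuse the $L^q$ energy identity \eqref{eq:Lq_energy_identity} from the proof of Proposition \ref{prop:apriori_bounds}, but now to track every constant so that it depends only on $\beta$, $C_0$, $q$ and the volume bound $V_0$, and never on $\mu$ or on the initial data. We test the equation with $|u|^{q-2}u$, made rigorous by truncation as in Remark \ref{rem:truncation}. The gradient term is nonnegative by Hypothesis \ref{hyp:elliptic_condition}, so it can be dropped, leaving
\[
\frac{1}{q}\frac{d}{dt}\|u\|_{L^q}^q \le \int_{\Omega_\mu} f(u)|u|^{q-2}u\,dx .
\]

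The first key step is a pointwise inequality with explicit constants. From Hypothesis \ref{hyp:nonlinear_dissipativity} we have $f(s)s\,|s|^{q-2}\le -\beta|s|^q + C_0|s|^{q-2}$. Young's inequality with exponents $q/(q-2)$ and $q/2$ then gives $C_0|s|^{q-2}\le \frac{\beta}{2}|s|^q + C(\beta,C_0,q)$. Hence
\[
f(s)|s|^{q-2}s\le -\tfrac{\beta}{2}|s|^q + C_2',
\]
where $C_2'$ depends only on $\beta$, $C_0$ and $n$. Integrating over $\Omega_\mu$ and using $|\Omega_\mu|\le V_0$ yields
\[
\frac{d}{dt}y\le -\frac{q\beta}{2}\,y + qC_2'V_0,\qquad y(t)=\|u(t)\|_{L^q}^q .
\]
Gronwall's inequality then gives
\[
y(t)\le e^{-q\beta t/2}\,y(0) + \frac{2C_2'V_0}{\beta}.
\]

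The second step controls $y(0)$ and chooses the absorbing time. By the uniform Sobolev embedding (Lemma \ref{lem:uniform_sobolev}) we have $y(0)\le C_S^q\|u_0\|_{X_1}^q\le C_S^qM^q$, with $C_S$ independent of $\mu$. We set
\[
\rho_{L^q}^q = 1+\frac{2C_2'V_0}{\beta},
\]
and choose $t_0$ so that $e^{-q\beta t_0/2}C_S^qM^q\le 1$, namely
\[
t_0=\max\Bigl\{0,\tfrac{2}{q\beta}\log(C_S^qM^q)\Bigr\}.
\]
With these choices $\|u(t)\|_{L^q}\le\rho_{L^q}$ for all $t\ge t_0$, and both $\rho_{L^q}$ and $t_0$ are independent of $\mu$.

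The main point requiring care is rigor rather than computation. For $u_0\in H^1$ we only know that the solution is in $X_{1+\delta}$ for $t>0$, so the differential inequality should be derived on $(\varepsilon,T)$ and then passed to $\varepsilon\to0$ using continuity of $u$ into $H^1\hookrightarrow L^q$. The truncated test functions must lie in $H^1$, and to justify this we would use the chain rule for Lipschitz truncations on $H^1(\Omega_\mu)$, which needs no boundary regularity. With the Neumann form, no boundary term appears at all. This is where the Jones hypothesis enters only through $C_S$.
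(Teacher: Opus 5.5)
Your proposal is correct and follows essentially the same route as the paper. You drop the nonnegative diffusion term in the $L^q$ energy identity, use dissipativity plus Young's inequality and $|\Omega_\mu|\le V_0$ to get a linear differential inequality with $\mu$-independent constants, solve it by Gronwall, bound $\|u_0\|_{L^q}\le C_S M$ via the uniform Sobolev embedding, and choose $t_0$ so that the transient term is dominated. The differences are minor and work in your favor: you make $C_1'=\beta/2$ and $C_2'$ explicit, and your choice $\rho_{L^q}^q = 1 + 2C_2'V_0/\beta$ keeps $\rho_{L^q}>0$ and $t_0$ well defined even when $C_2'=0$, whereas the paper's formulas for $\rho_{L^q}$ and $t_0$ break down in that case.
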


\proof
By the uniform Sobolev embeddings (Lemma \ref{lem:uniform_sobolev}), the embedding $X_1 \hookrightarrow L^q(\Omega_\mu)$ is continuous with a uniform constant $C_S > 0$. Consequently, the initial data satisfies the uniform bound:
\begin{equation} \label{eq:initial_lq_bound}
    \|u_0\|_{L^q} \leq C_S \|u_0\|_{X_1} \leq C_S M, \quad \forall u_0 \in B.
\end{equation}

Recall that in Proposition \ref{prop:apriori_bounds}, by testing the equation in its weak formulation, we established the explicit a priori estimate \eqref{eq:Gronwall_Lq}:
\begin{equation*} 
    \|u(t)\|_{L^q(\Omega_{\mu})}^q \le e^{-C_1' q t} \|u_0\|_{L^q(\Omega_{\mu})}^q + \frac{C_2' |\Omega_{\mu}|}{C_1'} \left(1 - e^{-C_1' q t}\right),
\end{equation*}
where $C_1' > 0$ and $C_2' \geq 0$ are the structural constants from the dissipativity condition.

Using the uniform volume bound $|\Omega_\mu| \leq V_0$ and the initial data bound \eqref{eq:initial_lq_bound}, this inequality yields:
\begin{equation} \label{eq:Gronwall_solved_Lq}
    \|u(t)\|_{L^q}^q \leq (C_S M)^q e^{-C_1' q t} + \frac{C_2' V_0}{C_1'}.
\end{equation}

We now define the uniform radius of the absorbing ball in $L^q(\Omega_\mu)$ as:
\begin{equation} \label{eq:radius_Lq}
    \rho_{L^q} = \left( \frac{2 C_2' V_0}{C_1'} \right)^{\frac{1}{q}}.
\end{equation}
Crucially, the radius $\rho_{L^q}$ depends solely on the uniform maximum volume $V_0$ and the structural constants of the equation, being completely independent of the initial data and of the perturbation parameter $\mu$. 

For the given bounded set $B \subset X_1$, we define the uniform entering time $t_0 = t_0(M) \geq 0$ by determining when the transient term becomes smaller than half of the asymptotic bound:
\begin{equation} \label{eq:entering_time}
    t_0 = \max \left\{ 0, \, \frac{1}{q C_1'} \ln \left( \frac{C_1' (C_S M)^q}{C_2' V_0} \right) \right\}.
\end{equation}
It follows directly from \eqref{eq:Gronwall_solved_Lq} and \eqref{eq:entering_time} that for all $t \geq t_0$, $\|u(t)\|_{L^q} \leq \rho_{L^q}$. This completes the proof.
\eproof

\subsection{Regularization in the Extended Fractional Scale and Global Attractor}

We now use the uniform $L^q$-bound and the regularizing properties of the flow to construct an absorbing set in the phase space $X_1 = H^1(\Omega_\mu)$. To utilize the fractional powers rigorously, we return to the abstract formulation $u_t + \mathcal{A}u = F(u)$, where $\mathcal{A}$ is the strictly positive shifted operator.

\begin{prop}[Absorbing Ball in $X_1$] \label{prop:absorbing_X1}
Under the assumptions of Proposition \ref{prop:absorbing_Lq}, there exists a constant $\rho_{H^1} > 0$, independent of the initial data and the parameter $\mu$, such that the global solution $u(t)$ satisfies:
\begin{equation} \label{eq:absorbing_H1_final}
    \|u(t)\|_{X_1} \leq \rho_{H^1}, \quad \forall t \geq t_0 + 1,
\end{equation}
where $t_0$ is the entering time defined in \eqref{eq:entering_time}. Consequently, the dynamical system admits a bounded absorbing ball $\mathcal{B}_{H^1} \subset X_1$, with a radius independent of the parameter $\mu$.
\end{prop}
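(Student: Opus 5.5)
We would establish the $X_1$ bound by a smoothing argument on the variation of constants formula, started at time $t_0$ from the $L^q$ absorbing bound of Proposition \ref{prop:absorbing_Lq}. Fix $t \ge t_0+1$ and put $\tau = t-1 \ge t_0$. We write
\begin{equation*}
u(t) = e^{-\mathcal{A}}u(\tau) + \int_\tau^{t} e^{-\mathcal{A}(t-s)}F(u(s))\,ds ,
\end{equation*}
with the semigroup acting on the base space $X_\delta$, exactly as in the proof of Theorem \ref{thm:global_existence}.

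For the integral term we proceed as in Theorem \ref{thm:global_existence}. For $s \ge t_0$ we have $\|u(s)\|_{L^q}\le \rho_{L^q}$. The growth bound $|f(s)|\le C'(1+|s|^p)$, together with the uniform volume bound $|\Omega_\mu|\le V_0$ and Hölder's inequality, then gives $\|F(u(s))\|_{L^{q/p}} \le C(1+\rho_{L^q}^p)$, with $C$ depending only on $V_0$. The inclusion $L^{q/p}\hookrightarrow L^{\frac{2n}{n+2-4\delta}}\hookrightarrow X_\delta$ yields $\|F(u(s))\|_{X_\delta}\le K_0$ uniformly in $s\ge t_0$ and in $\mu$. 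The smoothing estimate $\|e^{-\mathcal{A}r}\|_{\mathcal{L}(X_\delta,X_1)}\le M_\delta r^{-(1-\delta)}e^{-ar}$ then bounds the integral by $M_\delta K_0/\delta$.

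The linear term needs more care, because it must be controlled using only the $L^q$ bound on $u(\tau)$ and not the $H^1$ norm of the initial data. We would use the embedding $L^q \hookrightarrow L^2 = X_{1/2}$, whose constant $V_0^{1/n}$ depends only on the volume. We then apply the smoothing estimate from $X_{1/2}$ to $X_1$ at time $1$, which gives $\|e^{-\mathcal{A}}u(\tau)\|_{X_1}\le M_{1/2}V_0^{1/n}\rho_{L^q}$. Putting the two terms together yields $\rho_{H^1} = M_{1/2}V_0^{1/n}\rho_{L^q} + M_\delta K_0/\delta$. The unit waiting time is exactly what allows us to trade the initial norm for the smoothing factor $1^{-1/2}$.

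The main obstacle is showing that the constants $M_\delta$ and $M_{1/2}$ in the smoothing estimates, and the constants of the duality embedding $L^{\frac{2n}{n+2-4\delta}}\hookrightarrow X_\delta$, are independent of $\mu$.

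For the smoothing constants we would use self-adjointness. Since $\mathcal{A}_\mu$ is positive and self-adjoint, with spectrum in $[\beta/2,\infty)$, the spectral theorem gives $\|\mathcal{A}^\gamma e^{-\mathcal{A}r}\|\le \sup_{\lambda\ge \beta/2}\lambda^\gamma e^{-\lambda r}\le (\gamma/e)^\gamma r^{-\gamma}$. This bound is universal, and the fractional norms are defined through the same operator, so the constants are $\mu$-independent.

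For the embedding, we would use that $X_{1/2}$ and $X_1$ carry norms equivalent to the $L^2$ and $H^1$ norms, with constants depending only on $\alpha$, $\|A\|_\infty$ and $\beta$. We would then obtain the embedding $X_{1-\delta}\hookrightarrow L^{\frac{2n}{n-2+4\delta}}$ with a uniform constant by interpolating, through the Gagliardo–Nirenberg/Hölder interpolation inequality $\|v\|_{L^r}\le\|v\|_{L^2}^{1-\theta}\|v\|_{L^{2^*}}^{\theta}$, between $L^2$ and the uniform Sobolev embedding of Lemma \ref{lem:uniform_sobolev}. The moment inequality $\|v\|_{X_{1-\delta}}\ge c\|v\|_{X_{1/2}}^{2\delta}\|v\|_{X_1}^{1-2\delta}$ does not directly give this, so we would instead use complex interpolation. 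The identification $X_{1-\delta}=[X_{1/2},X_1]_{1-2\delta}$ holds with equal norms, because both spaces are defined spectrally. Complex interpolation of the bounded maps $X_{1/2}\to L^2$ and $X_1\to L^{2^*}$, the latter uniform by Lemma \ref{lem:uniform_sobolev}, then gives a uniform bound into $L^{r}$. Dualizing finishes the argument.
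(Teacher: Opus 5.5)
Your proposal is correct and follows the paper's proof. You use Duhamel's formula from $t-1$. You handle the linear term by $L^q\hookrightarrow L^2=X_{1/2}$ (your constant $V_0^{1/n}$ is exactly the paper's $V_0^{\frac{q-2}{2q}}$) followed by $X_{1/2}\to X_1$ smoothing at time $1$. You handle the integral term by the uniform $X_\delta$ bound on $F(u(s))$ for $s\ge t_0$ and the integrable singularity $(t-s)^{-(1-\delta)}$.

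Your spectral-theorem and complex-interpolation arguments for the $\mu$-independence of $M_{1/2}$, $M_\delta$ and of $L^{\frac{2n}{n+2-4\delta}}\hookrightarrow X_\delta$ are sound and add rigor the paper lacks. The paper simply asserts these, identifying $X_{1-\delta}$ with $H^{1-2\delta}(\Omega_\mu)$ and citing the integer-order Lemma \ref{lem:uniform_sobolev}. One slip: the moment inequality runs the other way, $\|v\|_{X_{1-\delta}}\le \|v\|_{X_{1/2}}^{2\delta}\|v\|_{X_1}^{1-2\delta}$, and that upper-bound direction is exactly why it cannot yield the embedding.
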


\proof
Due to the subcritical growth condition assumed on the nonlinearity $f$, we demonstrated in Proposition \ref{prop:apriori_bounds} that $F(u)$ maps $L^q(\Omega_\mu)$ continuously into the shifted base space $X_\delta$, for a sufficiently small $\delta > 0$. The uniform bound \eqref{eq:absorbing_Lq_final} thus implies that the Nemytskii operator maps the $L^q$-absorbing ball into a uniformly bounded set of $X_\delta$. Specifically, there exists a constant $\Lambda = \Lambda(\rho_{L^q}) > 0$, independent of $\mu$, such that:
\begin{equation} \label{eq:source_bounded}
    \|F(u(t))\|_{X_\delta} \leq \Lambda, \quad \forall t \geq t_0.
\end{equation}

For any $t \geq t_0 + 1$, we invoke the variation of constants formula starting from the point $t-1$:
\begin{equation} \label{eq:duhamel}
    u(t) = e^{-\mathcal{A}} u(t-1) + \int_{t-1}^t e^{-\mathcal{A}(t-s)} F(u(s)) \,ds.
\end{equation}

We compute the norm of \eqref{eq:duhamel} in the phase space $X_1$. For the linear term, since $u(t-1)$ belongs to the $L^q$ absorbing ball and $q = \frac{2n}{n-2} > 2$, we apply Hölder's inequality and the uniform volume bound $|\Omega_\mu| \leq V_0$ to bound it in the pivot space $L^2(\Omega_\mu) \equiv X_{1/2}$:
\begin{equation*}
    \|u(t-1)\|_{X_{1/2}} = \|u(t-1)\|_{L^2} \leq |\Omega_\mu|^{\frac{q-2}{2q}} \|u(t-1)\|_{L^q} \leq V_0^{\frac{q-2}{2q}} \rho_{L^q}.
\end{equation*}
Applying the smoothing property of the analytic semigroup from $X_{1/2}$ to $X_1$, with uniform constant $M_{1/2}$, we have:
\begin{equation*}
    \|e^{-\mathcal{A}}u(t-1)\|_{X_1} \leq M_{1/2} (1)^{-1/2} \|u(t-1)\|_{X_{1/2}} \leq M_{1/2} V_0^{\frac{q-2}{2q}} \rho_{L^q} =: C_2 \rho_{L^q}.
\end{equation*}

For the integral term, since the source is uniformly bounded in $X_\delta$ and the fractional difference is $1-\delta < 1$, the singularity at $s=t$ is integrable:
\begin{align*}
    \int_{t-1}^t \|e^{-\mathcal{A}(t-s)} F(u(s))\|_{X_1} \,ds 
    &\leq \int_{t-1}^t M_\delta (t-s)^{-(1-\delta)} \|F(u(s))\|_{X_\delta} \,ds \\
    &\leq M_\delta \Lambda \int_{t-1}^t (t-s)^{-(1-\delta)} \,ds = \frac{M_\delta \Lambda}{\delta}.
\end{align*}

Combining both estimates, we arrive at a uniform bound in the phase space for all large times:
\begin{equation} \label{eq:absorbing_H1}
    \|u(t)\|_{X_1} \leq C_2 \rho_{L^q} + \frac{M_\delta \Lambda}{\delta} =: \rho_{H^1}, \quad \forall t \geq t_0 + 1.
\end{equation}
Since all constants involved are independent of $\mu$, the radius $\rho_{H^1}$ is uniform, guaranteeing the existence of the uniform bounded absorbing ball $\mathcal{B}_{H^1} \subset X_1$.
\eproof

We summarize the long-time dynamical outcome in the following main result.

\begin{theorem}[Existence of the Global Attractor]\label{thm:global_attractor} 
Assume that $\{\Omega_{\mu}\}_{\mu \in \Lambda}$ is a uniform Jones family of domains with uniformly bounded volume, and that the strict ellipticity condition (Hypothesis \ref{hyp:elliptic_condition}), the subcritical growth condition (Hypothesis \ref{hyp:nonlinear_growth}), and the dissipativity condition (Hypothesis \ref{hyp:nonlinear_dissipativity}) hold. Then the dynamical system $\{S_\mu(t)\}_{t \geq 0}$ generated by the problem on $X_1 = H^1(\Omega_\mu)$ possesses a unique global attractor $\mathcal{A}_\mu$. The family of attractors $\{\mathcal{A}_\mu\}_{\mu \in \Lambda}$ is uniformly bounded in $H^1(\Omega_{\mu})$. 
\end{theorem}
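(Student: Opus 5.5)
We will apply the classical abstract criterion for the existence of global attractors (Hale, Temam, Ladyzhenskaya): a continuous semigroup $\{S_\mu(t)\}_{t\ge 0}$ on a Banach space has a unique global attractor if it possesses a bounded absorbing set and is asymptotically compact, or more simply if there is a compact absorbing set. Continuity of $S_\mu(t)$ on $X_1$ and the semigroup property follow from Theorems \ref{thm:local_existence} and \ref{thm:global_existence} (global well-posedness with continuous dependence on $u_0\in X_1$). The absorbing ball $\mathcal{B}_{H^1}$ comes from Proposition \ref{prop:absorbing_X1}, with radius $\rho_{H^1}$ independent of $\mu$. The argument therefore reduces to producing, for each fixed $\mu$, a compact absorbing set.

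For compactness, we will show that $S_\mu(1)\mathcal{B}_{H^1}$, and hence $S_\mu(t)\mathcal{B}_{H^1}$ for $t$ large, is bounded in $X_{1+\delta'}$ for some $0<\delta'<\delta$. We repeat the Duhamel computation of Proposition \ref{prop:absorbing_X1}, now measuring in $X_{1+\delta'}$. For the linear part, analyticity gives $\|e^{-\mathcal{A}}u(t-1)\|_{X_{1+\delta'}}\le M\|u(t-1)\|_{X_{1/2}}$. For the integral part, $\|e^{-\mathcal{A}(t-s)}F(u(s))\|_{X_{1+\delta'}}\le M(t-s)^{-(1+\delta'-\delta)}\|F(u(s))\|_{X_\delta}$, and this is integrable because $1+\delta'-\delta<1$. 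The bound \eqref{eq:source_bounded} on $F(u)$ in $X_\delta$ then gives a uniform bound in $X_{1+\delta'}$ for all $t\ge t_0+1$. Since the embedding $X_{1+\delta'}=Y_{1/2+\delta'}\hookrightarrow X_1$ is compact (as established in Section 3 via Rellich–Kondrachov on the Jones domain), the closure in $X_1$ of $\bigcup_{t\ge t_1}S_\mu(t)\mathcal{B}_{H^1}$ is a compact absorbing set. The attractor is then $\mathcal{A}_\mu=\omega(\mathcal{B}_{H^1})$, and uniqueness is standard because any two attractors attract each other.

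Uniform boundedness is immediate. Since $\mathcal{A}_\mu$ is invariant and $\mathcal{B}_{H^1}$ absorbs the bounded set $\mathcal{A}_\mu$, we have $\mathcal{A}_\mu=S_\mu(t)\mathcal{A}_\mu\subset\mathcal{B}_{H^1}$ for large $t$. Hence $\sup_{\mu}\sup_{u\in\mathcal{A}_\mu}\|u\|_{H^1(\Omega_\mu)}\le\rho_{H^1}$, and this bound is independent of $\mu$ by Proposition \ref{prop:absorbing_X1}.

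The main obstacle is the compactness step. It requires the fractional space $X_{1+\delta'}$ to embed compactly into $H^1(\Omega_\mu)$, which rests on the Rellich–Kondrachov compactness of $H^1(\Omega_\mu)\hookrightarrow L^2(\Omega_\mu)$ for a bounded Jones domain. We will justify that compactness through the extension operator of Theorem \ref{thm:extension}: extend to $H^1(\mathbb{R}^n)$, cut off on a ball containing $\Omega_\mu$, and apply classical Rellich on the ball. A second point to check is that the bound in $X_{1+\delta'}$ holds in the $\mu$-fixed scale. Compactness is only needed for each fixed $\mu$, so the constants in this step may depend on $\mu$ without affecting the uniform $H^1$ bound.
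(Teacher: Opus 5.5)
Your proposal is correct and follows essentially the same route as the paper. Both run a Duhamel estimate on $[t-1,t]$ in $X_{1+\epsilon}$ with $\epsilon<\delta$, so that $(t-s)^{-(1+\epsilon-\delta)}$ is integrable, then use the compact embedding $X_{1+\epsilon}\hookrightarrow X_1$ and take $\mathcal{A}_\mu=\omega(\mathcal{B}_{H^1})$, with the uniform $H^1$ bound inherited from Proposition \ref{prop:absorbing_X1}. Your extra details fill in points the paper only asserts: you justify Rellich--Kondrachov on a bounded Jones domain via the extension operator, and you derive $\mathcal{A}_\mu\subset\mathcal{B}_{H^1}$ from invariance.
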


\proof
To conclude the existence of the global attractor, we shift the Duhamel integration interval slightly further to gain higher regularity, exploiting the asymptotic compactness of the system. By estimating $u(t)$ in $X_{1+\epsilon}$ for a sufficiently small $\epsilon > 0$ (such that $1+\epsilon - \delta < 1$), an identical smoothing argument applied on the interval $[t-1, t]$ starting from $t \geq t_0 + 2$ proves that the trajectory enters a uniformly bounded set of $X_{1+\epsilon}$. 

Since the fractional power space $X_{1+\epsilon}$ is compactly embedded into $X_1$, the nonlinear semigroup $\{S_\mu(t)\}_{t \geq 0}$ is completely continuous for $t > 0$. Therefore, by the standard theory of infinite-dimensional dissipative dynamical systems, the existence of the absorbing set $\mathcal{B}_{H^1}$ combined with the asymptotic compactness ensures that the system possesses a unique, compact, and connected global attractor $\mathcal{A}_\mu \subset X_1$, defined as the $\omega$-limit set of the absorbing ball:
\begin{equation*}
    \mathcal{A}_\mu = \omega(\mathcal{B}_{H^1}) = \bigcap_{\tau \geq 0} \overline{\bigcup_{t \geq \tau} S_\mu(t)\mathcal{B}_{H^1}}^{X_1}.
\end{equation*}
Since the absorbing balls are uniformly bounded in $H^1(\Omega_{\mu})$ by Proposition \ref{prop:absorbing_X1}, the result follows.
\eproof

 \section{Uniform Boundedness of the Attractors}
\label{sec:uniform_linfty}

Having established the existence of the global attractors $\mathcal{A}_\mu$ in $X_1$ and a uniformly bounded absorbing ball in $L^q(\Omega_\mu)$ (where $q=\frac{2n}{n-2}$), we now show that bounded sets in $L^q(\Omega_\mu)$ are absorbed into a uniformly bounded set in $L^\infty(\Omega_\mu)$ in finite time. 

\textbf{Standing Assumptions for Section \ref{sec:uniform_linfty}:} To avoid repetition, throughout this section we assume that the family of domains $\{\Omega_\mu\}_{\mu \in \Lambda}$ is a uniform Jones family with uniformly bounded volume, satisfying:
\begin{equation}\label{eq:uniform_measure_bound2}
    \sup_{\mu \in \Lambda} |\Omega_\mu| \leq V_0 < \infty.
\end{equation}
Furthermore, we assume that the strict ellipticity condition (Hypothesis \ref{hyp:elliptic_condition}), the subcritical growth condition (Hypothesis \ref{hyp:nonlinear_growth}), and the dissipativity condition (Hypothesis \ref{hyp:nonlinear_dissipativity}) all hold. Whenever we refer to $u(t)$, we mean the global weak solution to the initial-boundary value problem associated with the perturbed equation, generated by an initial datum in the $L^q$-absorbing ball.

This global setup ensures that the absorbing sets obtained in previous sections possess uniform radii. Consequently, there exist a uniform entering time $t_0 > 0$ and a constant $C_0 > 0$, both strictly independent of $\mu$, such that any trajectory $u(t)$ satisfies:
\begin{equation}\label{eq:uniform_Lq}
    \sup_{t \geq t_0} \|u(t)\|_{L^q(\Omega_\mu)} \leq C_0.
\end{equation}

To establish the uniform $L^\infty$ bound, we employ a Moser-Alikakos iterative procedure combined with the Uniform Gronwall Lemma. We first establish a fundamental energy estimate in $L^p$.

\begin{lemma}[$L^p$ Energy Estimate] \label{lem:Lp_energy}
Under the standing assumptions of this section, for any $p \geq q$, there exists a constant $K > 0$, independent of $\mu$ and $p$, such that the global weak solution $u(t)$ satisfies the differential inequality:
\begin{equation}\label{eq:energy_estimate}
    \frac{d}{dt} \int_{\Omega_\mu} |u(t)|^p \, dx + 2\alpha \int_{\Omega_\mu} \left| \nabla \left(|u(t)|^{p/2}\right) \right|^2 \, dx \leq p K V_0.
\end{equation}
\end{lemma}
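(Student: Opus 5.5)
The plan is to test the equation with $|u|^{p-2}u$, exactly as in Proposition \ref{prop:apriori_bounds}, and to justify this by the truncation argument of Remark \ref{rem:truncation}. In the shifted formulation $u_t+\mathcal{A}_\mu u=\tilde f(u)$, the two terms $\frac{\beta}{2}u$ cancel. This leaves the weak identity
\begin{equation*}
\frac{1}{p}\frac{d}{dt}\int_{\Omega_\mu}|u|^p\,dx+\frac{4(p-1)}{p^2}\int_{\Omega_\mu}A(x)\nabla\bigl(|u|^{p/2}\bigr)\cdot\nabla\bigl(|u|^{p/2}\bigr)\,dx=\int_{\Omega_\mu}f(u)|u|^{p-2}u\,dx .
\end{equation*}
Multiply by $p$ and apply Hypothesis \ref{hyp:elliptic_condition}. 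Since $p\ge q>2$ we have $\frac{4(p-1)}{p}\ge 2$, so the diffusion term is bounded below by $2\alpha\int|\nabla(|u|^{p/2})|^2$. That is precisely the left-hand side of \eqref{eq:energy_estimate}.

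For the reaction term, Hypothesis \ref{hyp:nonlinear_dissipativity} gives pointwise
\begin{equation*}
f(s)s|s|^{p-2}\le g_p(s):=-\beta|s|^p+C_0|s|^{p-2}.
\end{equation*}
Elementary calculus shows that $g_p$ attains its maximum where $|s|^2=\frac{C_0(p-2)}{\beta p}$. The maximum value is
\begin{equation*}
\frac{2C_0}{p}\Bigl(\frac{C_0(p-2)}{\beta p}\Bigr)^{\frac{p-2}{2}}\le \frac{2C_0}{p}\Bigl(\frac{C_0}{\beta}\Bigr)^{\frac{p-2}{2}}.
\end{equation*}
Integrating over $\Omega_\mu$ and using $|\Omega_\mu|\le V_0$ from \eqref{eq:uniform_measure_bound2} then gives \eqref{eq:energy_estimate} with a constant controlled by $\max\{1,C_0/\beta\}^{(p-2)/2}$. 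This constant is clearly independent of $\mu$.

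The main obstacle is independence of $p$. When $C_0\le\beta$ the bound above is at most $2C_0/p\le C_0$, so $K=C_0$ works for every $p\ge q$.

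To reduce to that case I would use a normalization by scaling $u=\lambda v$ with a fixed $\lambda\ge\sqrt{C_0/\beta}$, $\lambda\ge1$. The function $v$ solves the same problem with $f$ replaced by $f_\lambda(s)=f(\lambda s)/\lambda$. This nonlinearity still satisfies Hypothesis \ref{hyp:nonlinear_growth}, with the same exponent and a $\lambda$-dependent constant. It also satisfies Hypothesis \ref{hyp:nonlinear_dissipativity} with the same $\beta$ and with $C_0$ replaced by $C_0/\lambda^2\le\beta$. The uniform $L^q$ bound \eqref{eq:uniform_Lq} transfers to $v$ with $C_0$ divided by $\lambda$. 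Every $L^\infty$ conclusion for $v$ transfers back to $u$ after multiplication by the fixed factor $\lambda$, which depends on neither $\mu$ nor $p$.

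Hence, without loss of generality, $C_0\le\beta$ in Hypothesis \ref{hyp:nonlinear_dissipativity}. Under this normalization the computation above yields \eqref{eq:energy_estimate} with $K=C_0$, uniformly in $\mu$ and $p$. I would state this normalization explicitly at the start of the proof, since it is the one genuinely non-routine point.
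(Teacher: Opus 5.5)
Your testing step and the lower bound $\frac{4(p-1)}{p}\ge 2$ for the diffusion term are exactly the paper's. Your exact maximization of $g_p$ is sharper than the paper's treatment of the reaction term. The paper absorbs $C_0|u|^{p-2}$ by ``Young's inequality $C_0|u|^{p-2}\le\frac{\beta}{2}|u|^p+K$'' and simply asserts that $K$ is independent of $p$. In fact the best such constant is $\frac{2C_0}{p}\left(\frac{2C_0(p-2)}{\beta p}\right)^{(p-2)/2}$, which grows geometrically in $p$ once $2C_0>\beta$. So you have found the real difficulty, which the paper's proof passes over.

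The gap is in your last step: the normalization $C_0\le\beta$ is not without loss of generality for this statement. Inequality \eqref{eq:energy_estimate} is not scale invariant: its left side is homogeneous of degree $p$ in $u$ and its right side of degree $0$. So the estimate you obtain for $v=u/\lambda$, with constant $C_0/\lambda^2$, becomes after multiplication by $\lambda^p$
\[
\frac{d}{dt}\int_{\Omega_\mu}|u|^p\,dx+2\alpha\int_{\Omega_\mu}\left|\nabla\left(|u|^{p/2}\right)\right|^2dx\le p\,C_0\lambda^{p-2}V_0 .
\]
This is essentially the same $p$-dependent constant your direct computation already produced. The rescaling changes the function the lemma is about; it does not prove the lemma for $u$.

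No argument can close this gap, because the statement is false without an extra condition such as $C_0\le\beta$. Take $n=3$, $\Omega_\mu=\Omega$ a fixed ball (so $V_0=|\Omega|$), $A=I$ and $f(s)=s-s^3/8$. This $f$ satisfies Hypothesis \ref{hyp:nonlinear_growth} with exponent $3<5$. It satisfies Hypothesis \ref{hyp:nonlinear_dissipativity} with $\beta=1$, $C_0=8$, since $8-2s^2+s^4/8=(s^2-8)^2/8\ge 0$. Let $\phi$ solve $\phi'=\phi-\phi^3/8$ with $\phi(0)=2$. The spatially constant solution $u=\phi(t)$ is a bounded complete orbit (heteroclinic from $0$ to $\sqrt{8}$), so it lies in the attractor. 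At $t=0$ the gradient term vanishes and $\frac{d}{dt}\int_\Omega|u|^p\,dx=p\,2^{p-1}|\Omega|$, which forces $K\ge 2^{p-1}$.

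What you have actually proved is the correct replacement of the lemma: \eqref{eq:energy_estimate} holds with $K=C_0$ when $C_0\le\beta$. In general it holds with $K_p=C_0\Lambda^{p-2}$, where $\Lambda=\max\{1,\sqrt{C_0/\beta}\}$. That is all Section \ref{sec:uniform_linfty} needs. The Moser--Alikakos iteration can be run for $v$, as you suggest. Alternatively, run it for $u$ with $U_k$ defined as a maximum with $\Lambda$ instead of $1$, so that $p_{k+1}K_{p_{k+1}}V_0\le p_{k+1}C_0V_0U_k^{p_{k+1}}$ is absorbed as before. You should present your argument as this corrected statement, not as a proof of the lemma as written.
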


\proof
We test the weak formulation of the governing equation with $v = u|u|^{p-2}$. Applying the strict ellipticity assumption, we obtain:
\begin{equation*}
    \int_{\Omega_\mu} A(x)\nabla u \cdot \nabla \left(u|u|^{p-2}\right) \,dx \geq \alpha \frac{4(p-1)}{p^2} \int_{\Omega_\mu} \left| \nabla |u|^{p/2} \right|^2 \,dx.
\end{equation*}
Multiplying the inequality by $p$ and noting that $\frac{4(p-1)}{p} \geq 2$ for all $p \geq 2$, the diffusion contribution is bounded from below by $2\alpha \int_{\Omega_\mu} \left| \nabla |u|^{p/2} \right|^2 \,dx$.

Using the structural dissipation condition $f(s)s \leq -\beta s^2 + C$, the nonlinear term satisfies $f(u)u|u|^{p-2} \leq -\beta |u|^p + C|u|^{p-2}$. We then apply Young's inequality, $C|u|^{p-2} \leq \frac{\beta}{2} |u|^p + K$, to absorb the lower-order term, yielding:
\begin{equation*}
    p \int_{\Omega_\mu} f(u)u|u|^{p-2} \, dx \leq -\frac{p\beta}{2} \int_{\Omega_\mu} |u|^p \, dx + p K V_0.
\end{equation*}
Discarding the strictly negative term $-\frac{p\beta}{2} \int_{\Omega_\mu} |u|^p \,dx$, we deduce the fundamental $L^p$ energy differential inequality \eqref{eq:energy_estimate}.
\eproof

Building upon Lemma \ref{lem:Lp_energy}, we set up the bootstrap iteration. Let $n$ denote the spatial dimension, and define the parabolic gain factor $\chi = 1 + \frac{2}{n}$. We construct the geometric sequence of exponents $p_k = q \chi^k$ for $k \geq 0$. Furthermore, we introduce the sequence of evaluation times $t_k = t_0 + 1 - 2^{-k}$, the corresponding time steps $r_k = t_k - t_{k-1} = 2^{-k}$, and the tracking variable:
\begin{equation}\label{eq:tracking_variable}
    U_k = \max\left\{ 1, \, \sup_{t \geq t_{k-1}} \|u(t)\|_{L^{p_k}(\Omega_\mu)} \right\}.
\end{equation}

\begin{prop}[Moser-Alikakos Recurrence Relation] \label{prop:moser_recurrence}
Under the standing assumptions of this section, let $\{U_k\}_{k=0}^\infty$ be the sequence defined in \eqref{eq:tracking_variable}. There exists a uniform constant $d > 1$, strictly independent of $k$ and $\mu$, such that the sequence satisfies the recursive inequality:
\begin{equation}\label{eq:moser_recurrence}
    U_{k+1} \leq d^{\frac{k}{p_k \chi}} U_k, \quad \forall k \geq 1.
\end{equation}
\end{prop}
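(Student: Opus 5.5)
We apply Lemma \ref{lem:Lp_energy} with exponent $p = p_{k+1} = p_k\chi$ and set $w = |u|^{p_{k+1}/2}$ and $y(t) = \int_{\Omega_\mu} |u(t)|^{p_{k+1}}\,dx = \|w\|_{L^2}^2$. The aim is a closed differential inequality for $y$ on $[t_{k-1},\infty)$ in which the lower-order quantities are controlled by $U_k$, followed by the Uniform Gronwall Lemma \ref{lem:gronwall} on windows of length $r = r_{k+1}=2^{-(k+1)}$. The gradient term in \eqref{eq:energy_estimate} has to dominate $y$ up to terms involving only $\|u\|_{L^{p_k}}$. We get this from a Gagliardo--Nirenberg type interpolation that is uniform in $\mu$:
\[
\|w\|_{L^2}^2 \le \epsilon \|\nabla w\|_{L^2}^2 + C\,\epsilon^{-\theta}\|w\|_{L^{s}}^2, \qquad s = \frac{2p_k}{p_{k+1}} = \frac{2}{\chi}.
\]
Here $\|w\|_{L^s}^{2} = \|u\|_{L^{p_k}}^{p_{k+1}}$ and $\theta = n/2$, which is consistent with the choice $\chi = 1+2/n$. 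The constant $C$ is independent of $\mu$. To see this, apply the inequality to $E_\mu w$ and use the $H^1$ Sobolev inequality on $\mathbb{R}^n$, the uniform extension bound of Theorem \ref{thm:extension} and Lemma \ref{lem:uniform_sobolev}, Hölder interpolation between $L^s$ and $L^{2^*}$, and $|\Omega_\mu|\le V_0$. One point needs care: the extension controls the full $H^1$ norm, so a term $\|w\|_{L^2}$ appears on the right. This forces the parameter choice $\epsilon \sim 1$ after absorption. We will also try to check that, because $s<2$, this zero-order term can be absorbed by Young's inequality.

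With this interpolation, \eqref{eq:energy_estimate} gives
\[
y' + y \le C_1 p_{k+1} V_0 + C_2\, U_k^{p_{k+1}} \qquad\text{on } [t_{k-1},\infty),
\]
where $C_1,C_2$ depend only on $\alpha$, $n$, $K$ and the uniform Sobolev constants. This has the form of Lemma \ref{lem:gronwall} with $g=0$, $a_1=0$ and $a_2 = r(C_1 p_{k+1}V_0 + C_2 U_k^{p_{k+1}})$. For $a_3$ we integrate \eqref{eq:energy_estimate} over $[t,t+r]$ and apply the interpolation again, which gives $\int_t^{t+r} y \le C(y(t) + 1 + p_{k+1} r + U_k^{p_{k+1}})$. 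The remaining task is to bound $y(t)$ for $t \ge t_{k-1}$. We would do this either by integrating the differential inequality from $t_{k-1}$ using the standard ODE comparison for $y'+y\le h$, or by bounding $\int_t^{t+r}y$ directly through the time-integrated gradient bound. In both cases we get $a_3 \lesssim (1 + r\,p_{k+1})\,U_k^{p_{k+1}}$, using $U_k\ge 1$. Lemma \ref{lem:gronwall} then yields, for $t\ge t_{k-1}+r = t_k$,
\[
y(t) \le C\,\bigl(r^{-1} + p_{k+1}\bigr) U_k^{p_{k+1}} \le C' \,2^{k} \chi^{k} U_k^{p_{k+1}} \le d^{k} U_k^{p_{k+1}},
\]
for a suitable $d>1$ independent of $k$ and $\mu$. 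Taking the $p_{k+1}$-th root gives $U_{k+1} \le d^{k/p_{k+1}} U_k = d^{k/(p_k\chi)}U_k$, which is \eqref{eq:moser_recurrence}. For $k\ge1$, any stray constant factor $C'$ is absorbed by enlarging $d$.

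The main obstacle is the uniform interpolation step. We must check that it can be carried out on the rough domains $\Omega_\mu$ with constants depending only on the Jones parameters and $V_0$, and that the exponent of $\epsilon$ and all constants stay independent of $p$, so that only the factors $2^k$ and $\chi^k$ grow with $k$. A second delicate point is bookkeeping the time windows. The tracking variable $U_{k+1}$ takes a supremum over $t\ge t_k$, while the Gronwall output holds for $t \ge t_{k-1}+r_{k+1}$. Since $r_{k+1}<r_k$, this is the right direction, but it has to be verified that every sup used in the argument is taken over a set on which $U_k$ is actually defined.
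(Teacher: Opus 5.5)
There is a genuine gap, and it sits exactly where control must pass from $L^{p_k}$ to $L^{p_{k+1}}$: the bound for $a_3$.

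\textbf{The gap.} Your estimate $\int_t^{t+r} y \le C\bigl(y(t)+1+p_{k+1}r+U_k^{p_{k+1}}\bigr)$ contains $y(t)$ itself. So you need a bound for $y$ on all of $[t_{k-1},\infty)$. That is more than the proposition asserts, since it only controls $y$ for $t\ge t_k$. It is also unavailable at $t=t_{k-1}$, because $y(t_{k-1})=\|u(t_{k-1})\|_{L^{p_{k+1}}}^{p_{k+1}}$ is not controlled by $U_k$.

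Your first option, ODE comparison for $y'+y\le h$ starting at $t_{k-1}$, runs straight into this. In fact no bound independent of $y(t_{k-1})$ can follow from a linear inequality $y'+y\le h$: the function $y(t)=Me^{-(t-t_{k-1})}$ satisfies it for every $M\ge 0$. Once you linearize the interpolation by Young's inequality with fixed $\epsilon$, the smoothing effect of the gradient term is gone.

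Your second option (``the time-integrated gradient bound'') is not spelled out. It also cannot be carried out with your interpolation inequality. That inequality involves $\nabla |u|^{p_{k+1}/2}$, whose time integral is again controlled only through $y(t)$.

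\textbf{The missing idea.} The paper's Steps 1--2 use the dissipation at the lower exponent $p_k$.
\begin{enumerate}
\item For $t\ge t_{k-1}$, integrate Lemma \ref{lem:Lp_energy} with $p=p_k$ over $[t,t+r]$. This gives $\int_t^{t+r}\|\nabla |u|^{p_k/2}\|_{L^2}^2\,d\tau\le C p_k U_k^{p_k}$.
\item Apply the uniform Gagliardo--Nirenberg inequality $\|v\|_{L^{2\chi}}^{2\chi}\le C\bigl(\|\nabla v\|_{L^2}^2\|v\|_{L^2}^{4/n}+\|v\|_{L^2}^{2\chi}\bigr)$ to $v=|u|^{p_k/2}$. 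This yields $\int_t^{t+r}y\,d\tau\le Cp_kU_k^{p_{k+1}}$ for all $t\ge t_{k-1}$, with no reference to $y$.
\item Then Lemma \ref{lem:gronwall} with $g=0$ and $h=p_{k+1}KV_0$ finishes as you intended. Your level-$p_{k+1}$ interpolation is not needed at all.
\end{enumerate}

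\textbf{An alternative repair closer to your setup.} Keep your Nash-type inequality in unlinearized form, $y\le C\|w\|_{L^s}\bigl(\|\nabla w\|_{L^2}+\|w\|_{L^s}\bigr)$. Whenever $y\ge CU_k^{p_{k+1}}$, this gives $y'+c\,U_k^{-p_{k+1}}y^2\le Cp_{k+1}$. This Riccati-type damping yields, for $t\ge t_k$, a bound of order $(2^k+p_{k+1})U_k^{p_{k+1}}$ that is independent of $y(t_{k-1})$.

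\textbf{A minor correction.} The homogeneous interpolation you use is $\|w\|_{L^2}^2\lesssim\|\nabla w\|_{L^2}\|w\|_{L^s}$. So the exponent of $\epsilon$ is $-1$, not $-n/2$. This is harmless because you fix $\epsilon$.
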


\proof
The proof proceeds in three steps. Let $v_k = |u|^{p_k/2}$.

\textbf{Step 1: Space-Time Dissipation Bound.} 
Integrating the energy inequality \eqref{eq:energy_estimate} over the interval $[t_{k-1}, t]$ (where $t_{k-1} < t \leq t_{k-1} + 1$), we bound the space-time gradient norm of $v_k$. Using the definition of $U_k$ in \eqref{eq:tracking_variable}, we obtain:
\begin{equation}\label{eq:step1_bound}
    \int_{t_{k-1}}^{t} \|\nabla v_k(\tau)\|_{L^2}^2 \, d\tau \leq C_1 p_k U_k^{p_k}.
\end{equation}

\textbf{Step 2: Gagliardo-Nirenberg Interpolation.} 
We estimate the time-integral for the subsequent exponent $p_{k+1} = p_k \chi$. The Gagliardo-Nirenberg interpolation inequality holds with a uniform constant $C_{GN} > 0$ across the uniform Jones family of domains. For $\theta = \frac{n}{n+2}$, we have:
\begin{equation*}
    \|v_k\|_{L^{2\chi}} \leq C_{GN} \|\nabla v_k\|_{L^2}^\theta \|v_k\|_{L^2}^{1-\theta} + C_2 \|v_k\|_{L^2}.
\end{equation*}
Raising this inequality to the power $2\chi$, integrating over $[t_{k-1}, t]$, and substituting the bound \eqref{eq:step1_bound} simplifies the algebraic exponents to $p_k \chi = p_{k+1}$, yielding:
\begin{equation}\label{eq:step2_bound}
    \int_{t_{k-1}}^{t} \|u(\tau)\|_{L^{p_{k+1}}}^{p_{k+1}} \, d\tau \leq C_4 p_k U_k^{p_{k+1}}.
\end{equation}

\textbf{Step 3: Application of the Uniform Gronwall Lemma.} 
Let $y_{k+1}(t) = \int_{\Omega_\mu} |u(t)|^{p_{k+1}} dx$. The energy estimate implies $y_{k+1}'(t) \leq p_{k+1} K V_0$. Applying the Uniform Gronwall Lemma (Lemma \ref{lem:gronwall}) over the window $[t-r_k, t]$ for $t \geq t_k$ gives:
\begin{equation*}
    y_{k+1}(t) \leq \frac{1}{r_k} \int_{t-r_k}^t y_{k+1}(\tau) \, d\tau + p_{k+1} K V_0 r_k.
\end{equation*}
Substituting the integral bound \eqref{eq:step2_bound} and recalling that $1/r_k = 2^k$, we deduce:
\begin{equation*}
    y_{k+1}(t) \leq 2^k C_4 p_k U_k^{p_{k+1}} + p_{k+1} K V_0 \leq d^k U_k^{p_{k+1}},
\end{equation*}
where $d > 1$ is a constant appropriately chosen to absorb the lower-order terms, strictly independent of $k$ and $\mu$. Taking the supremum over $t \geq t_k$ and extracting the $p_{k+1}$-th root yields the desired recurrence relation \eqref{eq:moser_recurrence}.
\eproof

We can now state and prove the main result of this section. For completeness, we fully state the hypotheses in the theorem.

\begin{theorem}[Uniform $L^\infty$ Bound]\label{thm:uniform_linfty} 
Suppose that $\{\Omega_{\mu}\}_{\mu \in \Lambda}$ is a uniform Jones family of domains with uniformly bounded volume. Assume that the strict ellipticity condition (Hypothesis \ref{hyp:elliptic_condition}), the growth condition (Hypothesis \ref{hyp:nonlinear_growth}), and the dissipativity condition (Hypothesis \ref{hyp:nonlinear_dissipativity}) hold. Then, there exists a constant $C_\infty > 0$, strictly independent of the parameter $\mu$, such that any global weak solution $u(t)$ originating in the bounded $L^q(\Omega_\mu)$ absorbing ball satisfies:
\begin{equation*}
    \sup_{t \geq t_0 + 1} \|u(t)\|_{L^\infty(\Omega_\mu)} \leq C_\infty.
\end{equation*}
Consequently, the family of global attractors $\mathcal{A}_\mu$ is uniformly bounded in $L^\infty(\Omega_\mu)$ by $C_\infty$ for all $\mu \in \Lambda$.
\end{theorem}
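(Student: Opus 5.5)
The plan is to iterate the recurrence of Proposition \ref{prop:moser_recurrence} and show that the resulting infinite product of constants converges. Chaining \eqref{eq:moser_recurrence} from $k=1$ to $k=m-1$ gives
\begin{equation*}
U_{m} \leq \prod_{k=1}^{m-1} d^{\frac{k}{p_k \chi}}\, U_1 = d^{\sum_{k=1}^{m-1} \frac{k}{q\chi^{k+1}}}\, U_1 .
\end{equation*}
Since $\chi = 1+\frac{2}{n} > 1$, the series $S=\sum_{k\ge 1} k\chi^{-(k+1)}/q$ converges. It is an explicit arithmetico-geometric sum depending only on $n$. Hence $U_m \leq d^{S} U_1$ for every $m$, with a bound independent of $m$ and $\mu$.

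Two further points remain. First, I must bound the starting quantity $U_1$ uniformly in $\mu$. I would do this by running the three steps of Proposition \ref{prop:moser_recurrence} once more with $k=0$. Step 1 then needs $U_0 = \max\{1,\sup_{t\ge t_{-1}}\|u\|_{L^{q}}\}$, which is controlled by \eqref{eq:uniform_Lq}. To keep $t_{-1}\ge t_0$, I would shift the time sequence (start the $t_k$ at $t_0$) or simply absorb an extra unit of time. This gives $U_1 \le d' \max\{1,C_0\}$ with $d'$ independent of $\mu$.

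Second, I must pass to the limit. The times $t_k \nearrow t_0+1$, so $t_{m-1}\le t_0+1$ for all $m$. Therefore for each fixed $t\ge t_0+1$ and each $m$,
\[
\|u(t)\|_{L^{p_m}} \le U_m \le d^{S} d' \max\{1,C_0\} =: C_\infty .
\]
Because $p_m\to\infty$ and $|\Omega_\mu|\le V_0<\infty$, the standard fact $\|u\|_{L^\infty}=\lim_{p\to\infty}\|u\|_{L^p}$ yields $\|u(t)\|_{L^\infty(\Omega_\mu)}\le C_\infty$. This fact holds on finite measure spaces, including the case where the $L^\infty$ norm is a priori infinite: if the $L^p$ norms stay bounded, then $u\in L^\infty$ with the same bound.

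For the statement about the attractors, I would use invariance. Any $u_0\in\mathcal{A}_\mu$ lies on a complete trajectory inside $\mathcal{A}_\mu$. By Theorem \ref{thm:global_attractor} the attractors are uniformly bounded in $H^1$, so they are uniformly bounded in $L^q$ by Lemma \ref{lem:uniform_sobolev}. The trajectory through $u_0$ therefore starts in the absorbing ball at time $-(t_0+1)$, and applying the estimate at time $t_0+1$ along that backward orbit gives $\|u_0\|_{L^\infty}\le C_\infty$.

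The main obstacle is verifying that the constant $d$ in Proposition \ref{prop:moser_recurrence} really has only geometric growth $d^k$ in $k$. Concretely, the factors $2^k$ from $1/r_k$, the polynomial factors $p_k\sim\chi^k$, and the powers of the Gagliardo--Nirenberg constant raised to $2\chi$ must all be absorbable into $d^k$, uniformly in $\mu$ through the Jones extension. A polynomial-in-$\chi^k$ factor is harmless, but it must not appear raised to a power that grows with $p_k$ before the $p_{k+1}$-th root is taken. I would recheck Step 2 to confirm that $U_k\ge 1$ is what permits the lower-order terms to be dominated by $U_k^{p_{k+1}}$. Once that is in place, the summability of $k/\chi^k$ closes the argument.
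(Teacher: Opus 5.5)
Your proposal is correct and follows the paper's route. You chain the recurrence of Proposition \ref{prop:moser_recurrence}, use the convergence of $\sum_k k\chi^{-(k+1)}$ to bound $U_m$ uniformly in $m$ and $\mu$, let $p_m\to\infty$, and transfer the bound to $\mathcal{A}_\mu$ by invariance. Your separate bound on $U_1$, with the time sequence shifted to start at $t_0$, is more careful than the paper: it iterates from $k=0$ although the recurrence is stated only for $k\ge 1$, and $t_{-1}=t_0-1$ precedes the entering time in \eqref{eq:uniform_Lq}.
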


\proof
The result is a direct consequence of the uniform convergence of the recurrence relation \eqref{eq:moser_recurrence} established in Proposition \ref{prop:moser_recurrence}. Iterating the inequality $U_{k+1} \leq d^{\frac{k}{p_0 \chi^{k+1}}} U_k$ from $k=0$ to $m$, we obtain:
\begin{equation*}
    U_{m+1} \leq U_0 \prod_{k=0}^{m} d^{\frac{k}{p_0 \chi^{k+1}}} = U_0 \, d^{\frac{1}{p_0} \sum_{k=0}^{m} \frac{k}{\chi^{k+1}}}.
\end{equation*}
Since $\chi > 1$, the series $\sum_{k=0}^\infty k \chi^{-(k+1)}$ converges to a finite positive number. Passing to the limit as $m \to \infty$, we conclude that the sequence $U_k$ remains uniformly bounded by a constant $C_\infty > 0$. Because $t_k \to t_0 + 1$ and $p_k \to \infty$, standard properties of Lebesgue spaces imply that the $L^\infty(\Omega_\mu)$ norm is bounded by $C_\infty$ for all $t \geq t_0 + 1$.

Finally, since the global attractor $\mathcal{A}_\mu$ is invariant under the flow ($S_\mu(t)\mathcal{A}_\mu = \mathcal{A}_\mu$ for all $t \geq 0$) and is contained within the bounded $L^q(\Omega_\mu)$ regime, it remains permanently trapped within the uniform $L^\infty(\Omega_\mu)$ absorbing set.
\eproof

\section{Upper Semicontinuity of Attractors}
\label{sec:upper_semicontinuity}

In this section, we analyze the robustness of the global attractors as the domain perturbation parameter vanishes, specifically as $\mu \to 0$. Because the perturbed attractors $\mathcal{A}_\mu$ and the unperturbed attractor $\mathcal{A}_0$ reside in distinct phase spaces depending on the domain, we cannot measure their distance using the standard Hausdorff semidistance directly. To formulate a meaningful continuity concept, we must first introduce a framework to compare functions across these varying spaces.

\textbf{Standing Assumptions for Section \ref{sec:upper_semicontinuity}:} Throughout this section, we assume that the family of domains $\{\Omega_\mu\}_{\mu \geq 0}$ is a uniform Jones family with uniformly bounded volume, and that the Lebesgue measure of the symmetric difference vanishes: $|\Omega_\mu \triangle \Omega_0| \to 0$ as $\mu \to 0$. Furthermore, we assume that the strict ellipticity condition (Hypothesis \ref{hyp:elliptic_condition}), the subcritical growth condition (Hypothesis \ref{hyp:nonlinear_growth}), and the dissipativity condition (Hypothesis \ref{hyp:nonlinear_dissipativity}) all hold. Unless otherwise stated, $A_\mu$ denotes the associated uniformly elliptic operators, and $\mathcal{A}_\mu$ denotes the global attractors for the corresponding semigroups $S_\mu(t)$.

\subsection{Connecting Maps and Asymptotic Isometries in $H^1$}

To compare the dynamics across the varying phase spaces $H^1(\Omega_0)$ and $H^1(\Omega_\mu)$, we construct connecting maps between them. We use the uniform Jones property of the domains $\{\Omega_\mu\}_{\mu \geq 0}$, which guarantees the existence of bounded linear extension operators $\mathfrak{E}_0: H^1(\Omega_0) \to H^1(\mathbb{R}^N)$ and $\mathfrak{E}_\mu: H^1(\Omega_\mu) \to H^1(\mathbb{R}^N)$ with operator norms uniformly bounded independently of $\mu$.

We define the connecting maps $E_\mu : H^1(\Omega_0) \to H^1(\Omega_\mu)$ and $F_\mu : H^1(\Omega_\mu) \to H^1(\Omega_0)$ via extension followed by restriction:
\begin{equation}
    E_\mu u = (\mathfrak{E}_0 u)\big|_{\Omega_\mu}, \quad \text{and} \quad F_\mu v = (\mathfrak{E}_\mu v)\big|_{\Omega_0}.
\end{equation}

We now prove that under the volume convergence of the domains, $E_\mu$ and $F_\mu$ act as asymptotic isometries on the global attractors in the $H^1$ topology.

\begin{lemma}[Asymptotic $H^1$ Isometry on Attractors]
Under the standing assumptions of this section, let $\mathcal{A}_0 \subset H^1(\Omega_0)$ and $\mathcal{A}_\mu \subset H^1(\Omega_\mu)$ be the global attractors. Then:
\begin{equation}\label{eq:isometry_H1_E}
    \sup_{u \in \mathcal{A}_0} \left| \|E_\mu u\|_{H^1(\Omega_\mu)}^2 - \|u\|_{H^1(\Omega_0)}^2 \right| \to 0 \quad \text{as } \mu \to 0,
\end{equation}
and similarly for $F_\mu$ on $\mathcal{A}_\mu$, provided the extended union $\bigcup_{\mu>0} \mathfrak{E}_\mu(\mathcal{A}_\mu)$ is relatively compact in $H^1(\mathbb{R}^N)$.
\end{lemma}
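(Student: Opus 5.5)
The idea is to reduce both statements to uniform integrability of a compact family in the ambient space. For $u\in\mathcal{A}_0$ set $w=\mathfrak{E}_0 u\in H^1(\mathbb{R}^N)$. Then $w|_{\Omega_0}=u$, and $E_\mu u=w|_{\Omega_\mu}$. Writing $g_w=|\nabla w|^2+|w|^2\in L^1(\mathbb{R}^N)$, we get
\begin{equation*}
\left|\|E_\mu u\|_{H^1(\Omega_\mu)}^2-\|u\|_{H^1(\Omega_0)}^2\right|
=\left|\int_{\Omega_\mu\setminus\Omega_0}g_w\,dx-\int_{\Omega_0\setminus\Omega_\mu}g_w\,dx\right|
\le\int_{\Omega_\mu\triangle\Omega_0}g_w\,dx .
\end{equation*}
So \eqref{eq:isometry_H1_E} follows once we show
\begin{equation*}
\sup_{u\in\mathcal{A}_0}\int_{\Omega_\mu\triangle\Omega_0}g_{\mathfrak{E}_0u}\,dx\to0 .
\end{equation*}
Since $|\Omega_\mu\triangle\Omega_0|\to0$ by the standing assumptions, it suffices that the family $\{g_w: w\in\mathfrak{E}_0(\mathcal{A}_0)\}$ is uniformly integrable in $L^1(\mathbb{R}^N)$. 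Here uniform integrability means: for every $\eta>0$ there is $\kappa>0$ such that $|S|<\kappa$ implies $\int_S g_w<\eta$ for all $w$ in the family.

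The first step is to establish this uniform integrability. $\mathcal{A}_0$ is compact in $H^1(\Omega_0)$ by Theorem \ref{thm:global_attractor}, and $\mathfrak{E}_0$ is a bounded linear map (Theorem \ref{thm:extension}). Hence $\mathfrak{E}_0(\mathcal{A}_0)$ is compact in $H^1(\mathbb{R}^N)$. The map $w\mapsto(\nabla w,w)$ is continuous into $L^2(\mathbb{R}^N)^{N+1}$, so $\{\nabla w, w\}$ ranges over a compact subset of $L^2$. Now fix $\eta>0$ and take a finite $\sqrt{\eta}$-net $w_1,\dots,w_m$ of $\mathfrak{E}_0(\mathcal{A}_0)$ in $H^1(\mathbb{R}^N)$. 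Each single function $g_{w_i}$ is integrable, so absolute continuity of the integral gives $\kappa>0$ with $\int_S g_{w_i}<\eta$ for all $i$ whenever $|S|<\kappa$. For an arbitrary $w$, choose $w_i$ with $\|w-w_i\|_{H^1}<\sqrt\eta$. The Minkowski inequality on $S$ then gives
\begin{equation*}
\left(\int_S g_w\right)^{1/2}\le \left(\int_S g_{w_i}\right)^{1/2}+\|w-w_i\|_{H^1(\mathbb{R}^N)}<2\sqrt\eta .
\end{equation*}
This is the required uniform integrability. Choosing $\mu$ small enough that $|\Omega_\mu\triangle\Omega_0|<\kappa$ proves \eqref{eq:isometry_H1_E}.

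For $F_\mu$ on $\mathcal{A}_\mu$ the computation is identical. Take $v\in\mathcal{A}_\mu$, set $w=\mathfrak{E}_\mu v$, and note that $w|_{\Omega_\mu}=v$ and $F_\mu v=w|_{\Omega_0}$. The difference of squared norms is again bounded by $\int_{\Omega_\mu\triangle\Omega_0}g_w$. The only change is that the family of extended functions now varies with $\mu$: the estimate must hold uniformly over $w\in\bigcup_{\mu>0}\mathfrak{E}_\mu(\mathcal{A}_\mu)$.

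This is the main obstacle. Uniform $H^1$ bounds alone, from Theorem \ref{thm:global_attractor} together with the uniform norm $K$ of $\mathfrak{E}_\mu$, are not enough: bounded sets in $H^1$ can concentrate gradient energy on sets of small measure. This is exactly where the hypothesis that $\bigcup_{\mu>0}\mathfrak{E}_\mu(\mathcal{A}_\mu)$ is relatively compact in $H^1(\mathbb{R}^N)$ enters. Its closure is compact, so the finite-net argument above applies verbatim and yields uniform integrability of $\{g_w\}$ over the whole union. Letting $|\Omega_\mu\triangle\Omega_0|\to0$ then gives
\begin{equation*}
\sup_{v\in\mathcal{A}_\mu}\left|\|F_\mu v\|_{H^1(\Omega_0)}^2-\|v\|_{H^1(\Omega_\mu)}^2\right|\to0 .
\end{equation*}
No regularity of the boundaries is needed beyond the Jones extension property, since all integrals are taken over sets in $\mathbb{R}^N$.
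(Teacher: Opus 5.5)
Your proof is correct and takes the paper's route. You bound the difference of squared norms by the energy density $|\nabla w|^2+|w|^2$ integrated over $\Omega_\mu \triangle \Omega_0$, then use uniform integrability of these densities over a compact family in $H^1$; for $F_\mu$ this is where the relative compactness hypothesis enters. The only differences are minor: the paper gets uniform integrability from Dunford--Pettis (compact in $L^1$, hence relatively weakly compact), whereas your direct $\sqrt{\eta}$-net and Minkowski argument is more elementary and avoids any tightness caveat for Dunford--Pettis on $\mathbb{R}^N$, and you write out the $F_\mu$ case that the paper only asserts by analogy.
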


\proof
We detail the proof for $E_\mu$; the argument relies fundamentally on the compactness of the unperturbed attractor. For any $u \in \mathcal{A}_0$, we split the $H^1$ norm integrals over the intersection and the disjoint regions:
\begin{align*}
    \left| \|E_\mu u\|_{H^1(\Omega_\mu)}^2 - \|u\|_{H^1(\Omega_0)}^2 \right| &= \left| \int_{\Omega_\mu} (|\nabla \mathfrak{E}_0 u|^2 + |\mathfrak{E}_0 u|^2) \, dx - \int_{\Omega_0} (|\nabla u|^2 + |u|^2) \, dx \right| \\
    &\leq \int_{\Omega_\mu \setminus \Omega_0} (|\nabla \mathfrak{E}_0 u|^2 + |\mathfrak{E}_0 u|^2) \, dx + \int_{\Omega_0 \setminus \Omega_\mu} (|\nabla u|^2 + |u|^2) \, dx.
\end{align*}

Because $\mathcal{A}_0$ is compact in $H^1(\Omega_0)$, the family of functions $\{ |\nabla u|^2 + |u|^2 : u \in \mathcal{A}_0 \}$ is a strongly compact, and therefore relatively weakly compact, subset of $L^1(\Omega_0)$. By the Dunford-Pettis theorem, this family is uniformly integrable. Thus, as the measure $|\Omega_0 \setminus \Omega_\mu| \leq |\Omega_\mu \triangle \Omega_0| \to 0$, we have:
\begin{equation}\label{eq:unif_int_1}
    \sup_{u \in \mathcal{A}_0} \int_{\Omega_0 \setminus \Omega_\mu} (|\nabla u|^2 + |u|^2) \, dx \to 0.
\end{equation}

Furthermore, since $\mathfrak{E}_0: H^1(\Omega_0) \to H^1(\mathbb{R}^N)$ is a bounded linear operator, it maps the compact set $\mathcal{A}_0$ to a compact set $\mathfrak{E}_0(\mathcal{A}_0)$ in $H^1(\mathbb{R}^N)$. Consequently, the extended energy densities $\{ |\nabla \mathfrak{E}_0 u|^2 + |\mathfrak{E}_0 u|^2 : u \in \mathcal{A}_0 \}$ are uniformly integrable in $L^1(\mathbb{R}^N)$. Since $|\Omega_\mu \setminus \Omega_0| \leq |\Omega_\mu \triangle \Omega_0| \to 0$, we similarly obtain:
\begin{equation}\label{eq:unif_int_2}
    \sup_{u \in \mathcal{A}_0} \int_{\Omega_\mu \setminus \Omega_0} (|\nabla \mathfrak{E}_0 u|^2 + |\mathfrak{E}_0 u|^2) \, dx \to 0.
\end{equation}
Combining \eqref{eq:unif_int_1} and \eqref{eq:unif_int_2} yields the desired asymptotic isometry in $H^1$.
\eproof

\subsection{Convergence of Resolvents via Mosco's Framework}
\label{subsec:mosco_resolvents}

The cornerstone of establishing the robust behavior of the nonlinear dynamics is the strong convergence of the linear resolvents associated with the operators $A_\mu$. This strong resolvent convergence can be established by framing the domain perturbations within the context of Mosco convergence of functionals. In our geometric setting, this framework reduces to verifying specific weak and strong convergence properties of the associated bilinear forms, which we establish directly in the following theorem.

For $\mu \geq 0$, let $a_\mu : H^1(\Omega_\mu) \times H^1(\Omega_\mu) \to \mathbb{R}$ be the continuous, symmetric, and coercive bilinear form associated with $A_\mu + I$:
\begin{equation}\label{eq:bilinear_form}
    a_\mu(u, v) = \int_{\Omega_\mu} \left( A(x)\nabla u \cdot \nabla v + u v \right) \, dx.
\end{equation}
By the strict ellipticity condition (Hypothesis \ref{hyp:elliptic_condition}), the diffusion matrix $A(x)$ satisfies $A(x)\xi \cdot \xi \geq \alpha |\xi|^2$, ensuring that the forms $a_\mu$ are uniformly coercive independently of $\mu$.

\begin{definition}[Convergence in Varying Spaces]\label{def:varying_convergence}
Let $u_\mu \in H^1(\Omega_\mu)$ be a sequence of functions and $u_0 \in H^1(\Omega_0)$.
\begin{enumerate}
    \item \textbf{Strong Convergence:} We say that $u_\mu$ converges strongly to $u_0$, denoted as $u_\mu \to u_0$, if the distance between $u_\mu$ and the projection of $u_0$ vanishes in the perturbed space:
    \begin{equation}
        \|u_\mu - E_\mu u_0\|_{H^1(\Omega_\mu)} \longrightarrow 0 \quad \text{as } \mu \to 0,
    \end{equation}
    where $E_\mu u = (\mathfrak{E}_0 u)|_{\Omega_\mu}$ is the connecting map.
    
    \item \textbf{Weak Convergence:} We say that $u_\mu$ converges weakly to $u_0$, denoted as $u_\mu \rightharpoonup u_0$, if there exists a function $U \in H^1(\mathbb{R}^N)$ such that the uniform Jones extensions converge weakly in the common ambient space:
    \begin{equation}
        \mathfrak{E}_\mu u_\mu \rightharpoonup U \quad \text{weakly in } H^1(\mathbb{R}^N) \text{ as } \mu \to 0,
    \end{equation}
    and the restriction of this limit to the unperturbed domain matches $u_0$, that is, $U|_{\Omega_0} = u_0$.
\end{enumerate}
\end{definition}

\begin{rem}\label{rem:equivalent_strong}
We point out that the notion of strong convergence introduced in Definition \ref{def:varying_convergence} is equivalent to an existential property often employed in the abstract theory of metric measure spaces. Specifically, $u_\mu \to u_0$ strongly if and only if there exist extensions $U_\mu \in H^1(\mathbb{R}^N)$ of $u_\mu$ and $U_0 \in H^1(\mathbb{R}^N)$ of $u_0$ such that $U_\mu \to U_0$ strongly in $H^1(\mathbb{R}^N)$ as $\mu \to 0$.

The backward implication is a direct consequence of restricting the global extensions to $\Omega_\mu$, as the Lebesgue measure of the symmetric difference vanishes. Conversely, if $\|u_\mu - E_\mu u_0\|_{H^1(\Omega_\mu)} \to 0$, one can explicitly construct the strongly converging extensions by choosing $U_0 = \mathfrak{E}_0 u_0$ and setting
$$ U_\mu = \mathfrak{E}_\mu(u_\mu - E_\mu u_0) + U_0. $$
Because the family of Jones extension operators $\mathfrak{E}_\mu$ is uniformly bounded, we obtain 
$$ \|U_\mu - U_0\|_{H^1(\mathbb{R}^N)} \le C \|u_\mu - E_\mu u_0\|_{H^1(\Omega_\mu)} \longrightarrow 0. $$
This flexible characterization shows that our topology does not strictly depend on the rigid tails generated by specific extension operators outside the domains.
\end{rem}

With this functional framework in place, we can establish the convergence of the linear resolvents by framing the domain perturbations within the context of Mosco convergence of functionals.

\begin{theorem}[Mosco Convergence of Bilinear Forms]\label{thm:mosco}  
Assume that $\{\Omega_{\mu} \}_{\mu \in \Lambda } $ is a uniform Jones family of domains with uniformly bounded volume, such that $|\Omega_\mu \triangle \Omega_0| \to 0$ as $\mu \to 0$. Assume further that the strict ellipticity condition (Hypothesis \ref{hyp:elliptic_condition}), the subcritical growth condition (Hypothesis \ref{hyp:nonlinear_growth}), and the dissipativity condition (Hypothesis \ref{hyp:nonlinear_dissipativity}) hold. Then, the family of bilinear forms $a_\mu$ converges to $a_0$ in the sense of Mosco, meaning that the following two conditions are satisfied:
\begin{enumerate}
    \item \textbf{Weak Lower Semicontinuity (M1):} If $u_\mu \in H^1(\Omega_\mu)$ converges weakly to $u_0 \in H^1(\Omega_0)$ (in the sense of Definition \ref{def:varying_convergence}) as $\mu \to 0$, then:
    \begin{equation}
        \liminf_{\mu \to 0} a_\mu(u_\mu, u_\mu) \geq a_0(u_0, u_0).
    \end{equation}
    
    \item \textbf{Strong Recovery Sequence (M2):} For every $v_0 \in H^1(\Omega_0)$, there exists a sequence $v_\mu \in H^1(\Omega_\mu)$ such that $v_\mu \to v_0$ strongly as $\mu \to 0$, and:
    \begin{equation}
        \limsup_{\mu \to 0} a_\mu(v_\mu, v_\mu) \leq a_0(v_0, v_0).
    \end{equation}
\end{enumerate}
\end{theorem}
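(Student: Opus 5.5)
For (M2) I would take the canonical recovery sequence $v_\mu = E_\mu v_0 = (\mathfrak{E}_0 v_0)|_{\Omega_\mu}$. By Definition \ref{def:varying_convergence}, $v_\mu \to v_0$ strongly, since $\|v_\mu - E_\mu v_0\|_{H^1(\Omega_\mu)} = 0$. Write $V = \mathfrak{E}_0 v_0 \in H^1(\mathbb{R}^N)$. Then
\begin{equation*}
a_\mu(v_\mu,v_\mu) - a_0(v_0,v_0) = \int_{\Omega_\mu\setminus\Omega_0} \left(A\nabla V\cdot\nabla V + V^2\right)dx - \int_{\Omega_0\setminus\Omega_\mu}\left(A\nabla V\cdot\nabla V + V^2\right)dx .
\end{equation*}
The integrand is a single fixed $L^1(\mathbb{R}^N)$ function, since $|A\nabla V\cdot \nabla V|\le \|A\|_\infty|\nabla V|^2$. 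Absolute continuity of the Lebesgue integral together with $|\Omega_\mu\triangle\Omega_0|\to 0$ therefore gives equality in the limit, which is more than the required limsup inequality. This is the single-function case of the uniform integrability argument used in the Asymptotic $H^1$ Isometry lemma.

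For (M1), let $U_\mu = \mathfrak{E}_\mu u_\mu \rightharpoonup U$ weakly in $H^1(\mathbb{R}^N)$ with $U|_{\Omega_0}=u_0$. I would rewrite the form with indicator functions:
\begin{equation*}
a_\mu(u_\mu,u_\mu)=\int_{\mathbb{R}^N}\chi_{\Omega_\mu}\left(A\nabla U_\mu\cdot\nabla U_\mu + U_\mu^2\right)dx \ \ge\ \int_{\mathbb{R}^N}\chi_{\Omega_\mu\cap\Omega_0}\left(A\nabla U_\mu\cdot\nabla U_\mu + U_\mu^2\right)dx ,
\end{equation*}
where the inequality uses nonnegativity of the integrand (ellipticity). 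The key step is to show that $\chi_{\Omega_\mu\cap\Omega_0}\nabla U_\mu \rightharpoonup \chi_{\Omega_0}\nabla U$ and $\chi_{\Omega_\mu\cap\Omega_0}U_\mu \rightharpoonup \chi_{\Omega_0}U$ weakly in $L^2(\mathbb{R}^N)$. To do this, test against $\phi\in L^2$. The difference $\int (\chi_{\Omega_\mu\cap\Omega_0}-\chi_{\Omega_0})\nabla U_\mu\cdot\phi$ is bounded by $\sup_\mu\|U_\mu\|_{H^1}\,\|\phi\|_{L^2(\Omega_0\setminus\Omega_\mu)}$. The sup is finite because weakly convergent sequences are bounded, and the second factor tends to $0$ by absolute continuity, since $|\Omega_0\setminus\Omega_\mu|\to0$. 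The remaining term $\int\chi_{\Omega_0}\nabla U_\mu\cdot\phi \to \int \chi_{\Omega_0}\nabla U\cdot \phi$ by weak convergence in $H^1(\mathbb{R}^N)$.

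With these weak $L^2$ limits in hand, the map $(w,g)\mapsto\int (A g\cdot g + w^2)$ is a continuous convex quadratic functional on $L^2(\mathbb{R}^N)^{1+N}$, because $A$ is symmetric positive definite and bounded. It is therefore weakly lower semicontinuous. Applying this gives
\begin{equation*}
\liminf_{\mu\to0} a_\mu(u_\mu,u_\mu)\ \ge\ \int_{\Omega_0}\left(A\nabla U\cdot\nabla U+U^2\right)dx = a_0(u_0,u_0),
\end{equation*}
since $U=u_0$ on $\Omega_0$.

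The main obstacle is the mixing of weak convergence with varying domains in (M1): one cannot pass to the limit in $\chi_{\Omega_\mu}|\nabla U_\mu|^2$ directly. The device above handles this by discarding the portion on $\Omega_\mu\setminus\Omega_0$ via positivity, and by controlling the cutoff error on $\Omega_0\setminus\Omega_\mu$ through the uniform $H^1$ bound combined with the vanishing measure of the symmetric difference. The uniform bound on the Jones extension norms is what ensures the sequence $U_\mu$ is bounded in the first place, if weak convergence is obtained by extraction. The nonlinearity hypotheses play no role in this linear statement.
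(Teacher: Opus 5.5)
Your proof is correct. Part (M2) is identical to the paper's: the same recovery sequence $v_\mu=E_\mu v_0$, the same splitting over $\Omega_\mu\setminus\Omega_0$ and $\Omega_0\setminus\Omega_\mu$, and the same absolute-continuity argument.

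In (M1) you handle the moving indicator differently. The paper never truncates. It expands the nonnegative quantity $\int\chi_{\Omega_\mu}\bigl(A\nabla(U_\mu-U_0)\cdot\nabla(U_\mu-U_0)+|U_\mu-U_0|^2\bigr)\,dx$ around the fixed pivot $U_0=\mathfrak{E}_0u_0$. This leaves $\chi_{\Omega_\mu}$ multiplying only the fixed function $U_0$, so $\chi_{\Omega_\mu}A\nabla U_0\to\chi_{\Omega_0}A\nabla U_0$ strongly in $L^2$, and a weak--strong pairing gives the bound $2a_0(u_0,u_0)-a_0(u_0,u_0)$.

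You instead discard $\Omega_\mu\setminus\Omega_0$ by positivity and keep the indicator on the weakly converging sequence. You then show directly that $\chi_{\Omega_\mu\cap\Omega_0}(U_\mu,\nabla U_\mu)\rightharpoonup\chi_{\Omega_0}(U,\nabla U)$ in $L^2$, using eventual boundedness of $U_\mu$ and $\|\phi\|_{L^2(\Omega_0\setminus\Omega_\mu)}\to0$. Finally you quote weak lower semicontinuity of a continuous convex functional.

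The paper's pivot inequality $Q(x)\ge 2B(x,y)-Q(y)$ (with $Q$ the quadratic energy and $B$ its bilinear form) is exactly the subgradient inequality behind that lower semicontinuity, so the two arguments share the same core. The paper's version is self-contained and only ever needs strong convergence of $\chi_{\Omega_\mu}g$ for a fixed $g$. Yours does not need the auxiliary extension $\mathfrak{E}_0u_0$, and it confines all domain dependence to a single weak-convergence lemma. It would also adapt directly to nonquadratic convex energies, where an explicit expansion is less convenient.

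Your observation that the growth and dissipativity hypotheses play no role in this linear statement is also correct.
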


\proof
To prove (M1), let $u_\mu \rightharpoonup u_0$ weakly as described in Definition \ref{def:varying_convergence}. This means that $U_\mu = \mathfrak{E}_\mu u_\mu$ converges weakly to some limit $U \in H^1(\mathbb{R}^N)$, and importantly, $U|_{\Omega_0} = u_0$. 

To bound the perturbed energy, we define the pivot function $U_0 = \mathfrak{E}_0 u_0 \in H^1(\mathbb{R}^N)$. We can write the perturbed form as an integral over $\mathbb{R}^N$ by using characteristic functions:
\begin{equation*}
    a_\mu(u_\mu, u_\mu) = \int_{\mathbb{R}^N} \chi_{\Omega_\mu} \left( A(x)\nabla U_\mu \cdot \nabla U_\mu + |U_\mu|^2 \right) \, dx.
\end{equation*}
Because $A(x)$ is positive definite almost everywhere, the quadratic form evaluated at the difference $(U_\mu - U_0)$ is non-negative:
\begin{equation*}
    \int_{\mathbb{R}^N} \chi_{\Omega_\mu} \left( A(x)\nabla (U_\mu - U_0) \cdot \nabla (U_\mu - U_0) + |U_\mu - U_0|^2 \right) \, dx \geq 0.
\end{equation*}
Expanding this expression and isolating $a_\mu(u_\mu, u_\mu)$, we obtain:
\begin{align}\label{eq:mosco_expansion}
    a_\mu(u_\mu, u_\mu) &\geq 2 \int_{\mathbb{R}^N} \chi_{\Omega_\mu} \left( A(x)\nabla U_\mu \cdot \nabla U_0 + U_\mu U_0 \right) \, dx \nonumber \\
    &\quad - \int_{\mathbb{R}^N} \chi_{\Omega_\mu} \left( A(x)\nabla U_0 \cdot \nabla U_0 + |U_0|^2 \right) \, dx.
\end{align}
We now pass to the limit as $\mu \to 0$. Since the Lebesgue measure of the symmetric difference vanishes ($|\Omega_\mu \triangle \Omega_0| \to 0$), the functions $\chi_{\Omega_\mu} A(x) \nabla U_0$ and $\chi_{\Omega_\mu} U_0$ converge strongly in $L^2(\mathbb{R}^N)$ to $\chi_{\Omega_0} A(x) \nabla U_0$ and $\chi_{\Omega_0} U_0$, respectively, by the Dominated Convergence Theorem. 

Because $U_\mu \rightharpoonup U$ weakly in $H^1(\mathbb{R}^N)$, the first integral on the right-hand side of \eqref{eq:mosco_expansion} is the inner product of a weakly converging sequence and a strongly converging sequence. Its limit is:
\begin{equation*}
    2 \int_{\mathbb{R}^N} \chi_{\Omega_0} \left( A(x)\nabla U \cdot \nabla U_0 + U U_0 \right) \, dx = 2 \int_{\Omega_0} \left( A(x)\nabla u_0 \cdot \nabla u_0 + |u_0|^2 \right) \, dx = 2 a_0(u_0, u_0),
\end{equation*}
where we used the crucial fact that inside $\Omega_0$, both $U$ and $U_0$ exactly coincide with $u_0$. 

The second integral in \eqref{eq:mosco_expansion} converges directly to $a_0(u_0, u_0)$ by the Dominated Convergence Theorem. Therefore, taking the $\liminf$ on both sides yields:
\begin{equation*}
    \liminf_{\mu \to 0} a_\mu(u_\mu, u_\mu) \geq 2 a_0(u_0, u_0) - a_0(u_0, u_0) = a_0(u_0, u_0).
\end{equation*}
This rigorously establishes the weak lower semicontinuity condition (M1).

To prove (M2), we set the recovery sequence exactly as $v_\mu = E_\mu v_0 = (\mathfrak{E}_0 v_0)|_{\Omega_\mu}$. By construction, $\|v_\mu - E_\mu v_0\|_{H^1(\Omega_\mu)} = 0$, satisfying the strong convergence trivially. For the energy bound, we compute:
\begin{align*}
    a_\mu(v_\mu, v_\mu) &= \int_{\Omega_\mu} \left( A(x)\nabla (\mathfrak{E}_0 v_0) \cdot \nabla (\mathfrak{E}_0 v_0) + |\mathfrak{E}_0 v_0|^2 \right) \, dx \\
    &= \int_{\Omega_0} \left( A(x)\nabla v_0 \cdot \nabla v_0 + |v_0|^2 \right) \, dx \\
    &\quad + \int_{\Omega_\mu \setminus \Omega_0} \left( A(x)\nabla (\mathfrak{E}_0 v_0) \cdot \nabla (\mathfrak{E}_0 v_0) + |\mathfrak{E}_0 v_0|^2 \right) \, dx \\
    &\quad - \int_{\Omega_0 \setminus \Omega_\mu} \left( A(x)\nabla (\mathfrak{E}_0 v_0) \cdot \nabla (\mathfrak{E}_0 v_0) + |\mathfrak{E}_0 v_0|^2 \right) \, dx.
\end{align*}
Because $\mathfrak{E}_0 v_0 \in H^1(\mathbb{R}^N)$, the integrands are fixed $L^1(\mathbb{R}^N)$ functions. Since the measure of the symmetric difference $|\Omega_\mu \triangle \Omega_0| \to 0$, the integrals over the error sets $\Omega_\mu \setminus \Omega_0$ and $\Omega_0 \setminus \Omega_\mu$ strictly vanish as $\mu \to 0$ by the absolute continuity of the Lebesgue integral. Hence, $\lim_{\mu \to 0} a_\mu(v_\mu, v_\mu) = a_0(v_0, v_0)$, which perfectly fulfills the strong recovery condition.
\eproof

\begin{rem}[Extension Operators on Dual Spaces]\label{rem:dual_extension}
When analyzing domain perturbations for problems with Neumann or Robin boundary conditions, extending elements from the dual space $X_0(\Omega_\mu) \equiv (H^1(\Omega_\mu))'$ requires care, as this space contains boundary-supported distributions. Rather than using the geometric Jones extension (which applies to the primal space $H^1$), the canonical extension of functionals is achieved via adjunction. 

Let $R_\mu : H^1(\mathbb{R}^N) \to H^1(\Omega_\mu)$ denote the standard restriction operator, $R_\mu \phi = \phi|_{\Omega_\mu}$, which trivially satisfies $\|R_\mu\|_{\mathcal{L}(H^1, H^1)} \leq 1$ irrespective of the domain's geometry. We define the dual extension operator $\widetilde{E}_\mu : (H^1(\Omega_\mu))' \to (H^1(\mathbb{R}^N))'$ as the adjoint $R_\mu^*$. Its action on any $f \in (H^1(\Omega_\mu))'$ against a test function $\phi \in H^1(\mathbb{R}^N)$ is given by:
\begin{equation}
    \langle \widetilde{E}_\mu f, \phi \rangle_{(H^1(\mathbb{R}^N))', H^1(\mathbb{R}^N)} = \langle f, R_\mu \phi \rangle_{(H^1(\Omega_\mu))', H^1(\Omega_\mu)}.
\end{equation}
By duality, the operator norm is bounded exactly by the norm of $R_\mu$:
\begin{equation}
    \|\widetilde{E}_\mu f\|_{(H^1(\mathbb{R}^N))'} = \sup_{\|\phi\|_{H^1} \leq 1} \langle f, R_\mu \phi \rangle \leq \|f\|_{(H^1(\Omega_\mu))'}.
\end{equation}
Consequently, $\widetilde{E}_\mu$ is uniformly bounded by $1$ for all $\mu$. Since this operator also coincides with the standard zero-extension on $L^2(\Omega_\mu)$ (with norm bounded by $1$), standard interpolation guarantees that the extension acts uniformly continuously on the entire fractional scale $X_\beta(\Omega_\mu)$ for $\beta \geq 0$, with embedding constants strictly independent of the boundary roughness parameter $\mu$.
\end{rem}

\begin{cor}[Strong Resolvent Convergence in Fractional Spaces]\label{cor:resolvent_convergence}
Under the standing assumptions of this section, the resolvents converge strongly with respect to the fractional base space topology. That is, for any $f_0 \in X_\beta(\Omega_0)$ and $f_\mu \in X_\beta(\Omega_\mu)$ (with $0 \leq \beta < 1/2$) such that $\|f_\mu - (\widetilde{E}_0 f_0)|_{\Omega_\mu}\|_{X_\beta(\Omega_\mu)} \to 0$, the solutions $u_\mu = (A_\mu + I)^{-1} f_\mu$ satisfy:
\begin{equation}
    \lim_{\mu \to 0} \|u_\mu - E_\mu u_0\|_{H^1(\Omega_\mu)} = 0,
\end{equation}
where $u_0 = (A_0 + I)^{-1} f_0$.
\end{cor}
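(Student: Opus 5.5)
The plan is the standard Mosco argument: uniform a priori bounds, weak compactness in the ambient space $H^1(\mathbb{R}^N)$, identification of the limit through the variational equation, and then an upgrade from weak to strong convergence by convergence of the energies.

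\textbf{Step 1: uniform bound.} The variational formulation reads
\[
a_\mu(u_\mu,\phi)=\langle f_\mu,\phi\rangle \quad \text{for all } \phi\in H^1(\Omega_\mu).
\]
Testing with $\phi=u_\mu$ and using the uniform coercivity $a_\mu(u,u)\geq \min\{\alpha,1\}\|u\|_{H^1(\Omega_\mu)}^2$ gives
\[
\|u_\mu\|_{H^1(\Omega_\mu)}\leq C\|f_\mu\|_{(H^1(\Omega_\mu))'}.
\]
Since $\beta\geq 0$, we have $X_\beta\hookrightarrow X_0=(H^1)'$ with a constant uniform in $\mu$ (Remark \ref{rem:dual_extension}). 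The hypothesis shows $\|f_\mu\|_{X_\beta(\Omega_\mu)}$ is bounded by $\|(\widetilde{E}_0 f_0)|_{\Omega_\mu}\|+o(1)$, which is uniformly bounded. By Theorem \ref{thm:extension}, $U_\mu=\mathfrak{E}_\mu u_\mu$ is then bounded in $H^1(\mathbb{R}^N)$.

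\textbf{Step 2: weak limit and identification.} Extract a subsequence with $U_\mu\rightharpoonup U$ in $H^1(\mathbb{R}^N)$ and set $w=U|_{\Omega_0}$, so $u_\mu\rightharpoonup w$ in the sense of Definition \ref{def:varying_convergence}. To show $w=u_0$, fix $\psi\in H^1(\Omega_0)$ and test the $\mu$-equation with $\phi=E_\mu\psi=(\mathfrak{E}_0\psi)|_{\Omega_\mu}$. Writing
\[
a_\mu(u_\mu,E_\mu\psi)=\int_{\mathbb{R}^N}\chi_{\Omega_\mu}\left(A\nabla U_\mu\cdot\nabla \mathfrak{E}_0\psi+U_\mu\,\mathfrak{E}_0\psi\right)dx,
\]
the factor $\chi_{\Omega_\mu}A\nabla\mathfrak{E}_0\psi$ converges strongly in $L^2$ by dominated convergence, as in the proof of (M1). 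Hence the left side tends to $a_0(w,\psi)$.

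For the right side, split
\[
\langle f_\mu,E_\mu\psi\rangle=\langle f_\mu-(\widetilde{E}_0f_0)|_{\Omega_\mu},E_\mu\psi\rangle+\langle (\widetilde{E}_0f_0)|_{\Omega_\mu},E_\mu\psi\rangle .
\]
The first term vanishes because $\|E_\mu\psi\|_{H^1}$ is uniformly bounded.

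\textbf{Step 3: convergence of the source term (main obstacle).} One must show $\langle (\widetilde{E}_0f_0)|_{\Omega_\mu},E_\mu\psi\rangle\to\langle f_0,\psi\rangle$. The meaning of ``restriction'' of $\widetilde{E}_0f_0$ to $\Omega_\mu$ for a genuine dual element is delicate. I would use $\beta<1/2$ and the scale identification to place $f_0$ in the $X_{1/2-\beta}$-dual scale, and then argue by density:
\begin{enumerate}
\item For $f_0\in L^2(\Omega_0)$, $\widetilde{E}_0f_0$ is the zero extension. The pairing becomes $\int_{\Omega_\mu\cap\Omega_0}f_0\,\mathfrak{E}_0\psi\,dx$, which converges to $\int_{\Omega_0}f_0\psi\,dx$ since $|\Omega_\mu\triangle\Omega_0|\to0$.
\item For general $f_0$, approximate in $X_\beta(\Omega_0)$ by $L^2$ functions, which are dense there. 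The errors are controlled uniformly in $\mu$ by the uniform bounds on $\widetilde{E}_0$ across the scale (Remark \ref{rem:dual_extension}) and on $E_\mu$.
\end{enumerate}
Passing to the limit then gives $a_0(w,\psi)=\langle f_0,\psi\rangle$ for all $\psi$, so $w=u_0$. By uniqueness of the limit, the whole family converges weakly.

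\textbf{Step 4: strong convergence.} Testing with $u_\mu$ itself gives $a_\mu(u_\mu,u_\mu)=\langle f_\mu,u_\mu\rangle$. The same density-splitting as in Step 3, now with a pairing against the weakly convergent $U_\mu$ (strongly convergent in $L^2_{loc}$ by Rellich on a large ball containing all domains), yields $a_\mu(u_\mu,u_\mu)\to\langle f_0,u_0\rangle=a_0(u_0,u_0)$.

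Next expand
\[
a_\mu(u_\mu-E_\mu u_0,\,u_\mu-E_\mu u_0)=a_\mu(u_\mu,u_\mu)-2a_\mu(u_\mu,E_\mu u_0)+a_\mu(E_\mu u_0,E_\mu u_0).
\]
By Step 2 the middle term tends to $-2a_0(u_0,u_0)$, and by (M2) of Theorem \ref{thm:mosco} the last term tends to $a_0(u_0,u_0)$. The whole expression therefore tends to $0$, and uniform coercivity gives $\|u_\mu-E_\mu u_0\|_{H^1(\Omega_\mu)}\to0$.
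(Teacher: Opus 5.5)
\textbf{There is a genuine gap in Step 3(b).} The same unproved estimate is also used silently in Step 1 (``which is uniformly bounded'') and in the density-splitting of Step 4. To pass from $L^2$ data to a general $f_0\in X_\beta(\Omega_0)$ you need
\[
\bigl|\langle (\widetilde{E}_0 h)|_{\Omega_\mu}, v\rangle\bigr| \le C\,\|h\|_{X_\beta(\Omega_0)}\,\|v\|_{H^1(\Omega_\mu)}, \qquad C \text{ independent of } \mu .
\]
The bounds on $\widetilde{E}_0$ and $E_\mu$ do not give this. They control $\widetilde{E}_0 h$ only in $(H^1(\mathbb{R}^N))'$, and the restriction you use in (a), $h\mapsto \chi_{\Omega_\mu\cap\Omega_0}h$, is not continuous in that topology. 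For $h\in L^2(\Omega_0)$ your pairing is $\int_{\Omega_0} h\,\chi_{\Omega_\mu}\mathfrak{E}_0\psi\,dx$. But $\chi_{\Omega_\mu}\mathfrak{E}_0\psi$ jumps across $\partial\Omega_\mu\cap\Omega_0$, so in general it is not in $X_{1-\beta}(\Omega_0)=H^{1-2\beta}(\Omega_0)$ once $1-2\beta\ge 1/2$.

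With this meaning of ``restriction'' the statement is actually false for $0\le\beta<1/4$. Take $\Omega_0$ the unit ball, $\Omega_\mu=(1-\mu)\Omega_0$, $A=I$, and $f_0=\sigma$ the surface measure of $\partial\Omega_0$; by the trace theorem $\sigma\in X_\beta(\Omega_0)$. Then $\widetilde{E}_0 f_0$ is supported on $\partial\Omega_0$, so its restriction to $\Omega_\mu$ in the sense of (a) is $0$ (e.g.\ as the limit of $\chi_{\Omega_\mu}h_k$ for $L^2$ approximants $h_k$ concentrating in thin annuli at $\partial\Omega_0$). Hence $f_\mu=0$ satisfies the hypothesis and $u_\mu=0$, while $\|E_\mu u_0\|_{H^1(\Omega_\mu)}\to\|u_0\|_{H^1(\Omega_0)}\neq 0$, since $\int_{\Omega_0}u_0=|\partial\Omega_0|$. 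So no density argument can close Step 3 as written.

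\textbf{How to repair it.} The statement becomes true if the restriction is read as the adjoint of the Jones extension:
\[
\langle (\widetilde{E}_0 f_0)|_{\Omega_\mu}, v\rangle := \langle \widetilde{E}_0 f_0, \mathfrak{E}_\mu v\rangle = \langle f_0, F_\mu v\rangle, \qquad F_\mu v=(\mathfrak{E}_\mu v)|_{\Omega_0}.
\]
This is bounded by $K\|f_0\|_{X_0(\Omega_0)}$ uniformly in $\mu$, which repairs Step 1. No density is then needed, because both pairings converge directly:
\begin{itemize}
\item[(i)] $\langle f_0,F_\mu u_\mu\rangle=\langle f_0,R_0U_\mu\rangle\to\langle f_0,u_0\rangle$, since $R_0U_\mu\rightharpoonup u_0$ in $H^1(\Omega_0)$.
\item[(ii)] $\langle f_0,F_\mu E_\mu\psi\rangle\to\langle f_0,\psi\rangle$. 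Set $W_\mu:=\mathfrak{E}_\mu E_\mu\psi-\mathfrak{E}_0\psi$. It is bounded in $H^1(\mathbb{R}^N)$ and vanishes on $\Omega_\mu$, so by H\"older and the Sobolev embedding $\|W_\mu\|_{L^2(\Omega_0)}=\|W_\mu\|_{L^2(\Omega_0\setminus\Omega_\mu)}\to 0$. Hence $W_\mu|_{\Omega_0}\rightharpoonup 0$ in $H^1(\Omega_0)$.
\end{itemize}
With this, your Steps 2 and 4 go through verbatim. The Rellich argument on a hold-all ball, which is not among the hypotheses, is unnecessary: a weak--strong pairing suffices.

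\textbf{Comparison with the paper.} The paper identifies the weak limit through the energies, $J_0(u^*)\le\liminf J_\mu(u_\mu)\le\limsup J_\mu(E_\mu u_0)\le J_0(u_0)$, rather than through the Euler--Lagrange equation, and then uses the same three-term expansion. However, it simply asserts that ``$f_\mu\to f_0$ in $X_0$'' lets the pairings pass to the limit. So it relies on exactly the convergence you singled out, without defining the restriction. Once that convergence is secured as above, your equation-based identification is equally valid and does not need (M1).
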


\proof
Let $f_0 \in X_\beta(\Omega_0)$ and $f_\mu \in X_\beta(\Omega_\mu)$. Because the domains satisfy the uniform Jones condition, the continuous embedding $X_\beta(\Omega_\mu) \hookrightarrow X_0(\Omega_\mu) \equiv (H^1(\Omega_\mu))'$ is uniform with respect to $\mu$. Consequently, the assumption $\|f_\mu - (\widetilde{E}_0 f_0)|_{\Omega_\mu}\|_{X_\beta(\Omega_\mu)} \to 0$ implies strong convergence in the dual space: $f_\mu \to f_0$ in $X_0$.

Let $u_\mu = (A_\mu + I)^{-1} f_\mu$ be the unique weak solutions satisfying the variational equality $a_\mu(u_\mu, \phi) = \langle f_\mu, \phi \rangle$ for all $\phi \in H^1(\Omega_\mu)$, and let $u_0 = (A_0 + I)^{-1} f_0$. The proof that $u_\mu \to u_0$ strongly in $H^1(\Omega_\mu)$ follows in four steps.

\textbf{Step 1: Uniform boundedness.} Testing the variational equation with $\phi = u_\mu$ and using the uniform coercivity of the bilinear forms ($a_\mu(u,u) \ge \alpha \|u\|_{H^1}^2$), we obtain:
\begin{equation*}
    \alpha \|u_\mu\|_{H^1(\Omega_\mu)}^2 \leq a_\mu(u_\mu, u_\mu) \leq \langle f_\mu, u_\mu \rangle \leq \|f_\mu\|_{X_0(\Omega_\mu)} \|u_\mu\|_{H^1(\Omega_\mu)}.
\end{equation*}
Since the sequence $f_\mu$ converges strongly, its dual norm is uniformly bounded, which implies $\sup_{\mu > 0} \|u_\mu\|_{H^1(\Omega_\mu)} \leq M$.

\textbf{Step 2: Weak convergence via the ambient space.} By the uniform Jones property, the extensions $U_\mu = \mathfrak{E}_\mu u_\mu$ are uniformly bounded in the fixed global space $H^1(\mathbb{R}^N)$. By the Banach-Alaoglu Theorem, there exists a subsequence (still denoted $U_\mu$) converging weakly to some limit $U \in H^1(\mathbb{R}^N)$. Setting $u^* = U|_{\Omega_0}$, this means $u_\mu \rightharpoonup u^*$ weakly in the sense of Definition \ref{def:varying_convergence}.

\textbf{Step 3: Identifying the limit via Mosco conditions.} The solution $u_\mu$ is the unique minimizer of the energy functional $J_\mu(w) = \frac{1}{2}a_\mu(w,w) - \langle f_\mu, w \rangle$ over $H^1(\Omega_\mu)$. 
By the strong recovery sequence condition (M2), there exists $v_\mu \to u_0$ strongly. Since $u_\mu$ is the minimizer, $J_\mu(u_\mu) \leq J_\mu(v_\mu)$. Taking the limit supremum:
\begin{equation*}
    \limsup_{\mu \to 0} J_\mu(u_\mu) \leq \limsup_{\mu \to 0} J_\mu(v_\mu) \leq J_0(u_0).
\end{equation*}
Conversely, applying the weak lower semicontinuity (M1) and the fact that $f_\mu \to f_0$ strongly (which allows passing to the limit in the dual pairing), we have:
\begin{equation*}
    J_0(u^*) \leq \liminf_{\mu \to 0} J_\mu(u_\mu).
\end{equation*}
Combining these inequalities gives $J_0(u^*) \leq J_0(u_0)$. Since $u_0$ is the unique global minimizer of $J_0$ on $H^1(\Omega_0)$, it follows that $u^* = u_0$. Because the limit is unique, the entire sequence converges weakly: $u_\mu \rightharpoonup u_0$.

\textbf{Step 4: Strong convergence via energy.} The inequalities above also collapse the limit of the energy functionals, yielding $\lim_{\mu \to 0} J_\mu(u_\mu) = J_0(u_0)$. Because the dual pairings converge ($\langle f_\mu, u_\mu \rangle \to \langle f_0, u_0 \rangle$), the purely quadratic energies must also converge:
\begin{equation*}
    \lim_{\mu \to 0} a_\mu(u_\mu, u_\mu) = a_0(u_0, u_0).
\end{equation*}
To establish strong convergence, we measure the distance using the connecting map $E_\mu u_0$. By the coercivity of $a_\mu$, we expand the difference:
\begin{align*}
    \alpha \|u_\mu - E_\mu u_0\|_{H^1(\Omega_\mu)}^2 &\leq a_\mu(u_\mu - E_\mu u_0, u_\mu - E_\mu u_0) \\
    &= a_\mu(u_\mu, u_\mu) - 2 a_\mu(u_\mu, E_\mu u_0) + a_\mu(E_\mu u_0, E_\mu u_0).
\end{align*}
We analyze the three terms on the right-hand side. We established that the first term converges to $a_0(u_0, u_0)$. The third term converges to $a_0(u_0, u_0)$ because $E_\mu u_0$ is precisely the recovery sequence from Theorem \ref{thm:mosco}. For the middle term, because $u_\mu$ is the weak solution, we have $a_\mu(u_\mu, E_\mu u_0) = \langle f_\mu, E_\mu u_0 \rangle$. Since $f_\mu \to f_0$ in $X_0$ and $E_\mu u_0 \to u_0$ in $H^1$, this dual pairing converges to $\langle f_0, u_0 \rangle = a_0(u_0, u_0)$.

Therefore, the right-hand side converges to $a_0(u_0, u_0) - 2a_0(u_0, u_0) + a_0(u_0, u_0) = 0$. This concludes the proof that $\|u_\mu - E_\mu u_0\|_{H^1(\Omega_\mu)} \to 0$.
\eproof

\subsection{Strong Semigroup Convergence for the Nonlinear Term}
\label{subsec:semigroup_convergence}

To apply the variation of constants formula and establish the convergence of the nonlinear flows, we must control the action of the linear semigroup on the nonlinear term $f(u)$. As established in the local existence framework, the subcritical growth condition ensures that the Nemitskii operator maps the phase space $X_1^\mu = H^1(\Omega_\mu)$ continuously into the lower fractional space $X_\delta^\mu$ for a fixed $\delta \in (0, 1/2)$.

Because the fractional spaces are defined via the strictly positive self-adjoint operators $A_\mu$, the associated analytic semigroups $e^{-A_\mu t}$ exert a regularizing effect. Specifically, there exist constants $M, M_0 > 0$, independent of $\mu$, such that for all $t > 0$, the semigroups map $X_\delta^\mu$ into $X_1^\mu$ with the uniform smoothing bounds:
\begin{equation}\label{eq:smoothing_bounds_fractional}
    \|e^{-A_\mu t} \psi\|_{X_1^\mu} \leq M t^{-(1-\delta)} \|\psi\|_{X_\delta^\mu}, \quad \|e^{-A_0 t} \phi\|_{X_1^0} \leq M_0 t^{-(1-\delta)} \|\phi\|_{X_\delta^0}.
\end{equation}
Since $\delta > 0$, the singularity exponent $1-\delta < 1$ is strictly integrable over bounded time intervals. Furthermore, by interpolation, the uniform boundedness of the Jones extension operators guarantees that the connecting maps $E_\mu$ are uniformly bounded from $X_\delta^0$ to $X_\delta^\mu$ with a constant $C_E > 0$.

The strong resolvent convergence obtained in Corollary \ref{cor:resolvent_convergence} provides the fundamental linear convergence required for the semigroups. Because the uniform Jones extension operators $\mathfrak{E}_\mu$ allow us to embed the varying domains into a single fixed functional space over $\mathbb{R}^N$, we bypass the heavy machinery of abstract approximation theorems on varying Banach spaces. 

Specifically, any bounded operator $T_\mu$ on $H^1(\Omega_\mu)$ can be explicitly lifted to a bounded operator $\tilde{T}_\mu$ on the common ambient space $H^1(\mathbb{R}^N)$ via the composition $\tilde{T}_\mu = \mathfrak{E}_\mu T_\mu R_\mu$, where $R_\mu U = U|_{\Omega_\mu}$ is the standard restriction. Because $R_\mu \mathfrak{E}_\mu = I_{H^1(\Omega_\mu)}$, this lifting perfectly preserves operator composition, meaning that $\tilde{S}_\mu(t) = \mathfrak{E}_\mu S_\mu(t) R_\mu$ remains a strongly continuous semigroup on the closed subspace $V_\mu = \text{Im}(\mathfrak{E}_\mu) \subset H^1(\mathbb{R}^N)$. 

By lifting the resolvents to this common space, the abstract Trotter-Kato theorem for fixed spaces (or, equivalently, degenerate semigroups on projections; see, e.g., Pazy \cite{Pazy1983} or Kato \cite{Kato1966}) translates the strong convergence of the resolvents directly into the strong convergence of the associated linear semigroups.
Consequently, for any $t > 0$, the semigroups $S_\mu(t)$ converge strongly to $S_0(t)$ with respect to the fractional base space topology.  Moreover, because our operators are uniformly sectorial and generate analytic semigroups, the abstract approximation framework guarantees that this convergence holds uniformly on compact time intervals in $(0, \infty)$ (see, for instance, the abstract formulations in \cite{Yagi2010}).

Specifically, for any strongly regular data $\psi \in X_1^0$ and any fixed $t > 0$:
\begin{equation}\label{eq:trotter_kato}
    \lim_{\mu \to 0} \|e^{-A_\mu t} E_\mu \psi - E_\mu e^{-A_0 t} \psi\|_{X_1^\mu} = 0.
\end{equation}

Leveraging this base Trotter-Kato convergence and the fractional smoothing estimates, we can now establish the uniform convergence of the semigroups when acting on the lower-regularity target space of the nonlinearity.

 \begin{lemma}[Semigroup Convergence in $X_\delta$]\label{lem:semigroup_xdelta}
Under the standing assumptions of this section, for any $\phi \in X_\delta^0$ and any fixed time $t > 0$, we have:
\begin{equation}
    \lim_{\mu \to 0} \|e^{-A_\mu t} E_\mu \phi - E_\mu e^{-A_0 t} \phi\|_{X_1^\mu} = 0.
\end{equation}
\end{lemma}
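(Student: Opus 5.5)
The plan is a density argument that combines the base Trotter--Kato convergence \eqref{eq:trotter_kato} on $X_1^0$ with the uniform smoothing bounds \eqref{eq:smoothing_bounds_fractional}. Fix $t>0$ and $\phi\in X_\delta^0$. Since $X_1^0=D(A_0^{1-\delta})$ (in the $X_\delta^0$-based scale) is dense in $X_\delta^0$, choose for a given $\varepsilon>0$ some $\psi\in X_1^0$ with $\|\phi-\psi\|_{X_\delta^0}\le\varepsilon$. Then split
\begin{align*}
\|e^{-A_\mu t}E_\mu\phi-E_\mu e^{-A_0t}\phi\|_{X_1^\mu}
&\le \|e^{-A_\mu t}E_\mu(\phi-\psi)\|_{X_1^\mu}
+\|e^{-A_\mu t}E_\mu\psi-E_\mu e^{-A_0t}\psi\|_{X_1^\mu}\\
&\quad+\|E_\mu e^{-A_0t}(\psi-\phi)\|_{X_1^\mu}.
\end{align*}

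For the first term we use the uniform smoothing estimate together with the uniform bound $C_E$ for $E_\mu:X_\delta^0\to X_\delta^\mu$. This gives at most $M t^{-(1-\delta)}C_E\,\varepsilon$, uniformly in $\mu$. The middle term tends to $0$ as $\mu\to0$ by \eqref{eq:trotter_kato}, since $\psi\in X_1^0$.

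For the third term we use that $E_\mu$ is uniformly bounded $X_1^0=H^1(\Omega_0)\to H^1(\Omega_\mu)=X_1^\mu$. Its norm is at most $K$, by the uniform Jones bound and $\|R_\mu\|\le1$. Combined with the smoothing of $e^{-A_0t}$, this bounds the term by $K M_0 t^{-(1-\delta)}\varepsilon$.

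Taking $\limsup_{\mu\to0}$ yields $\limsup\le (MC_E+KM_0)t^{-(1-\delta)}\varepsilon$. Since $\varepsilon$ is arbitrary, the limit is $0$. This is the standard $\varepsilon/3$ argument, and it works because every constant is independent of $\mu$.

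The main obstacle is justifying the uniformity inputs rather than the splitting. First, the constant $M$ in \eqref{eq:smoothing_bounds_fractional} must really be independent of $\mu$. I would argue this from the spectral calculus: $A_\mu$ is self-adjoint and $\ge\min(\alpha,\beta/2)$ uniformly, so $\|A_\mu^{1-\delta}e^{-A_\mu t}\|\le\sup_{\lambda>0}\lambda^{1-\delta}e^{-\lambda t}=c\,t^{-(1-\delta)}$ in the $X_\delta^\mu$ norm, with no domain dependence. Second, the bound $C_E$ for $E_\mu$ on $X_\delta$ must be uniform. I would obtain it by interpolating between $X_{1/2}=L^2$ and $X_1=H^1$ (with $E_\mu$ uniformly bounded on both, by the Jones extension and restriction) and $X_0$ (via duality with Remark~\ref{rem:dual_extension}), noting that the interpolation constants for complex interpolation of self-adjoint scales are exactly $1$. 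If that interpolation step proves delicate at $\delta<1/2$, the fallback is to bound the first term through $L^{2n/(n+2-4\delta)}$-type representatives of $\phi$, using the uniform Sobolev constants of Lemma~\ref{lem:uniform_sobolev}.
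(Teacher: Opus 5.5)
Your proposal is correct and is essentially the paper's own proof. Both use density of $X_1^0$ in $X_\delta^0$ and the same three-term split, with the uniform smoothing bound and the $X_\delta$-bound $C_E$ controlling the outer terms and the base convergence \eqref{eq:trotter_kato} handling the middle one; your separate constant $K$ for $E_\mu: X_1^0\to X_1^\mu$ is actually cleaner than the paper's reuse of $C_E$. The one caveat concerns your interpolation sketch for $C_E$, which the paper simply asserts before the lemma: interpolation needs a single operator bounded from $X_0^0$ to $X_0^\mu$ that coincides with $R_\mu\mathfrak{E}_0$ on $H^1(\Omega_0)$, and the adjoint-type extension of Remark~\ref{rem:dual_extension}, which is zero extension on $L^2$, is not such an operator, so like the paper you are really using the uniform $X_\delta$-bound as a premise.
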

 
\proof
Let $t > 0$ be fixed and choose an arbitrary $\phi \in X_\delta^0$. Since the phase space $X_1^0$ is densely embedded in $X_\delta^0$, for any $\epsilon > 0$ there exists a smooth approximation $\phi_k \in X_1^0$ such that:
\begin{equation*}
    \|\phi - \phi_k\|_{X_\delta^0} < \epsilon.
\end{equation*}

We decompose the difference of the semigroups acting on $\phi$ by adding and subtracting their action on the regular approximation $\phi_k$ inside the phase space norm:
\begin{align*}
    \| e^{-A_\mu t} E_\mu \phi - E_\mu e^{-A_0 t} \phi \|_{X_1^\mu} &\leq \| e^{-A_\mu t} E_\mu (\phi - \phi_k) \|_{X_1^\mu} \\
    &\quad + \| e^{-A_\mu t} E_\mu \phi_k - E_\mu e^{-A_0 t} \phi_k \|_{X_1^\mu} \\
    &\quad + \| E_\mu e^{-A_0 t} (\phi_k - \phi) \|_{X_1^\mu}.
\end{align*}

We estimate the three resulting terms as follows:

\vspace{0.5em}
\noindent\textbf{Step 1: Perturbed High-Frequency Error.} By the uniform fractional smoothing bound \eqref{eq:smoothing_bounds_fractional} and the uniform $X_\delta$-boundedness of $E_\mu$, we obtain:
\begin{equation*}
    \| e^{-A_\mu t} E_\mu (\phi - \phi_k) \|_{X_1^\mu} \leq M t^{-(1-\delta)} \|E_\mu (\phi - \phi_k)\|_{X_\delta^\mu} \leq M C_E t^{-(1-\delta)} \epsilon.
\end{equation*}

\vspace{0.5em}
\noindent\textbf{Step 2: Unperturbed High-Frequency Error.} Similarly, utilizing the unperturbed smoothing bound and the boundedness of $E_\mu$ mapping into $X_1^\mu$:
\begin{equation*}
    \| E_\mu e^{-A_0 t} (\phi_k - \phi) \|_{X_1^\mu} \leq C_E \|e^{-A_0 t} (\phi_k - \phi)\|_{X_1^0} \leq C_E M_0 t^{-(1-\delta)} \epsilon.
\end{equation*}

\vspace{0.5em}
\noindent\textbf{Step 3: Smooth Convergence.} Since $\phi_k \in X_1^0$, the generalized Trotter-Kato convergence \eqref{eq:trotter_kato} directly applies. For the fixed time $t$ and fixed approximation $\phi_k$, we have:
\begin{equation*}
    \lim_{\mu \to 0} \| e^{-A_\mu t} E_\mu \phi_k - E_\mu e^{-A_0 t} \phi_k \|_{X_1^\mu} = 0.
\end{equation*}

Combining these estimates and taking the limit supremum as $\mu \to 0$ yields:
\begin{equation*}
    \limsup_{\mu \to 0} \| e^{-A_\mu t} E_\mu \phi - E_\mu e^{-A_0 t} \phi \|_{X_1^\mu} \leq (M + M_0) C_E t^{-(1-\delta)} \epsilon.
\end{equation*}
Because $\epsilon > 0$ was chosen arbitrarily and $t^{-(1-\delta)}$ is fixed and finite, the limit supremum is identically zero, completing the proof.
\eproof

\subsection{Upper Semicontinuity of Attractors}

In this section, we establish the upper semicontinuity of the family of global attractors $\{\mathcal{A}_\mu\}_{\mu > 0}$ as the domain perturbation parameter $\mu \to 0$. We compare the perturbed attractors with the unperturbed one in the local $H^1(\Omega_\mu)$ norm by means of the connecting maps $E_\mu: H^1(\Omega_0) \to H^1(\Omega_\mu)$.

\begin{definition}[Upper Semicontinuity] \label{def:uppersemicontinuity}
The family of attractors $\{\mathcal{A}_\mu\}$ is said to be upper semicontinuous at $\mu = 0$ if 
\begin{equation}
    \lim_{\mu \to 0} \operatorname{dist}_{H^1(\Omega_\mu)} (\mathcal{A}_\mu, E_\mu \mathcal{A}_0) = 0,
\end{equation}
where the asymmetric Hausdorff semidistance is given by
$$\operatorname{dist}_{H^1(\Omega_\mu)} (X, Y) = \sup_{u \in X} \inf_{v \in Y} \|u - v\|_{H^1(\Omega_\mu)}.$$
\end{definition}

\begin{lemma}[Collective Precompactness of Attractors]\label{lem:collective_compactness}
Under the standing assumptions of this section, the family of extended attractors $\bigcup_{\mu \in (0, \mu_0]} \mathfrak{E}_\mu \mathcal{A}_\mu$ is relatively compact in $H^1(\mathbb{R}^N)$.
\end{lemma}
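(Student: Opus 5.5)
Given a sequence $w_k = \mathfrak{E}_{\mu_k} u_k$ with $u_k \in \mathcal{A}_{\mu_k}$, I will extract a subsequence that converges strongly in $H^1(\mathbb{R}^N)$. If $\mu_k$ stays away from $0$, I reduce to a fixed $\mu$ and use the compactness of $\mathcal{A}_\mu$ together with the continuity of $\mathfrak{E}_\mu$. (This step tacitly needs finitely many or continuously varying domains; otherwise I would handle it with the same argument as below, with $\Omega_{\bar\mu}$ in place of $\Omega_0$.) So the essential case is $\mu_k \to 0$.

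\textbf{Step 1: Uniform bounds and weak limit.} By Theorem \ref{thm:global_attractor} and Theorem \ref{thm:uniform_linfty}, $\|u_k\|_{H^1(\Omega_{\mu_k})}$ and $\|u_k\|_{L^\infty(\Omega_{\mu_k})}$ are bounded independently of $k$. Theorem \ref{thm:extension} then makes $w_k$ bounded in $H^1(\mathbb{R}^N)$, so a subsequence converges weakly to some $W$. By Rellich on a large ball (the domains have uniformly bounded volume; I would add the mild assumption that they lie in a common bounded set, or localize with a cutoff), it also converges strongly in $L^2_{loc}$. Set $u^* = W|_{\Omega_0}$, so that $u_k \rightharpoonup u^*$ in the sense of Definition \ref{def:varying_convergence}.

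\textbf{Step 2: Upgrade to strong convergence inside the domains via invariance.} By invariance, $u_k = S_{\mu_k}(1) v_k$ with $v_k \in \mathcal{A}_{\mu_k}$. The variation of constants formula gives
\[
u_k = e^{-A_{\mu_k}} v_k + \int_0^1 e^{-A_{\mu_k}(1-s)} F(S_{\mu_k}(s)v_k)\,ds .
\]
The forcing terms are uniformly bounded in $L^\infty$, hence in $L^2$, so by interpolation they lie in $X^{\mu_k}_{1/2+\eta}$ up to the shift. With the uniform smoothing \eqref{eq:smoothing_bounds_fractional} and the Mosco and resolvent convergence of Corollary \ref{cor:resolvent_convergence}, I aim to show that $u_k$ is uniformly bounded in $X^{\mu_k}_{1+\epsilon}$ with constants independent of $k$. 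The key fact to exploit is that the $A_\mu$-energy then converges. Concretely, writing $u_k = (A_{\mu_k}+I)^{-1} g_k$ with $g_k = (A_{\mu_k}+I)u_k$ bounded in $X^{\mu_k}_{\epsilon}$, which embeds compactly into the dual uniformly via $\widetilde{E}_\mu$ from Remark \ref{rem:dual_extension}, I pass to a subsequence with $\widetilde{E}_{\mu_k} g_k \to g$ strongly in $(H^1(\mathbb{R}^N))'$. Applying the argument of Corollary \ref{cor:resolvent_convergence} (Steps 3–4) then gives $a_{\mu_k}(u_k,u_k) \to a_0(u^*,u^*)$ and $\|u_k - E_{\mu_k}u^*\|_{H^1(\Omega_{\mu_k})} \to 0$.

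\textbf{Step 3: Strong convergence of the extensions.} Write $w_k - \mathfrak{E}_0 u^* = \mathfrak{E}_{\mu_k}(u_k - E_{\mu_k}u^*) + (\mathfrak{E}_{\mu_k}E_{\mu_k} - I)\mathfrak{E}_0 u^*$. The first term tends to zero by the uniform bound $K$. The second term is the main obstacle: the Jones extension of the restriction of a fixed function $\Phi$ need not converge to $\Phi$, since the tails outside $\Omega_{\mu_k}$ are determined by $\mathfrak{E}_{\mu_k}$ and do not depend continuously on $\mu$. My first attempt is the estimate $\|\mathfrak{E}_\mu(\Phi|_{\Omega_\mu}) - \Phi\|$ through the local reflection structure of Jones' construction (Whitney cubes near $\partial\Omega_\mu$ of size comparable to their distance). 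If that fails, I would settle for compactness of the set $\{\mathfrak{E}_{\mu_k}E_{\mu_k}\mathfrak{E}_0 u^*\}$ itself. That set is bounded in $H^1$ and its restrictions converge strongly, and I would try to use the explicit construction to get equi-continuity of translations in $H^1$ (Kolmogorov–Riesz), or else reformulate precompactness in the equivalent sense of Remark \ref{rem:equivalent_strong}, which only requires some strongly converging extensions. I expect this extension-tail issue, not the dynamics, to be the genuine difficulty.
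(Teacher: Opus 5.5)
\textbf{The gap is Step 3, which is left open.} Granting your Steps 1--2, so that $\|u_k-E_{\mu_k}u^*\|_{H^1(\Omega_{\mu_k})}\to 0$, you have $w_k=\mathfrak{E}_{\mu_k}(u_k-E_{\mu_k}u^*)+\mathfrak{E}_{\mu_k}R_{\mu_k}\Phi$ with $\Phi=\mathfrak{E}_0u^*$ fixed and the first term tending to zero. Extracting a convergent subsequence of $w_k$ therefore reduces exactly to doing so for $\{\mathfrak{E}_{\mu_k}R_{\mu_k}\Phi\}_k$. That is a property of the extension operators alone, and you give no argument for it; the Kolmogorov--Riesz idea is only a plan.

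Your first attempt aims at the wrong limit. A strong limit of $w_k$ must be the weak limit $W$, whose values off $\Omega_0$ are dictated by the tails of $\mathfrak{E}_{\mu_k}$. So $\mathfrak{E}_\mu R_\mu\Phi\to\Phi$ already fails for $\Omega_\mu\equiv\Omega_0$ with a fixed extension operator different from $\mathfrak{E}_0$. The fallback via Remark~\ref{rem:equivalent_strong} proves a different statement: it gives \emph{some} converging extensions, not precompactness of the given $\mathfrak{E}_\mu\mathcal{A}_\mu$.

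More importantly, no argument using only the uniform bound $K$ of Theorem~\ref{thm:extension} can succeed. Set $\mathfrak{E}'_\mu u=\mathfrak{E}_\mu u+\bigl(\int_{\Omega_\mu}u\bigr)\psi_\mu$, where $\psi_\mu=\mu\sin(x_1/\mu)\chi$ and $\chi\in C_c^\infty$ is supported away from all $\overline{\Omega_\mu}$. This is again a uniformly bounded family of extension operators. Since $\psi_\mu\rightharpoonup 0$ while $\|\nabla\psi_\mu\|_{L^2}\not\to 0$, if $\bigcup_\mu\mathfrak{E}_\mu\mathcal{A}_\mu$ is precompact then $\bigcup_\mu\mathfrak{E}'_\mu\mathcal{A}_\mu$ is not, as soon as the attractors contain, say, a nonzero constant equilibrium. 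So your diagnosis is right, but what is missing is an extra structural property of $\mathfrak{E}_\mu$, not a finer analysis of the dynamics. Separately, the case $\mu_k\to\bar\mu\neq 0$ is not covered: the standing assumptions give convergence of the domains only at $\mu=0$, so you cannot replace $\Omega_0$ by $\Omega_{\bar\mu}$.

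\textbf{How the paper proceeds.} It never passes to limits inside the domains. Invariance, the uniform $H^1$ and $L^\infty$ bounds, and the smoothing of $e^{-A_\mu t}$ give a uniform bound for $\mathcal{A}_\mu$ in $X_{1+\epsilon}(\Omega_\mu)$, which the paper identifies with a bound in $H^{1+\delta}(\Omega_\mu)$. It then asserts that the Jones operators are uniformly bounded on $H^{1+\delta}$, so $\bigcup_\mu\mathfrak{E}_\mu\mathcal{A}_\mu$ is bounded in $H^{1+\delta}(\mathbb{R}^N)$. Rellich on a hold-all region then gives compactness for all $\mu$ at once, tails included, with no convergence of domains needed. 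Be aware that this route relies on the uniform embedding $X_{1+\epsilon}(\Omega_\mu)\hookrightarrow H^{1+\delta}(\Omega_\mu)$. That is an elliptic-regularity statement for the Neumann problem which is not automatic on Jones domains with $L^\infty$ coefficients, and the paper does not prove it.

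\textbf{A lighter completion of your route.} Suppose the $\mathfrak{E}_\mu$ are in addition uniformly bounded from $H^2(\Omega_\mu)$ to $H^2(\mathbb{R}^N)$ and supported in a fixed compact set. This holds for degree-independent versions of Jones' construction given a hold-all region, but it is not part of Theorem~\ref{thm:extension}. Take $\Phi_j\in C_c^\infty$ with $\|\Phi-\Phi_j\|_{H^1}<\eta$. Then $\{\mathfrak{E}_\mu R_\mu\Phi_j\}_\mu$ is bounded in $H^2$, hence precompact in $H^1$, and each of its elements lies within $K\eta$ of the corresponding $\mathfrak{E}_\mu R_\mu\Phi$. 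So $\{\mathfrak{E}_\mu R_\mu\Phi\}_\mu$ is totally bounded. Together with your Step 2, this settles the case $\mu_k\to 0$ without any $H^{1+\delta}$ regularity inside $\Omega_\mu$.
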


\proof
To establish the relative compactness of the union of the extended attractors, we rely on their invariance property and the uniform smoothing action of the parabolic dynamics. By the invariance of the global attractors, we have $\mathcal{A}_\mu = S_\mu(t)\mathcal{A}_\mu$ for any $t > 0$ and all $\mu \in (0, \mu_0]$. 

From our previous estimates, we know that the attractors $\mathcal{A}_\mu$ are uniformly bounded in both $H^1(\Omega_\mu)$ and $L^\infty(\Omega_\mu)$, with bounds strictly independent of $\mu$. Because the nonlinear terms are well-behaved on uniformly bounded $L^\infty$ sets, the variation of constants formula and the regularizing properties of the analytic semigroups $e^{-A_\mu t}$ imply that $S_\mu(t)$ maps these bounded sets into a higher regularity space. Specifically, there exists a uniform constant $C > 0$ such that the attractors are uniformly bounded in a tighter Sobolev space, say $H^{1+\delta}(\Omega_\mu)$ (or the fractional domain space $X_{1+\epsilon}(\Omega_\mu)$).

Using the uniform Jones extension operators $\mathfrak{E}_\mu$, which preserve the Sobolev regularity with constants independent of the domain's geometry, the extended union $\bigcup_{\mu \in (0, \mu_0]} \mathfrak{E}_\mu \mathcal{A}_\mu$ remains uniformly bounded in $H^{1+\delta}(\mathbb{R}^N)$. Since the domains $\Omega_\mu$ are contained within a bounded hold-all region (or assuming suitable tail estimates for the extensions outside a large ball), the Rellich-Kondrachov theorem provides a compact embedding $H^{1+\delta} \hookrightarrow H^1$. Consequently, the uniform bounds in higher regularity guarantee that the union of the extended attractors is precompact in the base topology.
\eproof

\begin{lemma}[Uniform Convergence of the Flows]\label{lem:unif_conv}
Under the standing assumptions of this section, let $\mathbb{K} \subset H^1(\mathbb{R}^N)$ be the compact closure of the union of the extended attractors, defined as $\mathbb{K} = \overline{\bigcup_{\mu \in (0, \mu_0]} \mathfrak{E}_\mu \mathcal{A}_\mu}$. For any fixed $T > 0$, the nonlinear flows converge uniformly on $\mathbb{K}$ in the sense of varying spaces, namely:
\begin{equation}
    \lim_{\mu \to 0} \sup_{u \in \mathbb{K}} \|S_\mu(T) R_\mu u - E_\mu S_0(T) R_0 u\|_{H^1(\Omega_\mu)} = 0,
\end{equation}
where $R_\mu : H^1(\mathbb{R}^N) \to H^1(\Omega_\mu)$ and $R_0 : H^1(\mathbb{R}^N) \to H^1(\Omega_0)$ denote the standard restriction operators.
\end{lemma}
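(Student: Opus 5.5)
The plan is a Duhamel/Gronwall argument in the varying-space framework, combining the linear convergence of Lemma \ref{lem:semigroup_xdelta} and \eqref{eq:trotter_kato} with the compactness of $\mathbb{K}$ from Lemma \ref{lem:collective_compactness}. For $u \in \mathbb{K}$ write $u_\mu(t) = S_\mu(t)R_\mu u$ and $u_0(t) = S_0(t)R_0 u$. Both trajectories start in a fixed bounded subset of $H^1$, since $\mathbb{K}$ is bounded and $\|R_\mu\| \le 1$. Proposition \ref{prop:apriori_bounds} and the estimate \eqref{eq:X1_uniform_bound} then bound both trajectories in $X_1$ on $[0,T]$, uniformly in $\mu$ and $u$. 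From the variation of constants formula we obtain
\begin{align*}
u_\mu(t) - E_\mu u_0(t) &= \big(e^{-A_\mu t}R_\mu u - E_\mu e^{-A_0 t}R_0 u\big) \\
&\quad + \int_0^t e^{-A_\mu(t-s)}\big(F(u_\mu(s)) - F(E_\mu u_0(s))\big)\,ds \\
&\quad + \int_0^t \big(e^{-A_\mu(t-s)}F(E_\mu u_0(s)) - E_\mu e^{-A_0(t-s)}F(u_0(s))\big)\,ds,
\end{align*}
after adding and subtracting $F(E_\mu u_0(s))$. The last integrand also needs the commutation defect $F(E_\mu u_0) - E_\mu F(u_0)$. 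This defect is supported in $\Omega_\mu\setminus\Omega_0$ and can be handled in $X_\delta^\mu$ through the embedding $L^{2n/(n+2-4\delta)} \hookrightarrow X_\delta$. Its norm vanishes uniformly as $|\Omega_\mu\triangle\Omega_0|\to 0$ because $\{|F(\mathfrak{E}_0 v)|^{r}\}$ is uniformly integrable over the compact set of extended orbits.

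The first step is to estimate the linear term. We set $R_\mu u = E_\mu R_0 u + (R_\mu u - E_\mu R_0 u)$. The remainder is the restriction to $\Omega_\mu$ of $u - \mathfrak{E}_0 R_0 u$, which vanishes on $\Omega_0$ and is therefore supported in $\Omega_\mu\setminus\Omega_0$. Its $H^1(\Omega_\mu)$ norm tends to zero uniformly on the compact set $\mathbb{K}$, by the uniform integrability argument already used in the asymptotic isometry lemma; the uniform bound on $e^{-A_\mu t}$ in $X_1^\mu$ then makes this contribution small. For $E_\mu R_0 u$ we use \eqref{eq:trotter_kato}, which gives convergence for each fixed $u$. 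To make it uniform over $\mathbb{K}$ we cover $\mathbb{K}$ by a finite $\epsilon$-net and use the equicontinuity coming from the uniform bounds on $e^{-A_\mu t}$ and on $E_\mu$. The second term is the Gronwall term. By the local Lipschitz bound in the proof of Theorem \ref{thm:local_existence} on the uniform $X_1$ ball, together with the smoothing estimate \eqref{eq:smoothing_bounds_fractional}, it is bounded by $ML\int_0^t (t-s)^{-(1-\delta)}\|u_\mu(s)-E_\mu u_0(s)\|_{X_1^\mu}\,ds$.

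The third term is the main obstacle. Lemma \ref{lem:semigroup_xdelta} gives convergence only pointwise in $t$ and for a fixed $\phi$, whereas here $\phi = F(u_0(s))$ runs over a family indexed by $s\in[0,T]$ and $u\in\mathbb{K}$. My first attempt is to show that $\{F(u_0(s)) : s\in[0,T],\, u\in\mathbb{K}\}$ is relatively compact in $X_\delta^0$. This would follow from the continuity of $(s,u)\mapsto u_0(s)$ into $X_1^0$ (Theorem \ref{thm:local_existence}), the compactness of $[0,T]\times R_0\mathbb{K}$, and the continuity of $F:X_1\to X_\delta$. On a compact set of data, the equicontinuity argument of Lemma \ref{lem:semigroup_xdelta} then upgrades pointwise convergence to uniform convergence. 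The singular part near $s=t$ is controlled by splitting off $\int_{t-\eta}^t$, which is $O(\eta^\delta)$ uniformly by \eqref{eq:smoothing_bounds_fractional}. On $[0,t-\eta]$ we use dominated convergence in $s$, with $(t-s)$ bounded below.

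Finally, write $\omega_\mu(t)$ for the supremum over $u\in\mathbb{K}$ of the error, and $\varepsilon_\mu$ for the supremum over $t\in[0,T]$ of the sum of the first and third terms, where the linear term is handled after the initial-layer cutoff described below. This gives $\omega_\mu(t) \le \varepsilon_\mu + C\int_0^t (t-s)^{-(1-\delta)}\omega_\mu(s)\,ds$ with $\varepsilon_\mu\to 0$. The singular Gronwall inequality (Henry, Lemma 7.1.1) then yields $\omega_\mu(T)\le C_T\varepsilon_\mu\to 0$. One point needs care. The linear term converges uniformly only for $t$ bounded away from $0$, since strong continuity at $t=0$ on $X_1$ is not uniform across $\mu$. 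I would handle this by running the Gronwall argument on $[\tau,T]$ with data $u_\mu(\tau)$ and $u_0(\tau)$, using a short-time continuity estimate. Alternatively, one can exploit that $\mathbb{K}$ lies in a uniformly bounded set of $H^{1+\delta}$ to obtain an initial layer of size $O(\tau^{\delta/2})$.
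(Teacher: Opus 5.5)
Your proposal is correct, but it gets uniformity over $\mathbb{K}$ by a different mechanism than the paper.

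\textbf{The paper's route.} The paper argues by contradiction. From $\mu_k\to 0$ and $u_k\in\mathbb{K}$ violating the estimate, it extracts $u_k\to u_0$ in $H^1(\mathbb{R}^N)$. It then compares $S_{\mu_k}(T)R_{\mu_k}u_k$ with $E_{\mu_k}S_0(T)R_0u_0$, using Trotter--Kato for the linear part and a Gronwall argument for the Duhamel integral. Finally it swaps $u_0$ for $u_k$ using the continuity of $S_0(T)$. What this buys is economy. The limiting source $s\mapsto F(S_0(s)R_0u_0)$ is a single curve in $X_\delta^0$, so the pointwise Lemma~\ref{lem:semigroup_xdelta} plus dominated convergence suffices. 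No $\epsilon$-nets and no compactness of a family of sources are needed.

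\textbf{Your route.} You work directly. You show that $\{F(S_0(s)R_0u): s\in[0,T],\ u\in\mathbb{K}\}$ is relatively compact in $X_\delta^0$, and you upgrade pointwise to uniform convergence by equicontinuity. For uniformity in $\sigma=t-s\in[\eta,T]$, you should add the $\mu$-uniform estimate $\|(e^{-A_\mu\sigma}-e^{-A_\mu\sigma'})\psi\|_{X_1^\mu}\le C\eta^{\delta-2}|\sigma-\sigma'|\,\|\psi\|_{X_\delta^\mu}$. Dominated convergence alone only gives convergence at each fixed $t$.

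In exchange for this extra work, you make explicit several points that the paper folds into its appeal to a generalized Gronwall inequality:
\begin{itemize}
\item the commutation defect $F(E_\mu w)-E_\mu F(w)$, which is supported in $\Omega_\mu\setminus\Omega_0$;
\item the uniform smallness of $\|R_\mu u-E_\mu R_0u\|_{H^1(\Omega_\mu)}$ on $\mathbb{K}$;
\item the integrable singularity at $s=t$.
\end{itemize}
You also need only the uniform $X_1$ a priori bound and the local Lipschitz property of $F:X_1\to X_\delta$. The paper instead invokes the $L^\infty$ bound on the attractors. That bound does not directly apply to trajectories issuing from $R_\mu u$, since these need not lie on $\mathcal{A}_\mu$.

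\textbf{The initial layer.} This is easier than your two remedies suggest. A restart at $\tau$ would require a $\mu$-uniform short-time estimate. The $H^{1+\delta}$ argument would really need $R_\mu\mathbb{K}$ bounded in $X^\mu_{1+\epsilon}$ uniformly, which the paper does not supply on Jones domains. Instead, keep the Duhamel formula from time $0$, with error $e(s)=\|S_\mu(s)R_\mu u-E_\mu S_0(s)R_0u\|_{X_1^\mu}$. For $t\in[\tau,T]$, bound the portion $\int_0^\tau (t-s)^{\delta-1}e(s)\,ds$ of the Gronwall integral using the uniform $X_1$ bound; it is at most $C\tau^\delta/\delta$. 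The singular Gronwall lemma on $[\tau,T]$ then gives $\limsup_{\mu\to 0}\sup_{u\in\mathbb{K}} e(T)\le C_T\tau^\delta$, and you let $\tau\to 0$.
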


\proof
Suppose, for the sake of contradiction, that the uniform convergence on the compact set $\mathbb{K}$ does not hold. Then, the limit of the suprema is strictly positive, meaning there exist a constant $\epsilon_0 > 0$, a subsequence of parameters $\mu_k \to 0$ as $k \to \infty$, and a sequence of global elements $u_k \in \mathbb{K}$ such that:
\begin{equation}\label{eq:contradiction_assumption}
    \|S_{\mu_k}(T) R_{\mu_k} u_k - E_{\mu_k} S_0(T) R_0 u_k\|_{H^1(\Omega_{\mu_k})} \geq \epsilon_0, \quad \forall k \in \mathbb{N}.
\end{equation}

Since $\mathbb{K}$ is a compact subset of $H^1(\mathbb{R}^N)$, we can extract a further subsequence (still indexed by $k$ for simplicity) such that $u_k \to u_0$ strongly in $H^1(\mathbb{R}^N)$ for some limit element $u_0 \in \mathbb{K}$. By the continuity properties of the restriction operators across varying domains, this strong global convergence implies that the localized initial data satisfy $R_{\mu_k} u_k \to R_0 u_0$ in the sense of the varying spaces, and $R_0 u_k \to R_0 u_0$ strongly in $H^1(\Omega_0)$.

Next, we represent the nonlinear trajectories using the variation of constants formula (Duhamel's principle) for the initial data $R_{\mu_k} u_k$ and $R_0 u_0$:
\begin{align*}
    S_{\mu_k}(t) R_{\mu_k} u_k &= e^{-A_{\mu_k} t} R_{\mu_k} u_k + \int_0^t e^{-A_{\mu_k}(t-s)} f(S_{\mu_k}(s) R_{\mu_k} u_k) \, ds, \\
    E_{\mu_k} S_0(t) R_0 u_0 &= E_{\mu_k} e^{-A_0 t} R_0 u_0 + \int_0^t E_{\mu_k} e^{-A_0(t-s)} f(S_0(s) R_0 u_0) \, ds.
\end{align*}

Subtracting these two expressions and evaluating the $H^1(\Omega_{\mu_k})$ norm, we first analyze the linear part. By lifting the linear semigroups to the common ambient space $H^1(\mathbb{R}^N)$ via the extension operators $\mathfrak{E}_{\mu_k}$, the standard Trotter-Kato theorem guarantees the strong convergence of the lifted flows. Restricting this global convergence back to the varying domains, and using the convergence of the initial data $R_{\mu_k} u_k \to R_0 u_0$, we conclude that the linear difference vanishes: 
\begin{equation*}
    \lim_{k \to \infty} \|e^{-A_{\mu_k} t} R_{\mu_k} u_k - E_{\mu_k} e^{-A_0 t} R_0 u_0\|_{H^1(\Omega_{\mu_k})} = 0.
\end{equation*}

For the integral remainder, since $\mathbb{K}$ is compact and the attractors are uniformly bounded in $L^\infty$, the nonlinearity $f$ is locally Lipschitz continuous along these bounded trajectories. Applying this uniform Lipschitz condition and the uniform stability of the analytic semigroups, the generalized Gronwall inequality yields:
\begin{equation*}
    \lim_{k \to \infty} \|S_{\mu_k}(T) R_{\mu_k} u_k - E_{\mu_k} S_0(T) R_0 u_0\|_{H^1(\Omega_{\mu_k})} = 0.
\end{equation*}
Furthermore, by the continuous dependence of the limit flow $S_0(T)$ with respect to its initial data in $H^1(\Omega_0)$ and the strong convergence $R_0 u_k \to R_0 u_0$, we have:
\begin{equation*}
    \|E_{\mu_k} S_0(T) R_0 u_0 - E_{\mu_k} S_0(T) R_0 u_k\|_{H^1(\Omega_{\mu_k})} \leq \|E_{\mu_k}\|_{\mathcal{L}} \|S_0(T) R_0 u_0 - S_0(T) R_0 u_k\|_{H^1(\Omega_0)} \to 0.
\end{equation*}
Combining these limits via the triangle inequality, we deduce that:
\begin{equation*}
    \lim_{k \to \infty} \|S_{\mu_k}(T) R_{\mu_k} u_k - E_{\mu_k} S_0(T) R_0 u_k\|_{H^1(\Omega_{\mu_k})} = 0,
\end{equation*}
which directly contradicts our assumption \eqref{eq:contradiction_assumption}. This contradiction completes the proof, validating the uniform convergence over the fixed compact set $\mathbb{K}$.
\eproof

\begin{theorem}[Upper Semicontinuity of Attractors]\label{thm:upper_semicontinuity}
Assume that $\{\Omega_{\mu} \}_{\mu \geq 0 }$ is a uniform Jones family of domains with uniformly bounded volume, such that $|\Omega_\mu \triangle \Omega_0| \to 0$ as $\mu \to 0$. Assume further that the strict ellipticity condition (Hypothesis \ref{hyp:elliptic_condition}), the subcritical growth condition (Hypothesis \ref{hyp:nonlinear_growth}), and the dissipativity condition (Hypothesis \ref{hyp:nonlinear_dissipativity}) hold. Let $\mathcal{A}_\mu$ denote the global attractors associated with the perturbed operators $A_\mu$.

Then, the family of attractors is upper semicontinuous at $\mu=0$ in the sense of Definition \ref{def:uppersemicontinuity}.
\end{theorem}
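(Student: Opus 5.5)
We argue by contradiction, combining the collective precompactness of Lemma \ref{lem:collective_compactness} with the uniform flow convergence of Lemma \ref{lem:unif_conv}. Suppose the conclusion fails. Then there exist $\epsilon_0>0$, a sequence $\mu_k\to 0$ and points $w_k\in\mathcal{A}_{\mu_k}$ such that
\begin{equation*}
\inf_{v\in\mathcal{A}_0}\|w_k-E_{\mu_k}v\|_{H^1(\Omega_{\mu_k})}\ge \epsilon_0 \quad \text{for all } k.
\end{equation*}
Fix a time $T>0$, to be chosen later.

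\textbf{Step 1: pull back along complete trajectories.} By invariance, each $w_k$ lies on a complete trajectory in $\mathcal{A}_{\mu_k}$. Hence there is $z_k\in\mathcal{A}_{\mu_k}$ with $w_k=S_{\mu_k}(T)z_k$. Set $U_k=\mathfrak{E}_{\mu_k}z_k\in\mathbb{K}$. By Lemma \ref{lem:collective_compactness}, a subsequence gives $U_k\to U$ in $H^1(\mathbb{R}^N)$, with $U\in\mathbb{K}$.

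\textbf{Step 2: identify the limit of $w_k$.} Since $R_{\mu_k}U_k=z_k$, Lemma \ref{lem:unif_conv} gives
\begin{equation*}
\|w_k-E_{\mu_k}S_0(T)R_0U_k\|_{H^1(\Omega_{\mu_k})}\to 0 .
\end{equation*}
We have $R_0U_k\to R_0U$ in $H^1(\Omega_0)$. Continuity of $S_0(T)$ and the uniform bound on $E_{\mu_k}$ then give
\begin{equation*}
\|w_k-E_{\mu_k}S_0(T)R_0U\|_{H^1(\Omega_{\mu_k})}\to 0 .
\end{equation*}

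\textbf{Step 3: show the limit point is attracted by $\mathcal{A}_0$.} We need $S_0(T)R_0U$ to be close to $\mathcal{A}_0$, and $R_0U$ need not lie in $\mathcal{A}_0$. However, $R_0U$ lies in a fixed bounded set $B_0\subset H^1(\Omega_0)$, because $\mathbb{K}$ is bounded and $\|R_0\|\le 1$. Since $\mathcal{A}_0$ attracts $B_0$, we choose $T$ (independently of $k$) so large that
\begin{equation*}
\operatorname{dist}_{H^1(\Omega_0)}(S_0(T)B_0,\mathcal{A}_0)<\frac{\epsilon_0}{2C_E},
\end{equation*}
where $C_E$ bounds $\|E_\mu\|_{\mathcal{L}(H^1(\Omega_0),H^1(\Omega_\mu))}$ uniformly, as guaranteed by Theorem \ref{thm:extension}. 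Pick $v\in\mathcal{A}_0$ with $\|S_0(T)R_0U-v\|<\epsilon_0/(2C_E)$. Then
\begin{equation*}
\|w_k-E_{\mu_k}v\|_{H^1(\Omega_{\mu_k})}\le o(1)+C_E\,\|S_0(T)R_0U-v\|_{H^1(\Omega_0)}<\epsilon_0
\end{equation*}
for large $k$. This contradicts the choice of $w_k$.

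\textbf{Main obstacle: the order of choices.} $T$ must be fixed before invoking Lemma \ref{lem:unif_conv}, since that lemma is stated for each fixed $T$. The argument is consistent because $B_0$ depends only on $\mathbb{K}$, not on $k$. So the order is: fix $\epsilon_0$, then $B_0$, then $T$, and only then choose the sequence. This requires the contradiction sequence to be extracted for that fixed $T$. That is legitimate: the failure of upper semicontinuity yields the points $w_k$ independently of $T$, and $T$ is used only to pull them back to $z_k$. The only technical point left to check is that $\mathbb{K}$ is bounded in $H^1(\mathbb{R}^N)$, which follows from its compactness.
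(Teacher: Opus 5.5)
Your proof is correct and follows essentially the paper's argument: fix $T$ so that $\mathcal{A}_0$ attracts a bounded set within $\epsilon_0/(2C_E)$ (the paper uses $B=\bigcup_\mu F_\mu\mathcal{A}_\mu\subset R_0\mathbb{K}$, you use $R_0\mathbb{K}$), write $w=S_\mu(T)v$ with $v\in\mathcal{A}_\mu$ by invariance, and combine Lemma \ref{lem:unif_conv} with the uniform bound $C_E$ through the triangle inequality. The paper argues directly rather than by contradiction, and your extra extraction $U_k\to U$ together with continuity of $S_0(T)$ is unnecessary, because $S_0(T)R_0U_k$ already lies in $S_0(T)B_0$, which is within $\epsilon_0/(2C_E)$ of $\mathcal{A}_0$.
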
 

\proof
Let $\epsilon > 0$. We seek to show that $\operatorname{dist}_{H^1(\Omega_\mu)} (\mathcal{A}_\mu, E_\mu \mathcal{A}_0) < \epsilon$ for all sufficiently small $\mu$.

\vspace{0.5em}
\noindent \textbf{Step 1: Choice of uniform time $T$.} 
We define the restriction map to the limit domain as $F_\mu := R_0 \mathfrak{E}_\mu$. Since $\mathcal{A}_0$ is the global attractor for $S_0(t)$, it attracts the uniformly bounded set $B = \bigcup_{\mu>0} F_\mu \mathcal{A}_\mu$ in $H^1(\Omega_0)$. Thus, there exists a time $T = T(\epsilon) > 0$ such that:
\begin{equation}\label{eq:step1}
    \operatorname{dist}_{H^1(\Omega_0)} (S_0(T) B, \mathcal{A}_0) < \frac{\epsilon}{2 C_E},
\end{equation}
where $C_E \geq 1$ is the uniform operator norm of the extension maps $E_\mu$. By the boundedness of $E_\mu$, this strictly implies $\operatorname{dist}_{H^1(\Omega_\mu)} (E_\mu S_0(T) B, E_\mu \mathcal{A}_0) < \epsilon/2$.

\vspace{0.5em}
\noindent \textbf{Step 2: Smallness of the parameter $\mu$.} 
For any $v \in \mathcal{A}_\mu$, its global extension $u = \mathfrak{E}_\mu v$ belongs to the compact set $\mathbb{K}$. By construction, $R_\mu u = v$ and $R_0 u = F_\mu v$. Thus, by Lemma \ref{lem:unif_conv}, the nonlinear flow converges uniformly on the attractors. For the fixed time $T$ chosen in Step 1, there exists $\mu_0 > 0$ such that for all $\mu < \mu_0$:
\begin{equation}\label{eq:step2}
    \sup_{v \in \mathcal{A}_\mu} \|S_\mu(T) v - E_\mu S_0(T) F_\mu v\|_{H^1(\Omega_\mu)} \leq \sup_{u \in \mathbb{K}} \|S_\mu(T) R_\mu u - E_\mu S_0(T) R_0 u\|_{H^1(\Omega_\mu)} < \frac{\epsilon}{2}.
\end{equation}

\vspace{0.5em}
\noindent \textbf{Step 3: Conclusion.} 
Let $w \in \mathcal{A}_\mu$ be arbitrary. By the strict invariance of the perturbed attractor, there exists $v \in \mathcal{A}_\mu$ such that $w = S_\mu(T) v$. We observe that the projected state $v_0 := F_\mu v$ belongs to $B$. Applying the triangle inequality, we deduce:
\begin{align*}
    \operatorname{dist}_{H^1(\Omega_\mu)} (w, E_\mu \mathcal{A}_0) &\leq \|S_\mu(T) v - E_\mu S_0(T) v_0\|_{H^1(\Omega_\mu)} \\
    &\quad + \operatorname{dist}_{H^1(\Omega_\mu)} (E_\mu S_0(T) v_0, E_\mu \mathcal{A}_0).
\end{align*}
Using estimate \eqref{eq:step2} for the internal flow deviation and \eqref{eq:step1} for the attraction of the base state, we obtain:
\begin{equation*}
    \operatorname{dist}_{H^1(\Omega_\mu)} (w, E_\mu \mathcal{A}_0) < \frac{\epsilon}{2} + \frac{\epsilon}{2} = \epsilon.
\end{equation*}
Since $w \in \mathcal{A}_\mu$ was chosen arbitrarily, taking the supremum over all such $w$ yields
$$\operatorname{dist}_{H^1(\Omega_\mu)} (\mathcal{A}_\mu, E_\mu \mathcal{A}_0) \leq \epsilon,$$ 
concluding the proof.
\eproof

\section{Concluding Remarks and Future Perspectives}
\label{sec:concluding_remarks}

In this paper, we have established the robust asymptotic behavior of a class of semilinear parabolic equations on a family of varying non-smooth domains. By leveraging the uniform Jones extension property and assuming only the volume convergence of the domains, $|\Omega_\mu \triangle \Omega_0| \to 0$, we constructed an appropriate framework of connecting maps to prove the upper semicontinuity of the global attractors in the strong $H^1$ topology. A key asset of this approach is that the resulting embedding and regularizing constants remain completely independent of the boundary roughness parameter $\mu$, allowing us to pass to the limit safely.

The results and methods developed in this work pave the way for several challenging investigations, which we plan to address in forthcoming projects. In particular, we aim to extend this framework along two primary directions: intrinsic topological convergence and boundary-driven dynamics.

\subsection*{Intrinsic Convergence and Gromov-Hausdorff Distance}

In this work, the proximity of the perturbed domains $\Omega_\mu$ to the reference domain $\Omega_0$ is measured extrinsically via the Lebesgue measure of their symmetric difference. While this condition naturally interfaces with the extension-restriction framework and standard upper semicontinuity theorems, it is strictly tied to the ambient spatial coordinates. Consequently, it fails to recognize the dynamical equivalence of domains that are identical up to a rigid transformation (e.g., spatial translations), where the symmetric difference may be large despite the dynamics being isometrically identical.

A natural and powerful extension is to measure the continuity of the attractors intrinsically. As detailed in recent literature concerning the Gromov-Hausdorff distance of global attractors under domain perturbations (see, e.g., \cite{lee2020gromov}), one can compare $\mathcal{A}_\mu$ and $\mathcal{A}_0$ fundamentally as abstract metric spaces. The uniform $L^\infty$ and $H^1$ compactness bounds derived in our analysis, leveraging the uniform Jones extension operators, provide exactly the asymptotic $\epsilon$-isometry properties required for the Gromov-Hausdorff framework. By precomposing the connecting maps $E_\mu$ with optimal rigid isometries, the continuity results established herein can be immediately generalized to accommodate sequences of domains that converge purely in intrinsic geometry.

\subsection*{Boundary Perturbations: From Robin to Nonlinear Dissipation}

Another physically relevant extension is to consider the evolution problem under boundary interactions, starting with homogeneous Robin boundary conditions:
\begin{equation}\label{eq:robin_future}
    \frac{\partial u}{\partial \nu_A} + \beta(x) u = 0 \quad \text{on } \partial\Omega_\mu,
\end{equation}
where $\beta \in L^\infty(\partial\Omega_\mu)$ is a positive weight. Analytically, transitioning from Neumann to Robin conditions introduces the explicit appearance of boundary integrals $\int_{\partial\Omega_\mu} \beta u v \, d\sigma_\mu$ in the weak formulation. Crucially, the volume convergence of the domains does not generally imply the convergence of the surface measures $\sigma_\mu$ when the boundary exhibits high oscillations or fractal-like roughness. Investigating how to redefine Mosco convergence for boundary-weighted forms under the uniform Jones property is a critical next step.

Beyond linear conditions, we intend to study problems featuring localized nonlinearities on the boundary:
\begin{equation}\label{eq:nonlinear_boundary}
    \frac{\partial u}{\partial \nu_A} + u = g(u) \quad \text{on } \partial\Omega_\mu,
\end{equation}
where $g$ satisfies a subcritical growth condition. In this scenario, the dissipative mechanism itself is concentrated on a highly irregular geometric interface. Adapting the bootstrap iterations presented in this paper to boundary integrals will require stricter geometric assumptions on $\partial \Omega$---such as uniform Lipschitz or Ahlfors regularity of the boundary measures---to ensure the uniform validity of fractional trace embeddings. This interplay between fractional power spaces and varying boundary trace maps will require an intricate synthesis of geometric measure theory and infinite-dimensional dynamical systems.

 \subsection*{ Declaration of Generative AI and AI-assisted technologies in the writing process }

    During the preparation of this work, the author used Google's Gemini AI to assist with English language refinement, LaTeX formatting, and structural organization of the manuscript. After using this tool, the author reviewed and edited the content as needed and takes full responsibility for the content of the publication.

\bibliographystyle{plain} 
\bibliography{referencias}

@article{Jones1981,
  title={Quasiconformal mappings and extendability of functions in Sobolev spaces},
  author={Jones, Peter W},
  journal={Acta Mathematica},
  volume={147},
  pages={71--88},
  year={1981},
  publisher={Springer}
}

@article{foias_prodi_1967,
  title={Sur le comportement global des solutions non-stationnaires des {\'e}quations de Navier-Stokes en dimension 2},
  author={Foias, Ciprian and Prodi, Giovanni},
  journal={Rendiconti del Seminario Matematico della Universit{\`a} di Padova},
  volume={39},
  pages={1--34},
  year={1967},
  publisher={Seminario Matematico of the University of Padua}
}

@book{temam1988,
  title={Infinite-Dimensional Dynamical Systems in Mechanics and Physics},
  author={Temam, Roger},
  volume={68},
  series={Applied Mathematical Sciences},
  year={1988},
  publisher={Springer-Verlag},
  address={New York}
}

@article{Tavares2026,
  title={Attractor Continuity for Semilinear Parabolic Equations on Thin Domains with Degenerating Outward Peaks},
  author={Tavares-Lima, Elaine A. and Lorenzi, Bianca and Pereira, Marcone C.},
  journal={arXiv preprint arXiv:2602.21358},
  year={2026}
}

@book{Pazy1983,
  title={Semigroups of Linear Operators and Applications to Partial Differential Equations},
  author={Pazy, Amnon},
  volume={44},
  series={Applied Mathematical Sciences},
  year={1983},
  publisher={Springer},
  address={New York}
}

@book{Yagi2010,
  title={Abstract Parabolic Evolution Equations and their Applications},
  author={Kato, Tosio},
  year={2010},
  publisher={Springer},
  address={Berlin, Heidelberg}
}

@book{kato1966,
  title={Perturbation Theory for Linear Operators},
  author={Yagi, Atsushi},
  year={1966},
  publisher={Springer},
  address={Berlin, Heidelberg}
}

@book{Henry1981,
  title={Geometric Theory of Semilinear Parabolic Equations},
  author={Henry, Daniel},
  volume={840},
  series={Lecture Notes in Mathematics},
  year={1981},
  publisher={Springer},
  address={Berlin, New York}
}

@article{ArrietaCarvalho2004,
  title={Spectral convergence and nonlinear dynamics of reaction-diffusion equations under perturbations of the domain},
  author={Arrieta, Jos{\'e} M and Carvalho, Alexandre N},
  journal={Journal of Differential Equations},
  volume={199},
  number={1},
  pages={143--178},
  year={2004},
  publisher={Elsevier}
}

@article{Miklavicic1985,
  title={Stability for semilinear equations with noninvertible linear op-erator},
  author={Miklav\v{c}i\v{c}},
  journal={Pacific J. Math.},
  volume={118},
  number={ },
  pages={199--214},
  year={1985}
}

@article{AragaoArrieta2025,
  title={Continuity of attractors of parabolic equations with nonlinear boundary conditions and rapidly varying boundaries. The Lipschitz case.},
  author={   Arag\~ao Gleiciane S., Arrieta, Jos{\'e} M and Bruschi, Simone S.},
  journal={Journal of Differential Equations},
  volume={429},
  number={},
  pages={460--502},
  year={2025},
  publisher={Elsevier}
}

@article{HaleRaugel1992,
  author  = {Hale, Jack K. and Raugel, Genevi{\`e}ve},
  title   = {Reaction-diffusion equation on thin domains},
  journal = {Journal de Math{\'e}matiques Pures et Appliqu{\'e}es},
  volume  = {71},
  number  = {1},
  pages   = {33--95},
  year    = {1992},
  publisher = {Elsevier}
}

@book{MazyaPoborchi,
  author    = {Maz'ya, Vladimir G. and Poborchi, Sergey V.},
  title     = {Differentiable Functions on Bad Domains},
  year      = {1997},
  publisher = {World Scientific},
  address   = {Singapore}
}

@article{Daners2008,
  author  = {Daners, Daniel},
  title   = {Domain perturbation for linear and semi-linear boundary value problems},
  journal = {Handbook of Differential Equations: Stationary Partial Differential Equations},
  volume  = {6},
  pages   = {1--81},
  year    = {2008},
  publisher = {Elsevier}
}

@article{lee2020gromov,
  title={Gromov-Hausdorff stability of global attractors of reaction diffusion equations under perturbations of the domain},
  author={Lee, Jihoon and Nguyen, Nguyen and Toi, Vu Manh},
  journal={Journal of Differential Equations},
  volume={269},
  number={1},
  pages={125--147},
  year={2020},
  publisher={Elsevier}
}

@article{BarbosaPereiraPereira2016,
  author  = {Barbosa, Pricila S. and Pereira, Ant{\^o}nio L. and Pereira, Marcone C.},
  title   = {Continuity of attractors for a family of $\mathcal{C}^1$ perturbations of the square},
  journal = {Annali di Matematica Pura ed Applicata},
  volume  = {196},
  pages   = {1--34},
  year    = {2016}
}

@article{AragaoPereiraPereira2014,
  author  = {Arag{\~a}o, Gleiciane S. and Pereira, Ant{\^o}nio L. and Pereira, Marcone C.},
  title   = {Attractors for a Nonlinear Parabolic Problem with Terms Concentrating on the Boundary},
  journal = {Journal of Dynamics and Differential Equations},
  volume  = {26},
  pages   = {871--888},
  year    = {2014}
}

@article{PereiraPereira2007,
  author  = {Pereira, Ant{\^o}nio L. and Pereira, Marcone C.},
  title   = {Continuity of attractors for a reaction-diffusion problem with nonlinear boundary conditions with respect to variations of the domain},
  journal = {Journal of Differential Equations},
  volume  = {239},
  pages   = {343--370},
  year    = {2007}
}

@article{PereiraOliva2002,
  author  = {Pereira, Ant{\^o}nio L. and Oliva, S{\'e}rgio M.},
  title   = {Attractors for parabolic problems with nonlinear boundary conditions in fractional power spaces},
  journal = {Dynamics of Continuous, Discrete and Impulsive Systems},
  volume  = {9},
  number  = {4},
  pages   = {551--562},
  year    = {2002}
}

@article{CarvalhoOlivaPereiraRodriguez1997,
  author  = {Carvalho, Alexandre N. and Oliva, S{\'e}rgio M. and Pereira, Ant{\^o}nio L. and Rodr{\'\i}guez-Bernal, An{\'\i}bal},
  title   = {Attractors For Parabolic Problems With Nonlinear Boundary Conditions},
  journal = {Journal of Mathematical Analysis and Applications},
  volume  = {207},
  number  = {2},
  pages   = {409--461},
  year    = {1997}
}

@misc{Pereira_arxiv2024,
  author        = {Pereira, Ant{\^o}nio L.},
  title         = {Continuity of attractors for a highly oscillatory family of perturbations of the square},
  year          = {2024},
  eprint        = {2408.07204},
  archivePrefix = {arXiv},
  note          = {Preprint, \href{https://arxiv.org/abs/2408.07204}{arXiv:2408.07204}}
}

\end{document}